\let\over\@@over\makeatother
\numberwithin{equation}{section}
\theoremstyle{plain} 
\newtheorem{theorem}{Theorem}[section] 
\newtheorem{corollary}[theorem]{Corollary}
\newtheorem{lemma}[theorem]{Lemma} 
\theoremstyle{remark}
\newtheorem{remark}[theorem]{Remark}
\theoremstyle{definition}
\newtheorem{definition}[theorem]{Definition}
\newtheorem{example}[theorem]{Example}
\newcommand{\be}{\begin{equation}}
\newcommand{\ee}{\end{equation}}%
\newcommand{\bse}{\begin{subequations}}
\newcommand{\ese}{\end{subequations}}
\newcommand{\realpart}{\operatorname{Re}}
\newcommand{\imagpart}{\operatorname{Im}}
\newcommand{\range}{\operatorname{rng}}
\newcommand{\kernel}{\operatorname{ker}}
\newcommand{\cod}{\operatorname{codim}}
\newcommand{\id}{\operatorname{id}}
\newcommand{\R}{\mathbb{R}}
\newcommand{\C}{\mathbb{C}} 
\newcommand{\placeholder}{\,\cdot\,}
\newcommand{\abs}[2][]{#1\lvert #2 #1\rvert}
\newcommand{\loc}{{\mathrm{loc}} } 
\newcommand{\even}{{\mathrm{e}} }
\newcommand\A{\mathscr A}    % kinematic condition operator
\newcommand\B{\mathscr B}    % Bernoulli condition operator
\providecommand{\G}{}        % ignore this nonsense
\renewcommand\G{\mathscr G}  % boundary term
\newcommand\F{\mathscr F}    % the nonlinear problem
\newcommand\Vrho{\mathcal{V}}
\newcommand\Wspace{\mathscr W}
\newcommand\Xspace{\mathscr X}
\newcommand\Yspace{\mathscr Y}
\newcommand\Pspace{\mathcal P}
\newcommand{\cm}{{\mathscr C}}  
\newcommand{\km}{{\mathscr K}}            
\newcommand\fluidD{\mathscr{D}}
\newcommand{\confD}{\mathcal{D}}
\newcommand{\fullparam}{\Lambda}
\newcommand{\param}{\lambda}
\newcommand{\proj}{P}
\newcommand{\zc}{z_{\mathrm{c}}}
\newcommand{\per}{\mathrm{per}}
\newcommand{\cOmega}{\boldsymbol{\Omega}}
\newcommand{\cgamma}{\boldsymbol{\gamma}}     
\newcommand{\boundaryvelocity}{\mathfrak{v}}
\newcommand{\vortexp}{{p}}
\newcommand{\rot}{{\mathrm{rot}}}
\newcommand{\hstate}{{\mathrm{H}}}
\newcommand{\vstate}{{\mathrm{V}}}
\newcommand{\rotmat}{R}
\begin{document}

\title[Imploding hollow vortices]{Finite-time self-similar implosion of hollow vortices}

\date{\today}

\author[R. M. Chen]{Robin Ming Chen}
\address{Department of Mathematics, University of Pittsburgh, Pittsburgh, PA 15260} 
\email{mingchen@pitt.edu}  

\author[S. Walsh]{Samuel Walsh}
\address{Department of Mathematics, University of Missouri, Columbia, MO 65211} 
\email{walshsa@missouri.edu} 

\author[M. H. Wheeler]{Miles H. Wheeler}
\address{Department of Mathematical Sciences, University of Bath, Bath BA2 7AY, United Kingdom}
\email{mw2319@bath.ac.uk}

\begin{abstract}

In this paper, we consider the finite-time blowup of hollow vortices.  These are solutions of the two-dimensional Euler equations for which the fluid domain is the complement of finitely many Jordan curves $\Gamma_1, \ldots, \Gamma_M$, and such that the flow is irrotational and incompressible, but with a nonzero circulation around each boundary component.  The region bounded by $\Gamma_k$ is a ``vortex core'', modeled as a bubble of ideal gas: the pressure is constant in space and inversely proportional to the area of the vortex.  This can be thought of as the isobaric approximation assuming isothermal flow.

Our results come in two parts.  There exist explicit families of purely circular rotating and imploding hollow vortices. Implosion means more precisely that the vortex core shrinks to the origin in finite time, while the absolute value of the pressure simultaneously diverges to infinity. We prove that for any $m \geq 2$, there exist near-circular $m$-fold symmetric rotating hollow vortices.  By contrast, for all $m \geq 2$, the purely circular imploding vortices are locally unique among all collapsing vortices with uniform velocity at infinity.

The second part concerns configurations of multiple hollow vortices.  The existence of configurations of point vortices that collapse into a common point in finite time is classical.  We prove that generically, these can be desingularized to yield families of hollow vortex configurations exhibiting self-similar finite-time implosion.  Specific examples of an imploding trio and quartet of hollow vortices are given.  
\end{abstract}

\maketitle

\setcounter{tocdepth}{1}
\tableofcontents

\section{Introduction}
\label{introduction section}

Since their introduction by Helmholtz~\cite{helmholtz1858uber} in 1858, point vortices have served as an extremely useful idealized model for the evolution of regions of highly-concentrated vorticity in two dimensions.  Formally, the vorticity is represented by a sum of Dirac $\delta$-measures; the positions of the atoms, called the \emph{vortex centers}, are governed by a finite-dimensional Hamiltonian system derived from the two-dimensional incompressible Euler equations. It was discovered by Gröbli \cite{grobli1877specielle} in 1877 that point vortex dynamics admits remarkable collapsing solutions: there are configurations consisting of three initially distinct vortices that coalesce at a single point in finite time.  Their motion is self-similar, with each vortex center following a logarithmic spiral so that the relative configuration is preserved while shrinking in scale. Gröbli's collapsing vortex triples were rediscovered independently over a century later \cite{aref1979motion,novikov1979vortex,kimura1987similarity}; further studies have led to a complete characterization of the three-vortex system, while configurations consisting of many more vortices have also been investigated \cite{hernandez2007collisions,aref2010self,krishnamurthy2018finite,kallyadan2022selfsimilar}.

A fundamental question in vortex dynamics is whether the singularity formation observed in collapsing point vortices is realizable within the full two-dimensional incompressible Euler equations. One natural approach is to ``desingularize" point vortices into solutions where vorticity is concentrated around the trajectories of the vortex centers. The objective of this paper is to perform such a construction beginning with a (generic) collapsing point vortex configuration. However, in sharp contrast to the finite-time singularity formation for the point vortex system, classical solutions of the two-dimensional Euler equations are globally defined in time~\cite{holder1933unbeschrankte,lichtenstein1925einige,wolibner1933theoreme}, as are weak solutions with Lipschitz data due to a seminal work of Yudovich~\cite{yudovich1963non}.
Therefore vortex patches --- by far the most commonly studied variety of localized vorticity Eulerian flows --- unfortunately will not suffice. In this setting, the vorticity is compactly supported and the velocity field is locally Lipschitz, and hence the solutions are likewise defined globally in time. 
Even outside the Yudovich class, a classical result of Delort \cite{delort1991existence} ensures the global existence of weak Euler solutions for vortex sheet data with a distinguished sign. Thus, the challenge is to identify an intermediate vorticity model that is more regular than point vortices while below the threshold for global-in-time well-posedness. 

Self-similar solutions with singular trajectories provide one avenue for singularity formation in Euler flows. Elling \cite{elling2013algebraic,elling2016self} (see also \cite{shao2023self,shao2025self}) constructed algebraic spiral solutions, which emerge from an initial vorticity concentration at the origin and describe the roll-up of fluid into an infinite spiraling pattern. Logarithmic spirals provide another example, particularly in the context of vortex sheets, where finite-time singularity can occur due to the blowup of kinetic energy \cite{cieslak2024spiral} or the divergence of vortex sheet strength \cite{jeong2023logarithmic}.

In this paper, we investigate a different model of localized vorticity. \emph{Hollow vortices} are regions of spatially uniform pressure surrounded by a vortex sheet within an ideal fluid. They can also be viewed as classical regularity solutions of the free boundary Euler equations with boundary pressure forcing that is time dependent but constant on each boundary component. Alternatively, as we explain below, the vortex cores can be interpreted as a second compressible phase under the isobaric approximation. Hollow vortices have been studied since the 19th century~\cite{pocklington1894configuration}, and there has been a great deal of recent interest in steady hollow vortex configurations of various types~\cite{baker1976structure,crowdy2011analytical,llewellyn2012structure,crowdy2013translating,traizet2015hollow,zannetti2016hollow,chen2023desingularization}. In this work, we give the first construction of self-similarly collapsing hollow vortices. Unlike algebraic spirals, which exhibit singular vorticity transport along prescribed trajectories, or logarithmic spiral vortex sheets, where singularities develop due to energy or sheet-strength blowup, hollow vortex collapse is characterized by the shrinking of the core region to a point, leading to a pressure singularity (implosion). In that sense, the hollow vortex model offers a regularization of the collapsing point vortex configurations for which the finite-time breakdown can be understood through fluid mechanical principles. 

\subsection{Governing equations}
\label{intro governing equation section}

Let us now state the model more precisely. The fluid motion is taken to be governed by the incompressible Euler equations in the bulk
\begin{subequations}
\label{lab Euler}
\begin{equation}
  \label{lab momentum + incompressibility}
  \left\{
    \begin{aligned}
      \partial_t \mathbf{u} + (\mathbf{u} \cdot \nabla) \mathbf{u} & = -\nabla p \\
      \nabla \cdot \mathbf{u} & = 0 
    \end{aligned}
  \right. \qquad \textrm{in } \fluidD(t),
\end{equation}
where $\fluidD(t) \subset \mathbb{R}^2$ is the (time-dependent) fluid domain, $\mathbf{u} = \mathbf{u}(t,\placeholder) \colon \overline{\fluidD(t)} \to \mathbb{R}^2$ is the velocity field, and $p = p(t,\placeholder) \colon \overline{\fluidD(t)} \to \mathbb{R}$ is the pressure. In the hollow vortex setting, we assume that 
\[
	\fluidD(t)^c = \overline{\mathscr{V}_1(t) \cup \cdots \cup \mathscr{V}_M(t)} \equalscolon \overline{\mathscr{V}(t)},
\]
with $\mathscr{V}_1(t), \ldots, \mathscr{V}_M(t)$ disjoint regions --- the \emph{vortex cores} --- such that the boundary of $\mathscr{V}_k(t)$ is the Jordan curve $\Gamma_k(t)$. 

The kinematic condition states that the normal velocity $\boundaryvelocity_k = \boundaryvelocity_k(t,\placeholder) \colon \Gamma_k(t) \to \mathbb{R}^2$ of the $k$-th vortex boundary coincides with the normal velocity of the fluid:
\begin{equation}
  \label{lab kinematic}
  \mathbf{n} \cdot \mathbf{u} = \boundaryvelocity_k \qquad \textrm{on } \Gamma_k(t).
\end{equation}
The dynamic condition requires the pressure to be continuous over $\partial\fluidD(t)$.  For hollow vortices, each vortex core $\mathscr{V}_k(t)$ is taken to be a region of spatially constant pressure $\vortexp_k = \vortexp_k(t)$, and thus 
\begin{equation}
  \label{lab dynamic}
  p = \vortexp_k \qquad \textrm{on } \Gamma_k(t). 
\end{equation}
This condition on the pressure can be interpreted as an isobaric approximation, a common approximation in the study of gas bubbles where the internal pressure is assumed to be spatially uniform due to rapid sound-wave equilibration \cite{benjamin1987hamiltonian,prosperetti1991bubbles}. More broadly, \eqref{lab dynamic} can be viewed as a simplified model of bubble dynamics, similar to the framework studied in \cite{lai2024asymmetric}, where the gas pressure within a perturbed bubble remains uniform and evolves according to the ideal gas law. If we further assume an isothermal process, the product of pressure and volume remains constant. Consequently, in the setting of hollow vortices, the collapse of the vortex cores to a point in finite time forces their volumes to shrink to zero, driving the internal pressure to infinity in absolute value. This implosion process provides a natural mechanism for finite-time blowup of the system.

Finally, we require that the vorticity is concentrated on the vortex boundaries in the sense that the velocity field is irrotational in the interior of $\fluidD(t)$ but has a nonzero circulation $\gamma_k \in \mathbb{R}$ around the $k$-th vortex: 
\begin{equation}
\label{lab circulation}
  \begin{aligned}
    \nabla^\perp \cdot \mathbf{u} & = 0 &\qquad& \textrm{in } \fluidD(t) \\
    \int_{\Gamma_k(t)} \mathbf{t} \cdot \mathbf{u}  \, ds & = \gamma_k & \qquad & k = 1, \ldots, M, 
  \end{aligned}
\end{equation}
\end{subequations}
where $\mathbf{t}$ is the unit tangent vector and $\Gamma_k(t)$ is oriented counterclockwise.
Here, and elsewhere, we use a superscript $\perp$ to indicate the $90^\circ$ counterclockwise rotation of a two-dimensional vector field: $(x,y)^\perp \colonequals (-y,x)$ and similarly $\nabla^\perp = (\partial_x,\partial_y)^\perp = (-\partial_y,\partial_x)$.

Our objective in this paper is to construct solutions to~\eqref{lab Euler} that collapse self-similarly into the origin in finite time. With that in mind, we switch to a (non-inertial) reference frame with spatial variable $\boldsymbol \xi$ and time variable $T$ related to the coordinates $(\mathbf x, t)$ in the original reference frame by
\begin{equation}
  \label{definition T xi}
    t = \kappa(1-e^{-\frac T\kappa}),
  \qquad 
  \mathbf x = e^{-\frac T{2\kappa}}\rotmat(\Omega T) \boldsymbol \xi.
\end{equation}
Here, $\rotmat(\theta)$ denotes a counterclockwise rotation by the angle $\theta$ and $\kappa \in (0,+\infty]$ and $\Omega \in \R$ are constants. For $\kappa < \infty$ this coordinate system collapses as $t \nearrow \kappa$ (equivalently $T \to +\infty$), while for $\kappa = +\infty$ the first equation in \eqref{definition T xi} is interpreted as $T=t$ so that the new coordinates rotate at a constant angular velocity $\Omega$ without any contraction. 

One can check that the Eulerian velocity fields $\mathbf u = \mathbf u(\mathbf x,t)$ and $\mathbf U = \mathbf U(\boldsymbol \xi,T)$ in the two reference frames are related by
\begin{align}
  \label{relative velocity definition}
  \mathbf u(\mathbf x,t)
  = e^{-\frac T{2\kappa}}\rotmat(\Omega T) \mathbf U(\boldsymbol \xi,T)
  + e^{-\frac T\kappa}\bigg(\Omega {\mathbf x}^\perp - \frac 1{2\kappa} \mathbf x\bigg).
\end{align}
By this we mean that for trajectories $\mathbf x = \mathbf X(t)$ and $\boldsymbol \xi = \boldsymbol \Xi(T)$ related by \eqref{definition T xi}, the ordinary differential equation $\frac d{dt} \mathbf X = \mathbf u(\mathbf X,t)$ is equivalent to $\frac d{dT} \boldsymbol \Xi = \mathbf U(\boldsymbol \Xi,T)$. 
Inserting \eqref{relative velocity definition} into \eqref{lab momentum + incompressibility} we find
\begin{equation*}
  \label{spiraling Euler}
  \mathbf{U}_T + (\mathbf{U} \cdot \nabla) \mathbf{U} + 2\Omega \mathbf{U}^\perp - \left( \frac{1}{4\kappa^2} + \Omega^2 \right) \boldsymbol{\xi}    = -\nabla P  
\end{equation*}
for $\mathbf{x} \in \fluidD(t)$, where here $P$ is a rescaled pressure in the spiraling cordinates defined by
\begin{equation}
  \label{spiraling P}
  p(\mathbf x,t) = e^{-\frac T\kappa} P(\boldsymbol\xi,T).
\end{equation}

A \emph{self-similar} solution to the hollow vortex problem~\eqref{lab Euler} is one for which $\mathbf{U}$ and $P$ are independent of time $T$. The fluid domain then evolves via $\fluidD(t) = e^{-\frac T{2\kappa}} R(\Omega T)\fluidD(0)$. One can show that the \emph{steady problem} satisfied by $\mathbf U$ and $P$ is
\begin{subequations}
\label{steady hollow vortex problem}
\begin{equation}
  \label{steady hollow vortex problem interior}
  \left\{
    \begin{aligned} 
      \mathbf{U} \cdot \nabla  \mathbf{U} + 2\Omega \mathbf{U}^\perp - \left( \frac{1}{4\kappa^2} + \Omega^2 \right) \boldsymbol{\xi}   & = -\nabla P  \\
      \nabla \cdot  \mathbf{U}  & = \frac{1}{\kappa}  \\
      \nabla ^\perp \cdot  \mathbf{U}  & = -2\Omega  \\
    \end{aligned}
  \right.
  \qquad 
    \textrm{in } \fluidD
\end{equation}
together with the boundary conditions
\begin{equation}
  \label{steady hollow vortex problem boundary}
  \left\{
    \begin{aligned} 
      \int_{\Gamma_k} \mathbf{t} \cdot \mathbf{U}  \, ds & = \gamma_k + 2\Omega |\mathscr{V}_k| \\
      \mathbf{n} \cdot \mathbf{U} & = 0 & \qquad & \textrm{on } \Gamma_k \\
      P & = P_k & \qquad & \textrm{on } \Gamma_k
    \end{aligned}
  \right.
\end{equation}
\end{subequations}
for $k = 1, \ldots, M$, and where we are abbreviating $\fluidD \colonequals \fluidD(0)$, $\mathscr{V}_k \colonequals \mathscr{V}_k(0)$, and $\Gamma_k \colonequals \partial \mathscr{V}_k = \Gamma_k(0)$. Here, all functions and differential operators are in the $\boldsymbol\xi$ coordinates.  Each $P_k$ is constant (in time and space), so that the pressure of the $k$-th hollow vortex in the lab frame will be $p_k(t) = \exp(-T/\kappa)P_k$. This reflects the ideal gas law: the pressure in the core region is inversely proportional to the area $\abs{\mathscr{V}(t)}=\exp(-T/\kappa) \abs{\mathscr V(0)}$, diverging as $t \nearrow \kappa$.
Because it is set in a non-inertial frame, the momentum balance equation in~\eqref{steady hollow vortex problem interior} includes three fictitious forces: the familiar \emph{Coriolis force} in the direction  $\mathbf{U}^\perp$ and \emph{centrifugal force} in the radial direction $\boldsymbol{\xi}$ due to rotation, but also an \emph{Euler-type force} in the radial direction related to the acceleration of the rotation rate.  The additional (constant) contribution to the circulation in \eqref{steady hollow vortex problem boundary} arises when we take the line integral of the $\Omega \mathbf{x}^\perp$ term in~\eqref{relative velocity definition}. 

\subsection{Informal statement of results}
\label{intro results section}

We now explain the contributions of the present paper.  For the time being, this is done somewhat informally, as the precise versions are most conveniently expressed following a further reformulation of the problem discussed below.

Our first set of results concern the case of a single rotating or imploding hollow vortex centered at the origin.  For each $\gamma, \Omega \in \mathbb{R}$ and $\kappa \in (0,+\infty]$, there is a  solution of this type with the explicit form 
\begin{equation}
  \label{intro explicit circular solution}
  \mathscr{V} = B_1, \qquad \mathbf{U}_{\textrm{c}} = \mathbf{U}_{\textrm{c}}(\boldsymbol{\xi}; \, \gamma, \Omega, \kappa) \colonequals \frac{\gamma}{2\pi} \frac{\boldsymbol{\xi}^\perp}{|\boldsymbol{\xi}|^2} - \frac{1}{2\kappa} \frac{\boldsymbol{\xi}}{|\boldsymbol{\xi}|^2} + \frac{1}{2\kappa} \boldsymbol{\xi} - \Omega \boldsymbol{\xi}^\perp.
\end{equation}
As usual, in the case $\kappa = +\infty$, it is understood that all terms with coefficient $1/(2\kappa)$ vanish. Note that by a suitable rescaling, one can find such solutions with $\mathscr{V} = B_r$ for any $r > 0$. It is natural to then ask if there are nearby $m$-fold symmetric imploding and rotating or imploding hollow vortices.  Specifically, for each integer $m \geq 1$, a solution is said to be \emph{$m$-fold symmetric} provided the fluid domain $\fluidD$ and relative velocity $\mathbf{U}$ are invariant under the discrete symmetry group corresponding to rotations by $2\pi/m$ radians about the origin.

For the case of a purely rotating hollow vortex, there are in fact two families of near-circular $m$-fold solutions for any $m \geq 2$.

\begin{figure}
  \includegraphics[scale=1]{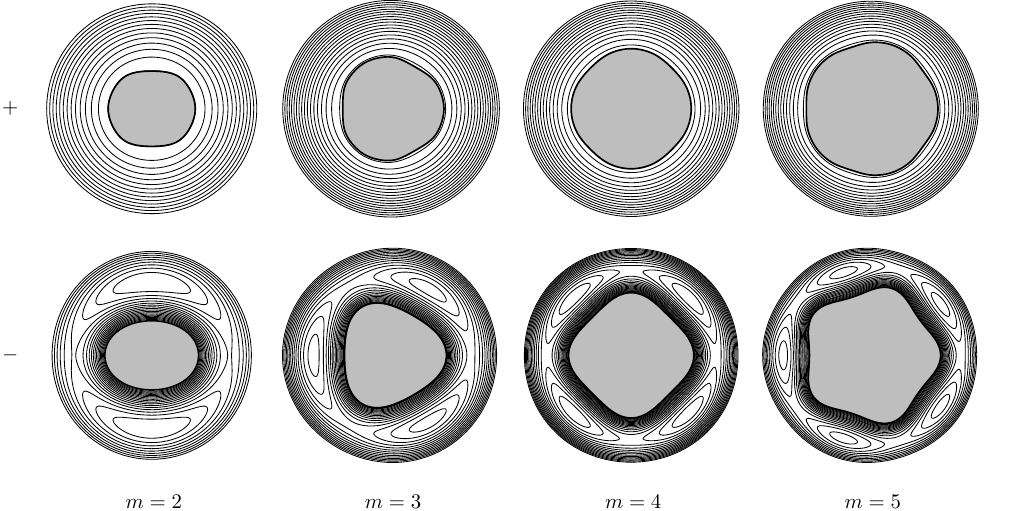}
  \caption{Sketched streamline patterns of rotating hollow vortices with $m$-fold symmetry for $m=2,3,4,5$; the first row correspond to solutions on $\cm_{\rot}^{m,+}$, while the second row are from $\cm_{\rot}^{m,-}$. The vortex is the shaded region, and the shape of the vortex and the streamlines are based on the leading-order approximation in \eqref{higher-order asymptotics}. Note that along the ``$+$'' branch, there are no critical layers, whereas on the ``$-$'' branch, one has the cat's eyes patterns typical of vortex patches and H-states.}
  \label{mfold figure}
\end{figure}

\begin{theorem}[Rotating hollow vortices]
\label{intro m-fold rotating theorem}
Fix a circulation $\gamma \neq 0$, integer $\ell \geq 0$, and $\alpha \in (0,1)$.  For any $m \geq 2$, take
\begin{equation}
  \label{definition Omega_0^pm}
  \Omega_{0,\pm} \colonequals \frac{\gamma}{2\pi} \left( 1  \pm \frac{1}{\sqrt{m}} \right).
\end{equation}
There exist real-analytic curves 
\[
	\cm_{\rot}^{m, \pm} = \left\{ (\mathbf{U}_\pm^\varepsilon, \, \mathscr{V}_\pm^\varepsilon, \Omega_\pm^\varepsilon ) : |\varepsilon| \ll 1 \right\}
\]
of $m$-fold symmetric rotating hollow vortex solutions bifurcating from the trivial solution~\eqref{intro explicit circular solution} in that 
\[
	\mathbf{U}_\pm^0 = \mathbf{U}_{\mathrm{c}}(\placeholder; \gamma, \Omega_{0,\pm},+\infty), \qquad \mathscr{V}_\pm^0 = B_1, \qquad \Omega_\pm^0 = \Omega_{0,\pm}.
\] 
  The vortex boundary $\Gamma_\pm^\varepsilon$ is the image under of the unit circle $\mathbb{T} \subset \mathbb{C}$ under a mapping $f_\pm^\varepsilon \in C^{\ell+1+\alpha}$ and is even about the $\xi$-axis. Moreover, we have the asymptotic expansions
\begin{equation}
  \label{R epsilon form}
  \begin{aligned}
    f_\pm^\varepsilon(e^{i\theta}) &= e^{i\theta}
    + \varepsilon e^{i(1-m)\theta}
    - \varepsilon^2 (\sqrt m \pm 1)^2 e^{i(1-2m)\theta}
    + O(\varepsilon^3)
     \quad \textrm{in } C^{\ell+1+\alpha},\\
     \Omega_\pm^\varepsilon &= 
     \Omega_{0,\pm}
\pm \varepsilon^2 \gamma\frac{(\sqrt m \pm 1)^3(\sqrt m \pm 3)}{4\pi\sqrt m} + O(\varepsilon^4).
  \end{aligned}
\end{equation}
\end{theorem}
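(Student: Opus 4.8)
The plan is to reduce \eqref{steady hollow vortex problem} (with $\kappa=+\infty$) to a scalar free-boundary problem for a single conformal map and then to bifurcate. First I would integrate the momentum balance. Writing $\mathbf U=\nabla^\perp\psi$, incompressibility and the vorticity equation in \eqref{steady hollow vortex problem interior} become $\Delta\psi=-2\Omega$ in $\fluidD$, while the identity $(\mathbf U\cdot\nabla)\mathbf U=\nabla(\tfrac12|\mathbf U|^2)+(\nabla^\perp\cdot\mathbf U)\,\mathbf U^\perp$ shows that the Coriolis term $2\Omega\mathbf U^\perp$ exactly cancels the convective contribution $-2\Omega\mathbf U^\perp$. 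Hence the momentum equation collapses to the Bernoulli law $\tfrac12|\mathbf U|^2-\tfrac12\Omega^2|\boldsymbol\xi|^2+P=\mathrm{const}$ throughout $\fluidD$. Using $\mathbf n\cdot\mathbf U=0$ and the constancy of $P$ on $\Gamma$, the problem is equivalent to finding the exterior domain $\fluidD$ of a single Jordan curve $\Gamma$ and a constant $\Omega$ such that $\psi$ is constant on $\Gamma$ (so $\Gamma$ is a streamline) and $\tfrac12|\mathbf U|^2-\tfrac12\Omega^2|\boldsymbol\xi|^2$ is constant on $\Gamma$, with the circulation and far-field data of the circular solution \eqref{intro explicit circular solution} prescribed.

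Next I would fix the domain by a conformal map. Represent $\fluidD=f(\{|z|>1\})$ with $f(z)=z+O(1/z)$ near infinity, restricted to maps that are $m$-fold symmetric and even about the $\xi$-axis and lie in $C^{\ell+1+\alpha}(\mathbb T)$; the relevant Fourier content is then carried by the modes $z^{1-m},z^{1-2m},\dots$. Decompose the velocity as $\mathbf U=-\Omega\boldsymbol\xi^\perp+\mathbf U_{\mathrm{irr}}$, where the solid-body part carries all the vorticity and has complex velocity $i\Omega\bar\zeta$, while $\mathbf U_{\mathrm{irr}}$ is irrotational with a holomorphic complex potential $W$. The streamline condition reads $\operatorname{Im} W=\tfrac12\Omega|f|^2+\mathrm{const}$ on $|z|=1$, a Schwarz problem whose solution (together with the prescribed circulation $\gamma$, which fixes the logarithmic part) expresses $W\circ f$ explicitly and boundedly in terms of $f$. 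Writing the boundary speed as $\big|(W\circ f)'(z)/f'(z)+i\Omega\,\overline{f(z)}\big|$ on $|z|=1$, the Bernoulli condition becomes a single real-analytic operator equation $\mathcal F(f,\Omega)=0$, namely that the nonconstant part of the boundary Bernoulli function vanishes. This $\mathcal F$ is a real-analytic Fredholm map of index zero between the symmetric Hölder spaces, with $\mathcal F(\mathrm{id},\Omega)\equiv0$ recovering the circular solutions.

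I would then linearize at $f=\mathrm{id}$. Testing against the single admissible kernel direction $z^{1-m}$ (equivalently $\cos m\theta$), the streamline condition pins the conjugate mode and the linearized Bernoulli condition reduces to the scalar dispersion relation
\[
  m\Big(\tfrac{\gamma}{2\pi}-\Omega\Big)^2=\Big(\tfrac{\gamma}{2\pi}\Big)^2,
\]
whose two roots are exactly $\Omega_{0,\pm}=\tfrac{\gamma}{2\pi}(1\pm 1/\sqrt m)$ from \eqref{definition Omega_0^pm}; these are simple and distinct because $\gamma\neq0$, and the transversality condition $\partial_\Omega\neq0$ holds at each. The restriction to $\xi$-axis-even maps is what makes the kernel one-dimensional (removing the rotational degeneracy), so the analytic Crandall--Rabinowitz local bifurcation theorem applies at each $(\mathrm{id},\Omega_{0,\pm})$ and produces the real-analytic curves $\cm_{\rot}^{m,\pm}$.

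Finally, to obtain \eqref{R epsilon form} I would run the Lyapunov--Schmidt expansion $f^\varepsilon=\mathrm{id}+\varepsilon f_1+\varepsilon^2 f_2+\cdots$, $\Omega^\varepsilon=\Omega_{0,\pm}+\varepsilon\Omega_1+\cdots$, with $\varepsilon$ normalized as the coefficient of $z^{1-m}$. The order-$\varepsilon$ equation gives $f_1=z^{1-m}$; the quadratic nonlinearity then feeds only the mode $z^{1-2m}$, whose coefficient works out to $-(\sqrt m\pm1)^2$, and solvability at order $\varepsilon^2$ fixes the correction to $\Omega$. That the odd-order corrections to $\Omega$ vanish (so $\Omega^\varepsilon=\Omega_{0,\pm}+O(\varepsilon^2)$) follows from a discrete symmetry: rotating the $m$-fold pattern by $\pi/m$ conjugates the $\varepsilon$-branch to the $(-\varepsilon)$-branch while preserving $\Omega$, so $\Omega^\varepsilon$ is even in $\varepsilon$. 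I expect the main obstacle to be the second step --- producing a clean, manifestly real-analytic formula for the boundary speed in terms of $f$, which requires correctly separating the rotational solid-body contribution and solving the Schwarz problem while respecting the circulation and far-field normalization. Once $\mathcal F$ is in hand the linearization, dispersion relation, and bifurcation are standard, and the second-order asymptotics, though lengthy, are routine.
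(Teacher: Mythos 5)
Your proposal is correct and lands on the same skeleton as the paper --- conformal parametrization of the exterior domain, the dispersion relation $m(\Omega-\tfrac{\gamma}{2\pi})^2=(\tfrac{\gamma}{2\pi})^2$ with roots $\Omega_{0,\pm}$, restriction to $\xi$-axis-even maps to obtain a simple eigenvalue, analytic Crandall--Rabinowitz, and a power-series expansion for the asymptotics --- but it differs at the level of formulation. You eliminate the complex potential up front by solving the streamline condition as a Schwarz problem, reducing everything to a single scalar equation $\mathcal F(f,\Omega)=0$ for the conformal map alone. The paper instead keeps the boundary traces of both $f-\id$ and $w-\tfrac{\cgamma}{2\pi i}\log(\placeholder)$ as unknowns, encoded by two real densities $(\mu,\nu)$ via Cauchy-type layer potentials, together with the Bernoulli constant $q$, and treats the kinematic and dynamic conditions as a coupled system; the potential is only eliminated by ``row reduction'' at the linearized level (Lemmas~\ref{null space lemma} and~\ref{range lemma}). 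Your scalar reduction is cleaner for this single-vortex problem and makes transversality literally $\partial_\Omega$ of the reduced dispersion function (which is $\mp 2\sqrt m\,\gamma/2\pi\neq 0$, consistent with the paper's $\mp 3\sqrt m\,\gamma/2\pi$ up to the normalization of the cokernel functional); the paper's system formulation buys uniformity with the multi-vortex desingularization of Section~\ref{multiple collapsing section}, where the layer-potential framework is what permits the conformal radius to degenerate. Two small points to tighten: (i) you should note that within the $m$-fold symmetric class only the mode $n=1$ can lie in the kernel at $\Omega_{0,\pm}$, which follows from strict monotonicity of $n\mapsto -mn(\Omega-\tfrac{\gamma}{2\pi})^2+(\tfrac{\gamma}{2\pi})^2$; and (ii) the explicit $\varepsilon^2$ coefficient of $\Omega_\pm^\varepsilon$ in \eqref{R epsilon form} is fixed by the solvability condition at order $\varepsilon^3$, not $\varepsilon^2$ (the order-$\varepsilon^2$ solvability only gives $\Omega_1=0$, which you already obtain from the rotation-by-$\pi/m$ symmetry), so the expansion must be carried one order further than you indicate, exactly as the paper does in Appendix~\ref{asymptotics appendix}.
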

The precise versions of this result is given in Theorems~\ref{m-fold rotating theorem}; formulas for the leading-order and form the of the relative velocity field are given in~\eqref{higher-order asymptotics}. Based on these, a sketch of the streamlines is provided in Figure~\ref{mfold figure}. 

Crowdy, Nelson, and Krishnamurthy~\cite{crowdy2021hstates} have previously studied rotating hollow vortices (that they call \emph{H-states}) of a related type and constructed families of exact solutions that limit to pinching. There is a subtle distinction, however, between their model and~\eqref{steady hollow vortex problem}. Integrating the steady Euler equations~\eqref{steady hollow vortex problem interior} gives Bernoulli's law in the self-similar variables
\begin{equation}
  \label{Bernoulli's law}
  \nabla \left( \frac{1}{2} |\mathbf{U}|^2 + P - \left( \frac{1}{4\kappa^2} + \Omega^2 \right) \frac{ |\boldsymbol{\xi}|^2}{2} \right) = 0 \qquad \textrm{in } \fluidD.
\end{equation}
Thus, for uniformly rotating solutions ($\kappa = \infty, \Omega \neq 0$), the total pressure $P_{\textrm{tot}} = P - \Omega^2 |\boldsymbol{\xi}|^2/2$ is a combination of $P$ and a centrifugal potential term. In~\cite{crowdy2021hstates}, the authors assume that $P_{\textrm{tot}}$ is constant on each boundary component, and hence the relative velocity field $\mathbf U$ likewise has constant magnitude there. The latter fact is quite useful for the complex-analytic methodology used in the paper, 
and this model has been used by other authors \cite{nelson2021corotating,chen2023desingularization}. However,
because we are interested in having as simple as possible a pressure condition in the lab frame, we instead ask in~\eqref{steady hollow vortex problem boundary} that the boundary values of $P$ (and hence $p$) are constant. Analytically, the result of this choice is that compared to~\cite{crowdy2021hstates} we have an additional inhomogeneous forcing term in the Bernoulli condition, which changes the dispersion relation and the qualitative form of the solutions. One immediately obvious difference, which can be seen in ~\eqref{definition Omega_0^pm}, is that there are two distinct  bifurcation curves at each $\gamma \neq 0$, whereas there is only one in the case of H-states.

We provide a comparison between the asymptotic form of our solutions and those of the H-states and rotating vortex patches (V-states) in Appendix~\ref{asymptotics appendix}. Qualitatively, the streamline pattern for the solutions along $\cm_{\rot}^{m,-}$ are similar to the H-states and V-states: both exhibit critical layers (where the angular fluid velocity $-\mathbf U \cdot \boldsymbol\xi^\perp/|\boldsymbol\xi|$ vanishes) and cat's eyes, though the radius at which the critical layer occurs is different in each case. Remarkably, though, the solutions on $\cm_{\rot}^{m,+}$ have no critical layers. The streamlines are instead \emph{monotone} in that they are globally polar graphs of functions that over each period decay monotonically from a local maximum (``crest'') to a local minimum (``trough'').  To the best of our knowledge, this structure is entirely different from any other previously constructed rotating vortex. Given the similarity between the hollow vortex problem and the steady gravity water wave problem, one might expect that $\cm_{\rot}^{m,+}$ can be globally continued up to a singular solution with an inward pointing $120^\circ$ corner at each trough. This is in contrast to the outward pointing $90^\circ$ corners observed in the case of V-states \cite{wu1984steady,overman1986steady,wang2024boundary}. It is less clear what one should expect along $\cm_{\rot}^{m,-}$, though it would be quite interesting to see if it exhibits the pinching observed in~\cite{crowdy2021hstates} for H-states.

When the collapse time is finite ($\kappa < \infty$), on the other hand, the purely circular imploding hollow vortices~\eqref{intro explicit circular solution} are actually locally unique.
For simplicity we state a version for the unit disk; analogous results for other disks can be obtained by a suitable rescaling.

\begin{theorem}[Rigidity] 
\label{intro rigidity corollary}
Fix $(\gamma^0, \Omega^0, \kappa^0)$ with $\kappa^0 < \infty$. There exists $\varepsilon > 0$ such that, if $\mathscr{V}$ is an $m$-fold symmetric hollow vortex with parameter values $(\gamma, \Omega, \kappa)$ and such that
\begin{enumerate}[label=\rm(\roman*\rm)]
	\item $|\mathscr{V}| = \pi$, 
	\item the relative velocity field vanishes at infinity, and
	\item $\partial \mathscr{V}$ is parameterized by $Z = Z(\theta) \in C_\per^{1+\alpha}([0,2\pi), \mathbb{C})$, with 
\[
	\| Z - e^{i\theta} \|_{C^{1+\alpha}} + |\gamma-\gamma^0| + |\kappa - \kappa^0| + |\Omega - \Omega^0| < \varepsilon,
\]
\end{enumerate}
then $\mathscr{V}$ is necessarily the unit disk and the velocity field is exactly $\mathbf{U}_{\textup{c}}(\gamma,\Omega,\kappa)$ as in~\eqref{intro explicit circular solution}.  
\end{theorem}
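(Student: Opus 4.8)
The plan is to exhibit the circular solution \eqref{intro explicit circular solution} as a nondegenerate zero of a nonlinear operator and then invoke the implicit function theorem. Working with $z = \xi_1 + i\xi_2$ and the complex velocity $w = U_1 - iU_2$, I would first note that the two scalar interior equations $\nabla\cdot\mathbf U = 1/\kappa$ and $\nabla^\perp\cdot\mathbf U = -2\Omega$ are together equivalent to $\partial_{\bar z} w = \tfrac{1}{2\kappa} + i\Omega$, i.e.
\[
  w(z) = \Big(\tfrac{1}{2\kappa} + i\Omega\Big)\bar z + F(z),
\]
with $F$ holomorphic on $\fluidD$, and that hypothesis (ii) is encoded as $F(\infty)=0$. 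The boundary conditions then read: the kinematic condition $\operatorname{Im}\big(w(Z)\,Z'\big)=0$ along $\Gamma$, the circulation normalization in \eqref{steady hollow vortex problem boundary}, and—from Bernoulli's law \eqref{Bernoulli's law} together with $P\equiv P_k$ on $\Gamma$—the relation $\tfrac{1}{2}|w|^2 - \tfrac{1}{2}A|z|^2 = \mathrm{const}$ on $\Gamma$, where $A := \tfrac{1}{4\kappa^2}+\Omega^2$. Parameterizing $\Gamma$ by a near-identity map and collecting these into a single map $\mathcal F(f;\gamma,\Omega,\kappa)=0$, the circle is a zero for every parameter value. The area normalization (i), the fixed circulation, the $m$-fold symmetry, and the evenness in (iii) quotient out the scaling, circulation, rotation and translation gauges, making $\mathcal F$ a Fredholm map of index zero between the relevant Hölder spaces; I would import this functional framework directly from the proof of Theorem~\ref{m-fold rotating theorem}.

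Everything then reduces to showing that the linearization $D_f\mathcal F$ at the circle is injective on $m$-fold symmetric perturbations when $\kappa<\infty$. Writing the perturbed boundary as $Z(\theta) = e^{i\theta}\big(1+\tau r(\theta)\big)$ with $r$ real and $F = c_0/z + \tau F_1$, $F_1 = \sum_{k\ge1}a_k z^{-k}$, I would linearize the kinematic and Bernoulli conditions around the base flow, whose boundary trace simplifies to $w_0 = i\mu\,e^{-i\theta}$ with $\mu := \Omega - \gamma/(2\pi)$. A direct computation produces the two real linearized conditions
\[
  \mu r' + \tfrac{1}{\kappa} r + \operatorname{Re}\big(e^{i\theta}F_1\big) = 0,
  \qquad
  (\mu\nu - A)\,r + \mu\operatorname{Im}\big(e^{i\theta}F_1\big) = \mathrm{const},
\]
with $\nu := \Omega + \gamma/(2\pi)$. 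Since $m$-fold symmetry forces $r$ into the modes $n\in m\mathbb Z$ and $F_1$ into the coefficients $a_{jm+1}$, projecting onto the $e^{in\theta}$ component ($n = jm$, $j\ge1$) decouples the system into independent $2\times 2$ blocks relating $\hat r_n$ and $\overline{a_{n+1}}$:
\[
  \begin{pmatrix} \tfrac{1}{\kappa} + i\mu n & \tfrac{1}{2} \\[2pt] \mu\nu - A & \tfrac{i\mu}{2}\end{pmatrix}
  \begin{pmatrix}\hat r_n \\ \overline{a_{n+1}}\end{pmatrix} = 0 .
\]

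The crux is then the elementary observation that the determinant of this block,
\[
  \tfrac{1}{2}\Big(\tfrac{i\mu}{\kappa} - \mu^2 n - \mu\nu + A\Big),
\]
cannot vanish when $\kappa<\infty$: its imaginary part $\mu/(2\kappa)$ vanishes only if $\mu=0$, and then its real part reduces to $\tfrac{1}{2}A = \tfrac{1}{2}\big(\tfrac{1}{4\kappa^2}+\Omega^2\big)>0$. Hence the kernel is trivial in every relevant mode, while the low modes ($n=0$ and the $z^{-1}$ coefficient) are pinned by the area and circulation constraints. Injectivity together with the index-zero property upgrades $D_f\mathcal F$ to an isomorphism, and the implicit function theorem yields a unique solution near the circle for each nearby $(\gamma,\Omega,\kappa)$, which must be the circle itself. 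As a consistency check, setting $\kappa=\infty$ makes the determinant real and its vanishing becomes exactly the dispersion relation $\mu^2 n + \mu\nu - \Omega^2 = 0$, whose roots at $n=m$ are precisely the bifurcation values $\Omega_{0,\pm}$; thus the finite-$\kappa$ rigidity and the $\kappa=\infty$ bifurcation of Theorem~\ref{intro m-fold rotating theorem} are two faces of the same dispersion relation, with finiteness of $\kappa$ promoting a single real resonance condition to a pair of real equations that cannot hold simultaneously.

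I expect the genuinely delicate part to be not this determinant computation but the functional-analytic bookkeeping: confirming that an arbitrary $C^{1+\alpha}$ curve $Z$ as in (iii) can be brought to the normalized near-identity form without leaving a small neighborhood, verifying that the gauge-fixing by (i), the circulation, the $m$-fold symmetry and the evenness exactly accounts for the finite-dimensional cokernel so that $\mathcal F$ is a square map, and checking the uniform invertibility of the $2\times2$ blocks (whose determinants grow like $\mu^2 n$, so the inverses are controlled) in the chosen Hölder topology. These are routine, however, given the machinery already developed for the rotating case.
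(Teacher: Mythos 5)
Your proposal follows essentially the same route as the paper: linearize at the circular solution, decompose into Fourier modes, and observe that for $\kappa<\infty$ the mode-$n$ determinant (the paper's dispersion relation $d_m(n,\cOmega)$ in Lemma~\ref{null space lemma}) acquires an imaginary part proportional to $\mu/\kappa$ that vanishes only when $\Omega=\gamma/2\pi$, in which case the real part is $\Omega^2+1/4\kappa^2>0$; triviality of the kernel plus Fredholmness of index zero and the implicit function theorem then give local uniqueness. Your determinant computation and the $\kappa=\infty$ consistency check against $\Omega_{0,\pm}$ agree with the paper's Lemmas~\ref{null space lemma} and~\ref{range lemma}. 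Two remarks, one substantive and one minor. The substantive one: the step you defer as ``routine functional-analytic bookkeeping'' --- passing from the hypothesis $\|Z-e^{i\theta}\|_{C^{1+\alpha}}<\varepsilon$ on an \emph{arbitrary} boundary parameterization to smallness of the unknowns in the conformal/layer-potential formulation --- is where the paper does genuine work. Closeness of a Jordan curve to the circle in $C^{1+\alpha}$ does not trivially yield a quantitative $C^{1+\alpha}$ bound on the exterior Riemann map; the paper proves this in Lemma~\ref{near identity lemma} using Marchenko--Gaier estimates on conformal maps of nearly circular domains followed by a Schauder estimate for an oblique derivative problem, and Lemma~\ref{uniqueness lemma} is needed to show the ansatz~\eqref{w and f ansatz} captures every such vortex after rescaling. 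Without some version of these lemmas your argument only rules out nontrivial solutions that are already presented in the normalized conformal variables, which is weaker than the stated theorem.

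The minor point: condition (iii) contains no evenness hypothesis, and the paper explicitly notes that the collapsing equations do \emph{not} respect the reflection symmetry~\eqref{even symmetry}, so evenness cannot be used as a gauge-fixing here. Fortunately this does not damage your argument --- your $2\times2$ blocks are invertible on the full (complex) mode space, so the kernel is trivial without any symmetry reduction, exactly as in the paper, where the restriction to $\Xspace_\even$ is used only for the rotating bifurcation in Theorem~\ref{m-fold rotating theorem}.
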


Our second set of results concern the existence of configurations of $M \geq 3$ hollow vortices that implode as they spiral self-similarly into the origin in finite time.  These are constructed through a vortex desinguarlization approach, starting from a collection of $M$ collapsing point vortices.  Recall that point vortex dynamics are a weakened version of the Euler system in which the vorticity is formally the sum of finitely many Dirac $\delta$ masses,
\begin{equation}
  \label{definition point vortex}
  \nabla^\perp \cdot \mathbf{u} = \sum_{k=1}^M \gamma_k \delta_{\boldsymbol{\xi}_k},
\end{equation}
where $\boldsymbol{\xi}_k \in \mathbb{R}^2$ is the center of the $k$-th vortex, and $\gamma_k \in \mathbb{R}$ is its circulation or vortex strength.  The well-known Kirchhoff--Helmholtz ODE governs the time evolution of the vortex centers.  Point vortex dynamics is a classical subject with a voluminous applied and pure literature; see, for example, \cite{saffman1992book,newton2001nvortex} and the references therein. While \eqref{definition point vortex} is not regular enough to constitute a weak solution to the Euler equations, the Kirchhoff--Helmholtz model can be obtained as the effective equation in the limit where the support of the vorticity shrinks to $\boldsymbol{\xi}_1,\ldots,\boldsymbol{\xi}_M$ in an appropriate sense.  The desingularization result we give in the present paper provides yet another justification of this type.

The Kirchhoff--Helmholtz model for a collapsing configuration reduces to a system of $2M$ (real) algebraic equations; written abstractly, they take the form $\mathcal{V}(\fullparam) = 0$, where the parameters are
\[
	\fullparam \colonequals (\boldsymbol{\xi}_1, \ldots, \boldsymbol{\xi}_M, \gamma_1, \ldots, \gamma_M, \kappa, \Omega) \in \mathbb{R}^{2M} \times \mathbb{R}^M \times \mathbb{R} \times \mathbb{R} \equalscolon \mathbb{P}.
\]
Briefly, this is derived analogously to the self-similar Euler equations, by asking that the vortex centers appear stationary in a rotating and contracting reference frame; see the discussion in Section~\ref{self-similar point vortex section}. We say that a point vortex configuration $\fullparam_0$ is \emph{non-degenerate} if the Jacobian of $\mathcal{V}$ has full rank at $\fullparam_0$.  Equivalently, there exists a decomposition of the parameter space $\mathbb{P} \cong \mathcal{P} \times \mathcal{P}^\prime$, for some $2M$-dimensional subspace $\mathcal{P}$ so that, writing $\fullparam = (\param,\param^\prime) \in \mathcal{P} \times \mathcal{P}^\prime$, we have that $D_\lambda\mathcal{V}(\fullparam_0)$ is an isomorphism; see Definition~\ref{definition non-degeneracy} and the surrounding discussion. 

\begin{figure}
  \includegraphics[scale=1]{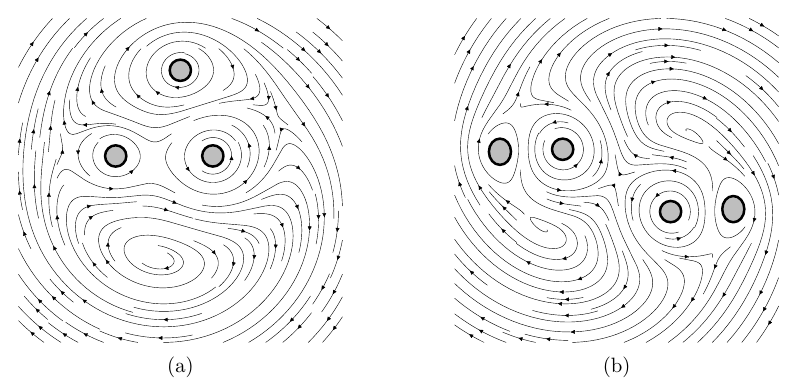}
  \caption{Sketched fluid particle trajectories, in the spiraling $(\boldsymbol\xi,T)$ frame \eqref{definition T xi}, for the collapsing hollow vortex trio (a) and quartet (b) in Corollary~\ref{triple quadruple corollary}. The velocity field $\mathbf{U}$, which has constant divergence $1/\kappa > 0$, is based on the corresponding point vortex configurations \eqref{point vortex trio} and \eqref{point vortex quartet}.}
  \label{collapse figure}
\end{figure}

Our main theorem states that any non-degenerate configuration of $M$ collapsing point vortices can be desingularized to create a family of $M$ self-similarly imploding hollow vortices.  

\begin{theorem}[$M$ imploding hollow vortices]
\label{intro configurations theorem}
Let $\fullparam_0 = (\param_0,\param_0^\prime)$ be a non-degenerate configuration of $M$ collapsing point vortices. There exists $\rho_1 > 0$, and a family $\km$ of imploding hollow vortex configurations that admits the real-analytic parameterization
\[
	\km = \left\{ \left( \mathbf{U}^\rho, \mathscr{V}^\rho, \lambda^\rho \right)   : \rho \in [0,\rho_1) \right\}
\]
and bifurcates from the point vortex configuration in the sense that
\[
	\mathbf{U}^0 = \sum_{k=1}^M \frac{\gamma_k}{2\pi } \frac{(\boldsymbol{\xi}-\boldsymbol{\xi}_k)^\perp}{|\boldsymbol{\xi}-\boldsymbol{\xi}_k|^2} + \frac{1}{2\kappa} \boldsymbol{\xi} - \Omega \boldsymbol{\xi}^\perp, \qquad \mathscr{V}^0 = \{ \boldsymbol{\xi}_1, \ldots, \boldsymbol{\xi}_M\}, \qquad \lambda^0 = \lambda_0.
\]
\end{theorem}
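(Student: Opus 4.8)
The plan is to obtain the family $\km$ by a desingularization argument based on the analytic implicit function theorem, treating the common core scale $\rho$ as the bifurcation parameter and the collapsing point-vortex configuration as the degenerate $\rho = 0$ state. The guiding principle is that a single small hollow vortex of scale $\rho$ centered near $\boldsymbol\xi_k$ is, to leading order, a rescaled copy of the explicit circular solution \eqref{intro explicit circular solution}, so that a configuration of $M$ such cores solves \eqref{steady hollow vortex problem} precisely when the collapse equations $\mathcal V(\fullparam) = 0$ hold and each core's shape is corrected to balance the strain exerted by its neighbors.

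First I would reformulate the steady problem \eqref{steady hollow vortex problem} as a free-boundary problem for the $M$ Jordan curves alone. Writing $\mathbf U = \mathbf V + \tfrac{1}{2\kappa}\boldsymbol\xi - \Omega\boldsymbol\xi^\perp$, the field $\mathbf V$ is divergence- and curl-free on $\fluidD$, decays at infinity, and carries a prescribed circulation around each $\Gamma_k$ (determined by $\gamma_k$, $\Omega$, and $\abs{\mathscr V_k}$); hence $\mathbf V$ is recovered from the domain and parameters by an exterior complex-velocity representation. The no-penetration condition makes each $\Gamma_k$ a streamline, and by Bernoulli's law \eqref{Bernoulli's law} the constant-pressure condition becomes the requirement that $\tfrac12\abs{\mathbf U}^2 - (\tfrac{1}{4\kappa^2}+\Omega^2)\tfrac{\abs{\boldsymbol\xi}^2}{2}$ be constant on each $\Gamma_k$. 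I would parameterize the $k$-th core as $\boldsymbol\xi_k + \rho\,\phi_k(\mathbb T)$ with $\phi_k$ a near-identity conformal map of the unit disk, so the cores are $O(\rho)$ deformations of disks of radius $\rho$, collect the shapes into $\mathbf w = (\phi_1,\ldots,\phi_M)$ in a product of Hölder spaces, and encode the dynamic conditions as a single nonlinear operator equation $\mathcal F(\mathbf w,\param,\rho) = 0$.

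The crux is the singular limit $\rho\to 0$. Rescaling near $\boldsymbol\xi_k$ by $\boldsymbol\xi = \boldsymbol\xi_k + \rho\boldsymbol\eta$, the self-circulation dominates as $\tfrac1\rho\tfrac{\gamma_k}{2\pi}\tfrac{\boldsymbol\eta^\perp}{\abs{\boldsymbol\eta}^2}$, while the field from the other cores and the affine part contributes a regular background $\mathbf W_k + \rho(D\mathbf B)\boldsymbol\eta + O(\rho^2)$, where $\mathbf W_k$ is exactly the regularized velocity at $\boldsymbol\xi_k$. Expanding the modified Bernoulli quantity on a near-circle, the $O(\rho^{-2})$ part is automatically constant (the self-speed is constant on circles); the $O(\rho^{-1})$ part is proportional to $\boldsymbol\eta^\perp\cdot\mathbf W_k$ and is constant precisely when $\mathbf W_k = 0$, i.e. when $\mathcal V(\fullparam) = 0$; and the $O(1)$ part couples the strain $D\mathbf B$, acting through the self-field, to the mode-$2$ deformation of the core. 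After dividing out the appropriate powers of $\rho$, the operator extends real-analytically to $\rho = 0$, with $\mathcal F(\mathbf 0,\param_0,0) = 0$ encoding simultaneously that each core is a disk and that $\fullparam_0$ is a collapsing configuration.

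It remains to show $D_{(\mathbf w,\param)}\mathcal F(\mathbf 0,\param_0,0)$ is an isomorphism, where I expect the main difficulty. To leading order the linearization decouples into: (a) an infinite-dimensional per-core block, namely the linearized dynamic condition on a circle, which is a Fourier multiplier whose symbol is the dispersion relation already governing Theorems~\ref{intro m-fold rotating theorem} and~\ref{intro rigidity corollary}; for $\kappa < \infty$ its invertibility on the shape modes $\abs n \geq 2$ is exactly the rigidity mechanism behind Theorem~\ref{intro rigidity corollary}, and the modes $n = 0,\pm1$ (area and rigid translation) are deliberately excluded from the shape unknowns; and (b) a finite-dimensional block for the $n = 0,\pm1$ balance, governed by the point-vortex Jacobian $D_\param\mathcal V(\fullparam_0)$, which is invertible precisely by the non-degeneracy hypothesis. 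The real work lies in showing that the inter-core coupling—each core's shape being forced by the strain of its deformed neighbors—is a lower-order, compact perturbation so that block invertibility of the diagonal survives, and in verifying that the exterior Biot--Savart representation, the conformal parameterization, and the $\rho$-rescaling are jointly real-analytic down to $\rho = 0$. Granting this, the analytic implicit function theorem produces a unique real-analytic branch $\rho \mapsto (\mathbf w^\rho,\param^\rho)$ with $(\mathbf w^0,\param^0) = (\mathbf 0,\param_0)$, which unwinds to the asserted family $\km$ bifurcating from the point-vortex configuration.
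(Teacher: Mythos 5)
Your proposal follows essentially the same route as the paper: each core is parameterized conformally as a small perturbation of a disk of radius $\rho$ (the paper realizes this with a single global map $f = \id + \rho^2\mathcal{Z}^\rho\mu$ from a circular domain, with layer-potential densities on $\mathbb{T}$ as the unknowns), the Bernoulli condition is rescaled in $\rho$ so that the leading nontrivial balance recovers $\mathcal{V}(\fullparam)=0$, and the linearization at $\rho=0$ splits into invertible per-core Fourier-multiplier blocks plus a finite-dimensional block governed by $D_\param\mathcal{V}(\fullparam_0)$, after which the analytic implicit function theorem yields the branch. One small correction: at $\rho=0$ the $\cOmega$-dependent terms drop out of the per-core blocks entirely (see \eqref{DA DB formulas}), so their invertibility on the modes $n\ge 2$ is automatic and does not rely on the collapsing dispersion relation of Lemma~\ref{null space lemma} or on $\kappa<\infty$ as you suggest.
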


To our knowledge, this is the first rigorous construction of collapsing hollow vortices. Since the system is time-reversible, one also has the existence of expanding (and growing) hollow vortex configurations. We note that expanding arrangements of non-uniform vortex patches have recently been found by Dávila, del Pino, Musso, and Parmeshwar \cite{davila2024expanding}. In that paper the patches follow the trajectories of a self-similarly expanding point vortex configuration, but the full solution is not self-similar.
See Theorem~\ref{imploding hollow vortex configuration theorem} for 
the precise version of Theorem~\ref{intro configurations theorem} as well as leading-order asymptotics. The nondegeneracy assumption is standard for vortex desingularization results, and it can in principle be verified for any explicit configuration through elementary means.  In Corollary~\ref{triple quadruple corollary}, we provide two concrete examples, an imploding hollow vortex triple and quadruple, illustrated in Figure~\ref{collapse figure}.

\subsection{Plan of the article}
\label{intro plan section}

The steady fluid domain $\fluidD$ is unknown a priori, and so to facilitate the analysis we make a further change of variables in Section~\ref{preliminaries section}, recasting the problem on a fixed domain.   This comes at the usual expense of complicating the equations.  In particular, we take $\fluidD$ (complexified in the natural way) to be the image of a conformal mapping $f$ defined on the ``circular domain''
\[
	\confD_\rho \colonequals \mathbb{C} \setminus \overline{B_{|\rho|}(\zeta_1) \cup \cdots \cup B_{|\rho|}(\zeta_M)},
\]
for $\zeta_1, \ldots, \zeta_M \in \mathbb{C}$ and $0 < \abs \rho \ll 1$, so that $\Gamma_k = f(\partial B_\rho(\zeta_k))$.  The velocity field is then described via a complex potential $w$, which will be holomorphic but multi-valued on $\confD_\rho$.  Once their behavior at infinity is specified, $f$ and $w$ are determined by their traces on the $M$ components of the boundary.  Using layer-potential operators, they can be uniquely described in terms of real-valued densities $\mu_1, \ldots, \mu_M$ and $\nu_1, \ldots, \nu_M$, respectively, which become new unknowns.  Some important properties of layer-potential operators are reviewed in Section~\ref{layer-potential section}.  

Section~\ref{m-fold section} is devoted to the study of $m$-fold symmetric imploding and rotating hollow vortices.  As mentioned above, there is an explicit family of ``trivial solutions'' of this type~\eqref{intro explicit circular solution} consisting of a single ($M=1$) purely circular hollow vortex~\eqref{trivial solution}.  Through an explicit computation we show that the linearized operator at any of these circular vortices is Fredholm index $0$, and we obtain a dispersion relation~\eqref{definition dispersion relation} characterizing the parameter values at which the kernel is nontrivial. For any $m \geq 2$, the linearized operator in the imploding case is in fact always an isomorphism, which more-or-less gives the rigidity claimed in Theorem~\ref{intro rigidity corollary}.  For purely rotating vortices, there is a discrete set of parameter values at which the kernel is two dimensional. As the system enjoys reflection symmetries over the real axis, by working in the appropriate function spaces, we can then arrange for the kernel to be one dimensional so that the existence of near-circular $m$-fold symmetric follows from the classical Crandall--Rabinowitz theorem.  

In Section~\ref{multiple collapsing section}, we turn to the desingularization result Theorem~\ref{intro configurations theorem}.  The strategy is to once again reformulate the system using layer-potential operators, which leads to a nonlocal and nonlinear problem that can be written as the abstract operator equation $\F(u, \rho) = 0$.  The unknown $u$ bundles the densities $\mu_1,\ldots,\mu_M$ and $\nu_1, \ldots, \nu_M$, together with some of the parameters $\lambda$ describing the point vortex system. With some care, we show that this formulation captures both the point vortex system (when $\rho=0$) and the hollow vortex system (when $|\rho| > 0$). The main insight is that the linearization of the hollow vortex problem at a point vortex configuration is Fredholm, and the dimension of its kernel and cokernel match those of the linearized collapsing point vortex system.  At non-degenerate point vortex configurations, therefore, one may apply the implicit function theorem to infer the existence of nearby imploding hollow vortex configurations.  We then look at the two explicit examples in Figure~\ref{collapse figure}.

\section{Preliminaries}
\label{preliminaries section}

\subsection{Notation}
\label{notation section}

For complex variables $z\colonequals x+iy$ we will often make use of the Wirtinger derivative operator
\[
    \partial_z \colonequals \tfrac{1}{2} \left(  \partial_x - i \partial_y \right),
\]
which may also be denoted as $f_z = \partial_z f$ when there is no risk of confusion.
Primes are reserved for complex (total) derivatives of functions with domain $\mathbb{T}$, the unit circle in the complex plane; if $f$ is a function of $\tau = e^{i\theta} \in \mathbb{T}$, then $f' = \partial_\tau f = -ie^{-i\theta} df/d\theta$.  In particular, we caution that $\partial_\tau$ does not commute with $\realpart{(\placeholder)}$ or $\imagpart{(\placeholder)}$.

Let $D$ be a bounded subset of $\R^n$ or $\C^n$. For each integer $\ell \geq 0$ and $\alpha \in (0,1)$, we denote by $C^{\ell+\alpha}(D)$ the usual space of real-valued Hölder continuous functions of order $\ell$, exponent $\alpha$, and having domain $D$, with the usual norm.
Every $\varphi \in C^{\ell+\alpha}(\mathbb{T})$ admits a unique power series representation
\[
    \varphi(\tau) = \sum_{m \in \mathbb{Z}} \widehat{\varphi}_m \tau^m, \qquad \text{with}\qquad \widehat{\varphi}_m \colonequals \frac{1}{2\pi} \int_{\mathbb{T}} \varphi(\tau) \tau^{-m} \,d\theta.
\]
Note that because these are real-valued functions, the coefficients must necessarily obey $\widehat{\varphi}_{-m} = \overline{\widehat{\varphi}_m}$. For $m \geq 0$, let $\proj_m \colon C^{\ell+\alpha}(\mathbb{T}) \to C^{\ell+\alpha}(\mathbb{T})$ denote the projection
\[
    (\proj_m\varphi)(\tau) \colonequals
    \begin{cases}
        \widehat{\varphi}_m \tau^m + \widehat{\varphi}_{-m} \tau^{-m} & \text{if } m \neq 0 \\
        \widehat{\varphi}_0                                      & \text{if } m = 0,
    \end{cases}
\]
and set $\proj_{\leq m} \colonequals \proj_0 + \cdots + \proj_m$ and $\proj_{> m} \colonequals 1-\proj_{\leq m}$. We will often work with the space $\mathring{C}^{\ell+\alpha}(\mathbb{T}) \colonequals \proj_{> 0} C^{\ell+\alpha}(\mathbb{T})$ of mean $0$ elements of $C^{\ell+\alpha}(\mathbb{T})$.

\subsection{Complex-variable formulation}
\label{complex variable formulation section}

\subsubsection*{Bernoulli's law and velocity potentials}

Thanks to Bernoulli's law~\eqref{Bernoulli's law}, the dynamic boundary condition $P=P_k$ on $\Gamma_k$ in \eqref{steady hollow vortex problem boundary} can be rewritten as
\begin{equation}
  \label{steady dynamic condition}
   \frac{1}{2} |\mathbf{U}|^2 - \left( \frac{1}{4\kappa^2} + \Omega^2 \right) \frac{ |\boldsymbol{\xi}|^2}{2} 
= q_k \qquad \textrm{on } \Gamma_k,
\end{equation}
for some Bernoulli constants $q_1, \ldots, q_M \in \mathbb{R}$. Also notice that, by \eqref{steady hollow vortex problem interior}, the vector field
\begin{equation}
  \label{definition vector field W}
  \mathbf{U} -\frac{1}{2\kappa} \boldsymbol{\xi}+ \Omega \boldsymbol{\xi}^\perp
\end{equation}
is both curl free and divergence free. Thus there exist (possibly multivalued) potentials $\Psi,\Phi$ with 
\begin{equation}
  \mathbf{U} -\frac{1}{2\kappa} \boldsymbol{\xi}+ \Omega \boldsymbol{\xi}^\perp = \nabla\Phi = \nabla^\perp \Psi,
\end{equation}
which can be combined into a holomorphic potential
\begin{equation*}
  W \colonequals \Phi + i\Psi.
\end{equation*}

\subsubsection*{Conformal variables}
As discussed in Section~\ref{introduction section}, our approach will be to fix the steady fluid domain $\fluidD$ by viewing it as the image of a canonical domain $\confD_\rho$ under an a priori unknown conformal mapping $f$.  Specifically, for $M \geq 1$ vortices and conformal radius $\rho \in \mathbb{R}$, we take $\confD_\rho$ to be the circular domain 
\[
	\confD_\rho = \confD_\rho(\zeta_1, \ldots, \zeta_M)  \colonequals \mathbb{C} \setminus \overline{ B_{|\rho|}(\zeta_1) \cup \cdots \cup B_{|\rho|}(\zeta_M)}.
\]
Here $\zeta_1, \ldots, \zeta_M \in \mathbb{C}$ are distinct and the closures of the disks are disjoint.  In the vortex desingularization argument leading to Theorem~\ref{intro configurations theorem}, they limit to the vortex centers as $\rho \to 0$; for the $m$-fold symmetric rotating and imploding vortices, we simply take $\zeta_1 = 0$ and $\rho =1$.  The fluid domain $\fluidD$ is then defined as $f(\confD_\rho)$, and $\Gamma_k = f(\partial B_\rho(\zeta_k))$ is the the boundary of the $k$-th vortex.  Note that for this representation to be valid, we must have that $f$ is conformal on $\confD_\rho$, with a $C^1$ extension to $\overline{\confD_\rho}$ that is injective.  As a further normalization, we require that $f - \id$ vanishes at infinity. By convention, we think of $\confD_\rho$ as lying in the complex $\zeta$-plane, while the physical domain $\fluidD$ lies in the $z$-plane.   

Defining $w = W \circ f$ on $\confD_\rho$, which like $W$ will be holomorphic but multi-valued,
the relative fluid velocity $\mathbf U \equalscolon (U,V)$ can be expressed as
\begin{equation}
\label{definition potential w}
  U-iV = \frac{w_\zeta}{f_\zeta} + i\cOmega \bar f,
\end{equation}
where we have introduced the shorthand 
\begin{align}
  \label{cOmega definition}
  i\cOmega \colonequals i\Omega + \frac 1{2\kappa}.
\end{align}
Conversely, any velocity field $\mathbf U$ of this form automatically solves \eqref{steady hollow vortex problem interior} with the pressure $P$ recovered (up to an additive constant) from \eqref{Bernoulli's law}. The circulation and kinematic conditions in~\eqref{steady hollow vortex problem boundary} are now equivalent to
\begin{subequations}
\label{conformal governing equations}
\begin{align}
  \label{circulation condition}
  \realpart \int_{\partial B_\rho(\zeta_k)} w_\zeta \, d\zeta &= \gamma_k\\
  \intertext{and}
  \label{kinematic condition}
  \realpart{\left( \zeta f_\zeta \left( \frac{w_\zeta}{f_\zeta} + i\cOmega \bar f \right) \right)} &= 0 \qquad \textrm{on } \partial \confD_\rho,
\end{align}
respectively, while the dynamic condition~\eqref{steady dynamic condition} takes the form
\begin{equation}
  \label{dynamic condition}
  \frac{1}{2} \left| \frac{w_\zeta}{f_\zeta} + i\cOmega \bar f \right|^2 -\frac{1}{2}  |\cOmega|^2  |f|^2 = q_k \qquad \textrm{on } \partial \confD_\rho.
\end{equation}
\end{subequations}

\subsection{Layer-potential operators}
\label{layer-potential section}

As the potential $w$ and conformal mapping $f$ are holomorphic, it suffices to determine their boundary traces and the circulations $\gamma_1, \ldots, \gamma_M$.  When we carry out the vortex desingularization argument in Section~\ref{multiple collapsing section}, however, it will be necessary to have a framework that permits sending the conformal radius $\rho$ to $0$.  We therefore work with unknowns having domain $\mathbb{T}^M$ rather than $\partial\confD_\rho$.   The approach taken here follows very closely that of \cite{chen2023desingularization}, where translating, rotating, and stationary hollow vortex configurations were considered.

With that in mind, for $(\rho, \zeta_1, \ldots, \zeta_M)$ and real-valued densities $ \mu = (\mu_1, \ldots, \mu_M) \in \mathring{C}^{\ell+\alpha}(\mathbb{T})^M$, we define the layer-potential operator $\mathcal{Z}^\rho(\zeta_1, \ldots, \zeta_M)$ by
\begin{equation}
  \label{definition Z operator}
   \mathcal{Z}^\rho(\zeta_1, \ldots, \zeta_M)[\mu](\zeta) \colonequals  \frac{1}{2 \pi i} \sum_{k=1}^M  \int_{\mathbb{T}} \frac{\mu_k(\sigma)}{\rho \sigma + \zeta_k - \zeta} \, \rho d\sigma.
\end{equation}
To avoid cluttered notation, we will usually suppress the dependence on $\zeta_1, \ldots, \zeta_M$ when there is no risk of confusion.  It is clear from above that $\mathcal{Z}^\rho \mu$ gives a single-valued holomorphic function in $\confD_\rho$ that vanishes at infinity.  Moreover, it is continuous up to $\partial\confD_\rho$, and its trace can be evaluated using the Sokhotski--Plemelj formula.  For this, we introduce the operators
\begin{equation}
 \label{trace Z operator}
  \begin{aligned}
    \mathcal{Z}_k^\rho[\mu](\tau) & \colonequals  \mathcal{Z}^\rho[\mu](\zeta_k + \rho \tau) \\ 
    & = \frac{1}{2\pi i} \int_{\mathbb{T}} \frac{\mu_k(\sigma) - \mu_k(\tau)}{\sigma - \tau} \, d\sigma + \sum_{j \neq k} \frac{1}{2\pi i} \int_{\mathbb{T}} \frac{\mu_j(\sigma)}{\rho \sigma + \zeta_j - \rho \tau - \zeta_k} \rho \, d\sigma.
  \end{aligned}
\end{equation}
Note that by Privalov's theorem, $\mathcal{Z}^\rho_k$ is bounded $C^{\ell+\alpha}(\mathbb{T})^M \to C^{\ell+\alpha}(\mathbb{T},\mathbb C)$, for any $\ell \geq 0$ and $\alpha \in (0,1)$.  Moreover, $\mathcal{Z}_k^0 \mu = \mathcal{C} \mu_k$, where $\mathcal{C}$ is the Cauchy-type integral operator
\begin{equation}
  \label{definition C operator}
  \mathcal{C}[\mu_k](\tau) \colonequals  \frac{1}{2 \pi i} \int_{\mathbb{T}} \frac{\mu_k(\sigma) - \mu_k(\tau)}{\sigma - \tau} \, d\sigma.
\end{equation}
One can verify that $\mathcal{C} \tau^m = 0$ for $m \ge 0$ and $\mathcal{C} \tau^m = -\tau^m$ for $m < 0$.  Of course, if $M = 1$, then $\mathcal{Z}_k^\rho = \mathcal{C}$ as well.

\subsection{Abstract local bifurcation theorems}

For the convenience of the reader, we recall here the standard Crandall--Rabinowitz theorem on bifurcation from a simple eigenvalue.  This will be the main tool in the case of $m$-fold symmetric rotating hollow vortices. Note that the classical version of this theorem due to Crandall and Rabinowitz~\cite{crandall1971bifurcation} only assumes finite-regularity of the abstract operator. The proof of the real-analytic version quoted below can be found in~\cite[Theorem 8.3.1]{buffoni2003analytic}, for example.

\begin{theorem}[Crandall--Rabinowitz local bifurcation]
\label{CR theorem}
Let $\Xspace$ and $\Yspace$ be (real) Banach spaces and $\mathscr{O}$ an open subset of $\Xspace \times \mathbb{R}$.  Consider a real-analytic mapping 
\[
	\F = \F(u,\lambda) \colon \mathscr{O} \subset \Xspace \times \mathbb{R} \to \Yspace.
\]
Assume that 
\begin{enumerate}[label=\rm(\roman*\rm)]
	\item $\F(0,\lambda) = 0$ for all $(0,\lambda) \in \mathscr{O}$;
	\item there exists $(0,\lambda_0) \in \mathscr{O}$ such that $D_u \mathscr F(0,\lambda_0) \colon \Xspace \to \Yspace$ is Fredholm index zero with one-dimensional kernel generated by $u_0$; and
	\item \label{CR transversality} $D_\lambda D_u \F(0,\lambda_0) u_0 \not\in \range{D_u \F(0,\lambda_0)}$.
\end{enumerate}
There exists a curve $\cm_\loc \subset \F^{-1}(0)$ admitting the real-analytic parameterization
\[
	\cm_\loc = \left\{ (u^\varepsilon, \lambda^\varepsilon) : |\varepsilon| \ll 1 \right\} \subset \F^{-1}(0) 
\]
and such that
\[
	u^\varepsilon = \varepsilon u_0 + O(\varepsilon) \quad \textrm{in } \Xspace, \qquad \lambda^\varepsilon = \lambda_0 + O(\varepsilon).
\]
Locally, $\cm_\loc$ comprises all elements of the zero-set of $\F$ aside from the family of trivial solutions of the form $(0,\lambda)$.
\end{theorem}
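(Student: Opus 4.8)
This is a classical result, quoted in the excerpt from \cite{buffoni2003analytic}; I will only sketch the Lyapunov--Schmidt reduction on which it rests. The plan is to decompose the equation $\F(u,\lambda) = 0$ into an infinite-dimensional \emph{auxiliary equation}, solvable by the implicit function theorem, together with a single scalar \emph{bifurcation equation} that isolates the genuine obstruction, and then to feed the transversality hypothesis \ref{CR transversality} into the scalar equation.

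First I would use the Fredholm hypothesis (ii). Since $D_u\F(0,\lambda_0)$ has index zero and one-dimensional kernel $\linspan\{u_0\}$, its range is closed and its cokernel one-dimensional; I may therefore fix closed complements $\Xspace = \linspan\{u_0\} \oplus \Xspace_1$ and $\Yspace = \range D_u\F(0,\lambda_0) \oplus \Yspace_1$ with $\dimension\Yspace_1 = 1$, and let $\proj$ be the continuous projection of $\Yspace$ onto $\range D_u\F(0,\lambda_0)$ along $\Yspace_1$, writing $Q \colonequals \id - \proj$. Parameterizing $u = su_0 + v$ with $s \in \R$, $v \in \Xspace_1$, the problem $\F = 0$ splits into $\proj\F(su_0+v,\lambda) = 0$ and $Q\F(su_0+v,\lambda) = 0$.

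Next I would solve the auxiliary equation. Its $v$-derivative at the trivial solution is $\proj D_u\F(0,\lambda_0)\big|_{\Xspace_1}$, an isomorphism of $\Xspace_1$ onto $\range D_u\F(0,\lambda_0)$ by construction, so the real-analytic implicit function theorem produces a unique real-analytic $v = v(s,\lambda)$ near $(0,\lambda_0)$; moreover $v(0,\lambda) \equiv 0$ by hypothesis (i), whence also $\partial_s v(0,\lambda_0) = 0$ upon differentiating the auxiliary equation and using the same isomorphism. Substituting reduces everything to the scalar bifurcation equation $\Phi(s,\lambda) \colonequals Q\F(su_0 + v(s,\lambda),\lambda) = 0$, valued in the line $\Yspace_1$. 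Since $\Phi(0,\lambda) \equiv 0$, the analytic division lemma lets me factor $\Phi(s,\lambda) = s\Psi(s,\lambda)$ with $\Psi$ real-analytic. A short computation using $D_u\F(0,\lambda_0)u_0 = 0$ and $\partial_s v(0,\lambda_0)=0$ gives $\Psi(0,\lambda_0) = 0$ and, differentiating in $\lambda$ (all terms carrying $D_u\F(0,\lambda_0)$ applied to derivatives of $v$ landing in the range, hence annihilated by $Q$), $\partial_\lambda\Psi(0,\lambda_0) = Q\,D_\lambda D_u\F(0,\lambda_0)u_0$. This is exactly where \ref{CR transversality} is decisive: it asserts $D_\lambda D_u\F(0,\lambda_0)u_0 \notin \range D_u\F(0,\lambda_0)$, so $Q$ does not annihilate it and $\partial_\lambda\Psi(0,\lambda_0) \neq 0$. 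A final analytic implicit function theorem applied to $\Psi = 0$ yields $\lambda = \lambda(s)$ with $\lambda(0) = \lambda_0$; setting $\varepsilon = s$ gives the curve $(u^\varepsilon,\lambda^\varepsilon) = (\varepsilon u_0 + v(\varepsilon,\lambda(\varepsilon)),\, \lambda(\varepsilon))$ with the stated asymptotics (using $v(0,\cdot) = 0$), while the uniqueness built into both implicit-function steps shows that any nearby zero either has $s = 0$, i.e.\ is trivial, or lies on $\cm_\loc$.

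I expect the only genuinely delicate step, beyond bookkeeping, to be the analytic division producing $\Psi$: in the finite-regularity version of \cite{crandall1971bifurcation} one instead invokes a Taylor expansion with integral remainder, and the point of the real-analytic statement is precisely that analyticity survives this factorization as well as the two implicit-function solves.
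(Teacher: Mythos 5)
The paper does not prove this theorem; it is quoted as a standard tool with a pointer to \cite[Theorem 8.3.1]{buffoni2003analytic}, whose proof is exactly the Lyapunov--Schmidt reduction you sketch. Your outline is correct in all essentials (the isomorphism for the auxiliary equation, the vanishing of $v(0,\lambda)$ and $\partial_s v(0,\lambda_0)$, the computation $\partial_\lambda\Psi(0,\lambda_0)=Q\,D_\lambda D_u\F(0,\lambda_0)u_0$, and the uniqueness bookkeeping), so there is nothing to flag beyond noting that you are reproving a cited result rather than diverging from the paper.
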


\section{Rotating hollow vortices with \texorpdfstring{$m$}{m}-fold symmetry} 
\label{m-fold section}

For any $\cOmega \in \mathbb{C}$ and $\gamma \in \mathbb{R}$, there is an explicit trivial solution to the self-similar hollow vortex problem \eqref{conformal governing equations} consisting of a single purely circular vortex $\mathscr{V} = B_1$ with complex circulation 
\begin{equation}
 \label{circular vortex gamma}
 \cgamma = \cgamma^0(\gamma, \cOmega) \colonequals \gamma+2\pi i \imagpart{\cOmega} = \gamma + \frac{\pi}{\kappa} i,
\end{equation}
and having the conformal description
\begin{equation}
  \label{trivial solution}
  f^0 \colonequals \id, \qquad w^0 = w^0(\zeta;\cOmega) \colonequals \frac{\cgamma}{2\pi i} \log{\zeta}, \qquad q^0 = q^0(\cOmega) \colonequals \frac{1}{2} \left|\frac{\cgamma}{2\pi} - \cOmega\right|^2 - \frac{1}{2} |\cOmega|^2.
\end{equation}
These are simply the family of vortices~\eqref{intro explicit circular solution} written now in the complex-analytic formulation.  In the general framework of the previous section, this corresponds to taking $M = 1$ and $\rho = 1$.  For simplicity, in the analysis to follow, we take $\gamma \in \mathbb{R}$ to be fixed, so that $\cgamma$ is determined by $\cOmega$.  Notice that the imaginary part of $\cgamma$ corresponds to a ``phantom sink'' at the origin.

The purpose of this section is to characterize the set of $m$-fold symmetric (nontrivial) collapsing or rotating hollow vortices in a neighborhood of the trivial solution~\eqref{trivial solution}.    Recall that, in the conformal variables, $m$-fold symmetry refers to the invariance of the system under rotations by $2\pi/m$ in the following sense:
\begin{equation}
  \label{m-fold symmetry}
   f(e^{2\pi i/m} \placeholder) = e^{2\pi i/m}  f, \qquad w_\zeta( e^{2\pi i/m} \placeholder) = e^{-2\pi i/m}  w_\zeta.
\end{equation}
Of course, $m$-fold symmetry implies $\ell$-fold symmetry for any divisor $\ell \geq 2$ of $m$. In many other settings, one can further require that the vortex configurations exhibits the reflection symmetry
\begin{equation}
  \label{even symmetry}
  f(\overline{\zeta}) = \overline{f(\zeta)}, \end{equation}
which has the effect of removing translation invariance in the vertical direction.   For example, this is done in the construction of rotating $m$-fold symmetric vortex patches in \cite{hassainia2020global,garcia2023global} and cotranslating hollow vortex pairs in \cite{chen2023desingularization}.  On the other hand, the equations for collapsing hollow vortices do not respect this symmetry, and so one does not expect to find solutions satisfying \eqref{even symmetry}.

\subsection{Functional analytic setting}

It will be convenient to impose an ansatz on both the conformal mapping complex potential
\begin{equation}
  \label{w and f ansatz}
  f(\zeta) \equalscolon \zeta + \sum_{n \geq 1} \frac{\hat f_n}{\zeta^{mn-1}}, \qquad  w(\zeta) \equalscolon \frac{\cgamma}{2 \pi i} \log{\zeta}  + \sum_{n \geq 1} \frac{\hat w_n}{\zeta^{mn}}.
\end{equation}
One can check that this implies $f$ and $w-w^0$ are holomorphic on $\confD$, obey the required behavior as $\zeta \to \infty$, have the circulation \eqref{circulation condition}, and exhibit the desired $m$-fold symmetry \eqref{m-fold symmetry}.  In fact, this representation is locally unique, at least under the assumption that the vortex core is conformally equivalent to the unit disk; see Lemmas~\ref{uniqueness lemma} and \ref{near identity lemma}.

As both the kinematic condition \eqref{kinematic condition} and dynamic condition \eqref{dynamic condition} are posed on the vortex boundary, it will be sufficient to consider the traces of $f-\id$ and $w-w^0$ on $\mathbb{T}$.  In keeping with our notation elsewhere, we introduce the densities $\mu$ and $\nu$ so that 
\begin{equation}
  \label{m-fold trace ansatz}
  (f-\id)\Big|_{\mathbb{T}} \equalscolon \mathcal{C} \mu, \qquad \left(w - \frac{\cgamma}{2\pi i} \log{(\placeholder)} \right)\Big|_{\mathbb{T}} \equalscolon \mathcal{C} \nu.
\end{equation}
The symmetry requirements \eqref{m-fold symmetry} and \eqref{even symmetry} are encoded into the functional analytic setting by seeking $(\mu,\nu) \in \Xspace$, for the Banach space $\Xspace = \Xspace_1 \times \Xspace_2$ given by
\begin{equation}
\label{definition X space}
  \begin{aligned}
    \Xspace_1 & \colonequals \Big\{ \mu \in \mathring{C}^{k+1+\alpha}(\mathbb{T}) : \mu(\tau) = 2 \realpart \sum_{n \geq 1} \frac{\hat \mu_n}{\tau^{mn-1}} \Big\} \\
    \Xspace_2 & \colonequals \Big\{ \nu \in \mathring{C}^{k+1+\alpha}(\mathbb{T}) : \nu(\tau) = 2 \realpart \sum_{n \geq 1} \frac{\hat \nu_n}{\tau^{mn}}  \Big\}.
  \end{aligned}
\end{equation}
Here $k \geq 0$ and $\alpha \in (0,1)$ are fixed but arbitrary.  (Note that the indexing convention above is somewhat different than before.  Moreover, since $\mathcal{C} \tau^{-mn} = -\tau^{-mn}$,  comparing this to \eqref{w and f ansatz}, we have $\hat \mu_n = -\hat f_n$ and $\hat \nu_n = -\hat w_n$.) 

Finally, for these to give physical solutions to the problem, we must have that $f = \id + \mathcal{C} \mu$ is conformal on $\mathcal{D}$ and injective on $\overline{\mathcal{D}}$.  By the Darboux--Picard theorem and the minimum modulus principle, it suffices to ensure injectivity holds for $f|_{\Gamma}$.  
For near circular hollow vortices, these requirements can therefore be described by membership in the open set
\[
	\mathscr{O} \colonequals 
		\left\{ 
			(\mu,\nu, q, \cOmega) \in \Xspace \times \mathbb{R} \times \mathbb{C} 
				\colon 
			\inf_{\mathbb{T}} | 1+\mathcal{C}\mu^\prime | > 0
		\right \}.
\]

We can now write the self-similar hollow vortex problem \eqref{conformal governing equations}  abstractly as 
\[
	\F(u;\cOmega) = 0,
\]
where $u \colonequals (\mu,\nu,q)$, $\F = (\F_1, \F_2) \colon \mathscr{O}  \to \Yspace$ is the real-analytic mapping 
\begin{equation}
\label{definition of F} 
  \begin{aligned}
    \F_1(u; \cOmega) & \colonequals \realpart{\left( \tau \left( \frac{\cgamma}{2\pi i \tau} + \mathcal{C} \nu^\prime + i \cOmega (1+ \mathcal{C}\mu^\prime) \overline{(\tau+\mathcal{C}\mu)} \right) \right)}  \\
    \F_2(u; \cOmega) & \colonequals \frac{1}{2} \frac{\left| \frac{\cgamma}{2\pi i \tau} + \mathcal{C} \nu^\prime + i \cOmega (1+\mathcal{C}\mu^\prime) \overline{(\tau+\mathcal{C}\mu)}\right|^2}{|1+\mathcal{C}\mu^\prime|^2} - \frac{1}{2} |\cOmega|^2|\tau+\mathcal{C}\mu|^2 -q,
  \end{aligned}
\end{equation}
and the codomain $\Yspace = \Yspace_1 \times \Yspace_2$ is given by
\[
	\Yspace_1  \colonequals \Big\{ \varphi \in \mathring{C}^{k+\alpha}(\mathbb{T}) : \varphi(\tau) = 2 \realpart{\sum_{n\geq 1} \frac{\hat \varphi_n}{\tau^{mn}} } \Big\}, \quad
	\Yspace_2  \colonequals \Big\{ \varphi \in {C}^{k+\alpha}(\mathbb{T}) : \varphi(\tau) = 2 \realpart{\sum_{n\geq 0} \frac{\hat \varphi_n}{\tau^{mn}} }  \Big\}.
\]
Note that $\F_1$ corresponds to the kinematic condition~\eqref{kinematic condition}, while $\F_2$ represents the dynamic condition~\eqref{dynamic condition}.  The trivial solution \eqref{trivial solution} corresponds to $(\mu,\nu,q) = (0,0,q^0(\cOmega))$.  Recall that we will continue to think of $\gamma \neq 0$ being fixed, and setting $\imagpart{\cgamma}/2\pi = \imagpart{\cOmega}$.

For a (non-collapsing) rotating hollow vortex, we have $\cOmega = \Omega$ and $\cgamma = \gamma$, so that the nonlinear operator $\F$ exhibits certain reflection symmetries.  Taking advantage of these will allow us to reduce the dimension of the kernel and cokernel.  Defining the subspaces 
\begin{equation}
  \label{definition reflection X Y spaces}
  \begin{aligned}
    \Xspace_\even & \colonequals \left\{ (\mu,\nu) \in \Xspace \colon  \hat\mu_n \in \mathbb{R},~\hat \nu_n \in i\mathbb{R} \textrm{ for all } n \geq 1 \right\} \\
    \Yspace_\even & \colonequals \left\{ (\varphi, \psi) \in \Yspace \colon \hat\varphi_n \in i\mathbb{R},~\hat \psi_n \in \mathbb{R} \textrm{ for all } n \geq 1 \right\},
  \end{aligned}
\end{equation}
one can readily verify that the restriction $\F \colon \mathscr{O}_\even  \to \Yspace_\even$ is well-defined and real-analytic, for the restricted neighborhood $\mathscr{O}_\even \colonequals \mathscr{O} \cap (\Xspace_\even \times \mathbb{R} \times \mathbb{R})$.

\subsection{Linear analysis}

We now seek to characterize the kernel and range of the linearized operator 
\begin{equation}
  \label{definition L}
  \mathscr{L} \colonequals  D_{(\mu,\nu,q)} \F(0,0,q^0; \cOmega^0) \colon \Xspace \times \mathbb{R}  \to \Yspace
\end{equation}
at a trivial solution.  An easy calculation shows that
\begin{equation}
  \label{linearized kinematic condition}
  D_{(\mu,\nu,q)}\F_{1}(0,0,q^0; \cOmega) \begin{pmatrix} 
    \dot \mu \\ 
    \dot \nu \\ 
    \dot q 
  \end{pmatrix}
    = \realpart{\left( \tau \mathcal{C} \dot \nu^\prime + i \cOmega \left( \mathcal{C}\dot\mu^\prime + \tau \overline{\mathcal{C}\dot\mu}  \right) \right)  }.	
\end{equation}
Similarly, the linearized Bernoulli condition is 
\begin{equation}
  \label{linearized Bernoulli condition}
  \begin{aligned}
    D_{(\mu,\nu,q)}\F_{2}(0,0,q^0; \cOmega) \begin{pmatrix} 
      \dot \mu \\ 
      \dot \nu \\ 
      \dot q 
    \end{pmatrix}
    & = -\realpart{ \left(  \left( \frac{\overline{\cgamma}}{2\pi i }  + i \overline{\cOmega}  \right)   \left( \tau \mathcal{C} \dot\nu^\prime -\frac{\cgamma}{2\pi i } \mathcal{C}\dot \mu^\prime + i \cOmega \tau \overline{\mathcal{C}\dot \mu} \right) \right)} \\
    & \qquad -  |\cOmega|^2 \realpart{\left( \bar\tau \mathcal{C} \dot \mu \right)}  - \dot q.
  \end{aligned}
\end{equation}

We begin by identifying the null space of the linearized problem at the trivial solution.  Note that $\Xspace$ is treated as a real vector spaces when counting dimensions.

\begin{lemma}[Null space] 
\label{null space lemma}
For a fixed $m \geq 2$ and $\cOmega \in \mathbb{C}$, consider the null space $\kernel{\mathscr{L}}$ of the linearized operator $\mathscr{L}$ given by \eqref{definition L}.
\begin{enumerate}[label=\rm(\alph*\rm)]
	\item \label{dispersion relation part} \textup{(Dispersion relation)} The dimension of $\kernel{\mathscr{L}}$ is twice the number of integers $n \geq 1$ satisfying the dispersion relation
    \begin{equation}
      \label{definition dispersion relation}
      0 = d_m(n,\cOmega) \colonequals (1-mn) \left| \Omega - \frac{\gamma}{2\pi} \right|^2 + \frac{\gamma}{\pi} \overline{\cOmega} - \overline{\cOmega}^2.
    \end{equation}
	For any $\cOmega \neq 0$, there is at most one $n \geq 1$ satisfying \eqref{definition dispersion relation}.  If $n$ is a root, then $\{ \dot v_n^1, \, \dot v_n^2 \}$ form a basis (over $\mathbb{R})$ for $\kernel{\mathscr{L}}$, where
	\begin{equation}
	\label{definition kernel generator}
		\dot v_n^1  \colonequals 
    \realpart
		\begin{pmatrix}
			\tau^{mn-1} \\
			 i{ \frac{  (mn-1)\cOmega+\overline{\cOmega}}{mn}  \tau^{mn} } \\
			 0
		\end{pmatrix},
			\qquad
		\dot v_n^2  \colonequals 
    \realpart
    \begin{pmatrix}
      { i \tau^{mn-1} }  \\
       -{ \frac{  (mn-1)\cOmega+\overline{\cOmega}}{mn}  \tau^{mn}} \\
       0
    \end{pmatrix}.
\end{equation}
	\item \textup{(Pure rotation)} Suppose that $\imagpart{\cOmega} = 0$.  Then for any $n \geq 1$, there are exactly two values of $\Omega$ for which the dispersion relation holds:
    \begin{equation}
      \label{rotating dispersion relation}
      \Omega = \frac{\gamma}{2\pi} \left( 1 \pm \frac{1}{\sqrt{mn}} \right).
    \end{equation}
	\item \textup{(Collapsing)} If $\imagpart{\cOmega} \neq 0$, then the function $d_m(\placeholder, \cOmega)$ has no roots for any $m \geq 2$.
  \end{enumerate}
\end{lemma}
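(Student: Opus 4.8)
The plan is to exploit the fact that, in the basis $\{\tau^k\}$, the Cauchy operator $\mathcal{C}$ acts diagonally (annihilating nonnegative powers and sending $\tau^{-p}\mapsto -\tau^{-p}$), so that $\mathscr{L}$ decouples mode-by-mode. Writing the perturbations as $\dot\mu = 2\realpart\sum_{n\geq1}\widehat{\dot\mu}_n\tau^{-(mn-1)}$ and $\dot\nu = 2\realpart\sum_{n\geq1}\widehat{\dot\nu}_n\tau^{-mn}$ according to \eqref{definition X space}, I would substitute directly into \eqref{linearized kinematic condition} and \eqref{linearized Bernoulli condition}, write each output in the form $2\realpart\sum_{n\geq1}(\cdots)\tau^{-mn}$, and read off the coefficient for each fixed $n$. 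This yields, for every $n\geq1$, a pair of $\mathbb{C}$-linear equations in the two unknowns $(\widehat{\dot\mu}_n,\widehat{\dot\nu}_n)$; the $n=0$ component of $\F_2$ only fixes $\dot q=0$ and so contributes nothing to $\kernel\mathscr{L}$.

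The first equation, from \eqref{linearized kinematic condition}, has coefficient proportional to $mn\,\widehat{\dot\nu}_n + i\big[(mn-1)\cOmega+\overline\cOmega\big]\widehat{\dot\mu}_n$, so the kinematic condition determines $\widehat{\dot\nu}_n$ in terms of $\widehat{\dot\mu}_n$; this is exactly the relation recorded in the second components of the proposed generators \eqref{definition kernel generator}. Substituting this into the coefficient extracted from \eqref{linearized Bernoulli condition} and factoring out $\widehat{\dot\mu}_n$ leaves a scalar which, I claim, is (up to a nonzero constant) $d_m(n,\cOmega)$. Verifying this algebraic reduction is the step I expect to be the main obstacle. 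The key simplification uses that, by \eqref{circular vortex gamma}, one has $\imagpart\cgamma=2\pi\imagpart\cOmega$, so that
\[
  \beta \colonequals \cOmega-\tfrac{\cgamma}{2\pi}=\Omega-\tfrac{\gamma}{2\pi}\in\mathbb{R};
\]
this reality collapses the various $|\cOmega|^2$, $c\overline\cOmega$, and $\bar c\,\cgamma$ cross-terms arising from $c:=\tfrac{\cgamma}{2\pi i}+i\cOmega=i\beta$, and the identity $-2\overline\cOmega\,\beta+|\cOmega|^2=\tfrac{\gamma}{\pi}\overline\cOmega-\overline\cOmega^2$ (checked via $\Omega=\tfrac12(\cOmega+\overline\cOmega)$) recasts the coefficient as $(1-mn)\beta^2+\tfrac{\gamma}{\pi}\overline\cOmega-\overline\cOmega^2=d_m(n,\cOmega)$. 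Hence $\widehat{\dot\mu}_n$ may be nonzero precisely when $d_m(n,\cOmega)=0$, in which case both $\widehat{\dot\mu}_n=1$ and $\widehat{\dot\mu}_n=i$ are admissible, producing the two real generators $\dot v_n^1,\dot v_n^2$; every mode with $d_m\neq0$ forces $\widehat{\dot\mu}_n=\widehat{\dot\nu}_n=0$. This establishes part \ref{dispersion relation part} and the basis.

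For the remaining parts I would simply analyze $d_m(\cdot,\cOmega)$. As a function of $n$ it is affine with slope $-m\beta^2$, so if $\beta\neq0$ the real-affine map $n\mapsto d_m(n,\cOmega)$ traces a nonconstant line in $\mathbb{C}$ and meets $0$ at most once; if $\beta=0$ then $d_m=\overline\cOmega\big(\tfrac{\gamma}{\pi}-\overline\cOmega\big)$ is independent of $n$ and nonzero whenever $\cOmega\neq0$ (since $\beta=0$ rules out $\overline\cOmega=\tfrac{\gamma}{\pi}$), giving the ``at most one $n$'' assertion. For part (b), setting $\imagpart\cOmega=0$ makes $\cOmega=\Omega$ real and, after the cancellation $\tfrac{\gamma}{\pi}\Omega-\Omega^2=-\beta^2+\tfrac{\gamma^2}{4\pi^2}$, reduces the dispersion relation to $-mn\beta^2+\tfrac{\gamma^2}{4\pi^2}=0$, whose solutions are $\Omega=\tfrac{\gamma}{2\pi}\big(1\pm\tfrac{1}{\sqrt{mn}}\big)$. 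For part (c), writing $\overline\cOmega=\Omega+\tfrac{i}{2\kappa}$ with $\tfrac{1}{2\kappa}\neq0$, a short computation gives $\imagpart d_m=\tfrac{1}{2\kappa}\big(\tfrac{\gamma}{\pi}-2\Omega\big)$, which vanishes only when $\Omega=\tfrac{\gamma}{2\pi}$, i.e.\ $\beta=0$; but then $\realpart d_m=\tfrac{\gamma^2}{4\pi^2}+\tfrac{1}{4\kappa^2}>0$, so $d_m$ never vanishes and the collapsing linearized operator has trivial kernel.
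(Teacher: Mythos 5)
Your proposal is correct and follows essentially the same route as the paper: diagonalize $\mathscr{L}$ mode-by-mode using the Fourier action of $\mathcal{C}$, eliminate $\hat\nu_n$ via the linearized kinematic condition, substitute into the linearized Bernoulli condition to reduce each mode to the scalar $d_m(n,\cOmega)$ (using exactly the reality of $\Omega-\gamma/2\pi$ that you identify), and then analyze that function for parts (a)--(c). Your handling of the ``at most one root'' claim is if anything slightly more careful than the paper's, since you separately treat the case $\Omega=\gamma/2\pi$, where $d_m(\cdot,\cOmega)$ is constant in $n$ rather than strictly monotone, and note that the constant is then nonzero.
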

\begin{proof}
Evaluating the linearized kinematic condition \eqref{linearized kinematic condition} with $\dot \mu = 2 \realpart{(\hat \mu_n / \tau^{mn-1})}$ and $\dot \nu = 2 \realpart{( \hat \nu_\ell / \tau^{m\ell})}$, we obtain
\begin{equation}
\label{DF10} 
  \begin{aligned}
    D_{u} \F_{1}(0,0,q^0; \cOmega) \begin{pmatrix} 
      \dot \mu \\ 
      \dot \nu \\ 
      \dot q 
    \end{pmatrix}
    & = \realpart{ \left( m\ell \frac{\hat \nu_\ell}{\tau^{m\ell}} - i \cOmega \left( (1-mn) \frac{\hat \mu_n}{\tau^{mn}} +\overline{\hat \mu_n} \tau^{mn} \right)\right) } \\
    & = \realpart{\left( m\ell \frac{\hat \nu_\ell}{\tau^{m\ell}} + i \left( (mn-1) \cOmega +  \overline{\cOmega} \right) \frac{\hat \mu_n}{\tau^{mn}} \right)}.
  \end{aligned}
\end{equation}
Thus, $(\dot \mu, \dot \nu,\dot q)$ is in the kernel of the linearized kinematic condition if and only if
\[
	\hat \nu_n = -i \frac{(mn-1) \cOmega+\overline{\cOmega}}{mn} \hat \mu_n \equalscolon -i \cOmega_n \hat \mu_n \qquad \textrm{for all } n \geq 1.
\]

Similarly, evaluating the linearized Bernoulli condition acting on $(\dot \mu, \dot \nu, \dot q)$ given as above, we find that 
\begin{equation}
\label{DF20}
  \begin{aligned}
    D_{u} \F_{2}(0,0,q^0; \cOmega) \begin{pmatrix} 
      \dot \mu \\ 
      \dot \nu \\ 
      \dot q 
    \end{pmatrix}
    & = \realpart{ \left(  \left( \frac{\overline{\cgamma}}{2\pi i } + i \overline{\cOmega} \right)   \left( -m\ell \frac{\hat \nu_\ell}{\tau^{m\ell}} + \frac{\cgamma}{2\pi i} \frac{(mn-1)  \hat \mu_n} {\tau^{mn}} + i \cOmega \overline{\hat \mu_n} \tau^{mn} \right) \right)} \\
    & \qquad + |\cOmega|^2 \realpart{\left( \frac{\hat \mu_n}{\tau^{mn}}\right)}  - \dot q.
  \end{aligned}
\end{equation}
To identify the kernel of $\mathscr{L}$, we may therefore set $\hat \nu_n = -i \cOmega_n \hat \mu_n$ and evaluate along the diagonal $\ell = n$, which gives
\begin{align*}
	0 & = \realpart{ \left(  \left( \frac{\overline{\cgamma}}{2\pi i } + i \overline{\cOmega} \right)   \left( mn \frac{i \cOmega_n \hat \mu_n}{\tau^{mn}} + \frac{\cgamma}{2\pi i} \frac{(mn-1)  \hat \mu_n} {\tau^{mn}} + i \cOmega \overline{\hat \mu_n} \tau^{mn} \right) \right)}\\
		& \qquad + |\cOmega|^2 \realpart{\left( \frac{\hat \mu_n}{\tau^{mn}}\right)}  - \dot q \\
		& = \realpart{ \left(\left( \left( \frac{\overline{\cgamma}}{2\pi i } + i \overline{\cOmega} \right)\left( mn i \cOmega_n + \frac{\cgamma}{2\pi i} (mn-1) \right) +  \frac{\cgamma}{2\pi} \overline{\cOmega}   \right) \frac{\hat \mu_n}{\tau^{mn}} \right)} - \dot q \\ 
		& = \realpart{ \left( \left(  (1-mn) \left| \frac{\cgamma}{2\pi i} + i \cOmega \right|^2 + \frac{\gamma}{\pi} \overline{\cOmega} - \overline{\cOmega}^2  \right) \frac{\hat \mu_n}{\tau^{mn}} \right)} -\dot q \\
		& = \realpart{ \left( d_{m}(n,\cOmega) \frac{\hat \mu_n}{\tau^{mn}} \right)} - \dot q,
\end{align*}
where $d_m$ is given by \eqref{definition dispersion relation}.   Note that here we have made use of the fact that $\imagpart{\cgamma}/2\pi = \imagpart{\cOmega}$ to see that only $\gamma$ occurs in the dispersion relation.  Clearly, then, the kernel is nontrivial if and only if $d_m(n,\cOmega) = 0$, and we can see from this construction that it is  generated by $\{\dot v_n^1, \dot v_n^2\}$.   Moreover, $d_m(\placeholder,\cOmega)$ is strictly monotone for $\cOmega \neq 0$, so there can be at most one root for each fixed $m$ and $\cOmega$.  This proves the statement in part~\ref{dispersion relation part}.

 If $\imagpart{\cOmega} = 0$, meaning we have purely rotating (non-collapsing) vortices, then it is easy to see that 
\[
	d_m(n,\cOmega) = d_m(n,\Omega) = (1-mn) \left(\Omega-\frac{\gamma}{2\pi} \right)^2 + \frac{\gamma}{\pi} \Omega.
\]
Setting $d_m(n,\Omega) = 0$, gives a quadratic polynomial in $\Omega$ whose roots are precisely those in~\eqref{rotating dispersion relation}. On the other hand, if $\imagpart{\cOmega} \neq 0$, then we calculate that
\[
	\imagpart{d_m(n,\cOmega)} = \frac{1}{\kappa} \left( \frac{\gamma}{2\pi} - \Omega \right),
\]
and hence the kernel will be nontrivial only if $\Omega = \gamma/(2\pi)$.  Taking this value for $\Omega$, we then find that 
\[
	\realpart{d_m(n,\cOmega)} =  \frac{\gamma}{2\pi} \Omega - \Omega^2 + \frac{1}{4 \kappa^2} = \frac{1}{4 \kappa^2}.
\]
But the right-hand side above is clearly non-vanishing when $\imagpart{\cOmega} \neq 0$, so $d_m(\placeholder, \cOmega)$ has no roots for the collapsing case.
\end{proof}

Consider next the range of $\mathscr{L}$.

\begin{lemma}[Range]
\label{range lemma}
Fix $m \geq 2$ and $\cOmega \in \mathbb{C}$.  
\begin{enumerate}[label=\rm(\alph*\rm)]
	\item If $d_m(n,\cOmega) \neq 0$ for all $n \geq 1$, then $\mathscr{L}$ is surjective.  
	\item \label{cokernel generator part} If $d_m(n,\cOmega) = 0$ for some $n \geq 1$, then for  $\dot w_n^1, \dot w_n^2 \in \Yspace^*$ given by
	\begin{equation}
		\label{range condition}
    \begin{aligned}
      \langle \dot w_n^1, ~(\A,\B) \rangle_{\Yspace^* \times \Yspace} & \colonequals \realpart \int_{\mathbb{T}} \left( \left( \frac{\overline{\cgamma}}{2\pi i} + i \overline{\cOmega} \right) \A + \B \right) \tau^{mn} \, d\theta,
      \\
      \langle \dot w_n^2,~(\A,\B) \rangle_{\Yspace^* \times \Yspace} & \colonequals \realpart \int_{\mathbb{T}} \left( \left( \frac{\overline{\cgamma}}{2\pi i} + i \overline{\cOmega} \right) \A + \B \right) i \tau^{mn} \, d\theta .
    \end{aligned}
\end{equation}
it holds that $\range{\mathscr{L}} = \kernel{\dot w_n^1} \cap \kernel{\dot w_n^2}$.  In particular, $\range{\mathscr{L}}$ has codimension $2$.
\end{enumerate}
\end{lemma}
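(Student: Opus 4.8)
The plan is to diagonalize $\mathscr{L}$ in the Fourier modes adapted to $\Xspace$ and $\Yspace$. Writing the inputs as $\dot\mu = 2\realpart\sum_{n\geq1}\hat\mu_n/\tau^{mn-1}$ and $\dot\nu = 2\realpart\sum_{n\geq1}\hat\nu_n/\tau^{mn}$, the formulas \eqref{DF10}--\eqref{DF20} show that the frequency-$mn$ part of $\mathscr{L}(\dot\mu,\dot\nu,\dot q)$ is determined by $(\hat\mu_n,\hat\nu_n)$ alone, with no cross-mode coupling (the background coefficients $\cgamma,\cOmega$ are constants), while the real scalar $\dot q$ feeds only into the mean of the Bernoulli component $\F_2$. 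Since $\Yspace_1$ is mean-free, $\mathscr{L}$ thus splits as the isomorphism $\dot q \mapsto -\dot q$ onto the mean of $\Yspace_2$ together with a family of maps $L_n\colon(\hat\mu_n,\hat\nu_n)\mapsto(\widehat\A_n,\widehat\B_n)$; reading off the coefficients, each $L_n$ is in fact \emph{complex}-linear, i.e.\ a $2\times2$ complex matrix. The entire range question therefore reduces to analysing these $L_n$ and assembling the pieces.

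The crux is a single algebraic identity already latent in the proof of Lemma~\ref{null space lemma}. Abbreviate $\beta\colonequals\overline{\cgamma}/(2\pi i)+i\overline{\cOmega}$, the weight appearing in \eqref{range condition}. From \eqref{DF10} the kinematic coefficient is $A_n\colonequals 2\widehat\A_n = mn\,\hat\nu_n + i\big((mn-1)\cOmega+\overline\cOmega\big)\hat\mu_n$, and from \eqref{DF20} the Bernoulli coefficient $B_n\colonequals 2\widehat\B_n$ contains $\hat\nu_n$ only through the term $-\beta\,mn\,\hat\nu_n$. Consequently the $\hat\nu_n$-dependence cancels in $\beta A_n + B_n$, and evaluating the remaining $\hat\mu_n$-dependence at the special value of $\hat\nu_n$ making $A_n=0$ — which is exactly the substitution performed in Lemma~\ref{null space lemma} — yields $\beta A_n + B_n = d_m(n,\cOmega)\,\hat\mu_n$ for all $(\hat\mu_n,\hat\nu_n)$, with $d_m$ as in \eqref{definition dispersion relation}. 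This explains the shape of the functionals \eqref{range condition}: since $\int_\mathbb{T}\A\,\tau^{mn}\,d\theta = 2\pi\widehat\A_n$ and likewise for $\B$, one has $\langle\dot w_n^1,(\A,\B)\rangle = 2\pi\realpart(\beta\widehat\A_n+\widehat\B_n)$ and $\langle\dot w_n^2,(\A,\B)\rangle = -2\pi\imagpart(\beta\widehat\A_n+\widehat\B_n)$, so together they detect the single complex number $\beta\widehat\A_n+\widehat\B_n$.

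For part~(a), suppose $d_m(n,\cOmega)\neq0$ for every $n\geq1$. By Lemma~\ref{null space lemma} each $L_n$ has trivial kernel, hence — being a square $\mathbb C$-linear map — is invertible, and $\dot q$ covers the mean of $\Yspace_2$; this gives surjectivity at the level of Fourier coefficients. To promote this to surjectivity of $\mathscr{L}\colon\Xspace\times\mathbb{R}\to\Yspace$ in the Hölder topology, I would use that the leading behaviour of $A_n,B_n$ is linear in $n$ (coming from the derivative terms $\mathcal{C}\dot\nu'$ and $\mathcal{C}\dot\mu'$), so that $\|L_n^{-1}\| = O((mn)^{-1})$ and the reconstructed densities gain one derivative relative to the data, landing in $\Xspace$. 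Equivalently, and more robustly, one records that $\mathscr{L}$ equals an explicitly invertible first-order principal part plus a smoothing, hence compact, remainder, following the layer-potential analysis of \cite{chen2023desingularization}; this makes $\mathscr{L}$ Fredholm with closed range, so coefficientwise surjectivity upgrades to genuine surjectivity.

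For part~(b), let $n_0\geq1$ be the unique index with $d_m(n_0,\cOmega)=0$, unique because $\cOmega\neq0$ by Lemma~\ref{null space lemma}. Every block $L_n$ with $n\neq n_0$ is invertible as in part~(a) and $\dot q$ covers the mean, so the sole obstruction lives in the $n_0$-block. Applying the functionals to a range element $(\A,\B)=\mathscr{L}(\dot\mu,\dot\nu,\dot q)$ and using the identity gives $\beta\widehat\A_{n_0}+\widehat\B_{n_0} = \tfrac12 d_m(n_0,\cOmega)\hat\mu_{n_0}=0$, whence $\range\mathscr{L}\subseteq\kernel\dot w_{n_0}^1\cap\kernel\dot w_{n_0}^2$, a subspace of codimension two (the two functionals being the real and imaginary parts of a nonzero complex functional). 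For the reverse inclusion, $L_{n_0}$ is $\mathbb{C}$-linear with $\dim_{\mathbb C}\kernel L_{n_0}=1$, so $\range L_{n_0}$ is a complex line contained in the complex line $\{\beta\widehat\A_{n_0}+\widehat\B_{n_0}=0\}$ and therefore equal to it; combined with the other (invertible) blocks, the $\dot q$-map, and the same closed-range input as in part~(a), this yields $\range\mathscr{L}=\kernel\dot w_{n_0}^1\cap\kernel\dot w_{n_0}^2$ exactly. The one genuinely analytic point — and the step I expect to require the most care — is this closed-range/Fredholm property in the Hölder scale; the algebra of the blocks, by contrast, is essentially finished once the identity $\beta A_n+B_n=d_m(n,\cOmega)\hat\mu_n$ is in hand.
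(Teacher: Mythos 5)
Your proposal is correct and follows essentially the same route as the paper: a mode-by-mode reduction in which the $\hat\nu_n$-dependence cancels in $\bigl(\tfrac{\overline{\cgamma}}{2\pi i}+i\overline{\cOmega}\bigr)\hat a_n+\hat b_n$, leaving the identity $=d_m(n,\cOmega)\hat\mu_n$ that both characterizes the range and, via the Fredholm index-zero property, yields surjectivity when $d_m$ has no roots. The only difference is presentational: the paper dispatches the closed-range/Fredholm upgrade with a one-line appeal to continuity of the index, whereas you correctly flag it as the step needing care and supply the standard compact-perturbation justification.
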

\begin{proof}
Let $(\A,\B) \in \Yspace$ be given and suppose that $\mathscr{L}(\dot \mu ,\dot \nu, \dot q) = (\A,\B)$ for some $(\dot \mu, \dot \nu, \dot q) \in \Xspace \times \mathbb{R}$.  Writing
\[
	\dot \mu(\tau) = 2\realpart \sum_{n \geq 1} \frac{\hat \mu_n}{\tau^{mn-1}}, \quad \dot \nu(\tau) = 2 \realpart \sum_{n \geq 1} \frac{\hat \nu_n}{\tau^{mn}}, \quad \A(\tau) = 2 \realpart \sum_{n \geq 1} \frac{\hat a_n}{\tau^{mn}}, \quad \B(\tau) = 2\realpart \sum_{n \geq 0} \frac{\hat b_n}{\tau^{mn}},
\]
then the linearized kinematic condition gives
\[
	m\ell \hat \nu_\ell + i \cOmega_\ell \hat \mu_\ell  = \hat a_\ell \qquad \textrm{for all } \ell \geq 1.
\]
Thus we can ``row-reduce'' $\mathscr{L}$ to eliminate $\dot \nu$ in favor of $\A$ and $\dot \mu$.  Making this substitution in the linearized dynamic condition and computing as in the proof of Lemma~\ref{null space lemma} leads to
\begin{align*}
	\hat b_\ell &= 
		\left\{ \begin{aligned}
			d_m(\ell,\cOmega) \hat \mu_\ell  - \left( \frac{\overline{\cgamma}}{2\pi i}+ i \overline{\cOmega} \right) \hat a_\ell & \qquad \textrm{for all } \ell \geq 1 \\
			-\dot q & \qquad \textrm{for } \ell = 0.
		\end{aligned} \right.
\end{align*}
Clearly, this implies that $\mathscr{L}$ is surjective if $d_m(\ell,\cOmega) \neq 0$ for all $\ell \geq 1$, which corresponds to $\mathscr{L}$ being injective by Lemma~\ref{null space lemma}.  
By the same result, we know that there exists at most one $n \geq 1$ such that $d_m(n,\cOmega) = 0$.  In this case, we must have that
\[
	 \left( \frac{\overline{\cgamma}}{2\pi i} + i \overline{\cOmega} \right) \hat a_n + \hat b_n= 0,
\] 
which is equivalent to \eqref{range condition}.
By the continuity of the index, $\mathscr{L}$ continues to be Fredholm index zero, and so this is a complete characterization of $\range{\mathscr{L}}$.
\end{proof}

\subsection{Proof of Theorems~\ref{intro m-fold rotating theorem} and~\ref{intro rigidity corollary}}

We are now prepared to prove the existence of the nearly circular $m$-fold symmetric rotating hollow vortices and the local uniqueness of circular collapsing vortices.  First, consider the rotating case.  

\begin{theorem}[Rotating hollow vortices] \label{m-fold rotating theorem}
Fix $m \geq 2$ and let $\Omega^0 = \Omega_{0,\pm}$, for $\Omega_{0,\pm}$ defined as in~\eqref{definition Omega_0^pm}.  There exists a one-parameter family $\cm_{\rot}^{m,\pm} \subset \mathscr{O}$ of solutions to the rotating hollow vortex problem admitting the real-analytic parameterization   
\[
	\cm_{\rot}^{m,\pm} = \left\{ (\mu_\pm^\varepsilon, \nu_\pm^\varepsilon, q_\pm^\varepsilon, \Omega_\pm^\varepsilon ) : |\varepsilon| \ll 1 \right\} \subset \F^{-1}(0)
\] 
with
\begin{equation}\label{rotating density asymptotics}
	\mu_\pm^\varepsilon = 2\varepsilon \realpart{\frac{1}{\tau^{m-1}}} + O(\varepsilon^2), \quad  \nu_\pm^\varepsilon = 2\varepsilon \realpart{\frac{\Omega_{0,\pm}}{\tau^{m}}} + O(\varepsilon^2) \qquad \textrm{in } C^{k+1+\alpha}(\mathbb{T}),
\end{equation}
and
\[
	q_\pm^\varepsilon = q^0(\Omega_{0,\pm}) + O(\varepsilon^2), \quad \Omega_\pm^\varepsilon = \Omega_{0,\pm} + O(\varepsilon^2).
\]
In a neighborhood of $(0, \Omega_{0,\pm})$, the curve $\cm_\rot^{m,\pm}$ comprises all nontrivial solutions of $\F(u,\Omega) = 0$.  
\end{theorem}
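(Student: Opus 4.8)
The plan is to obtain $\cm_\rot^{m,\pm}$ as the bifurcation curve produced by the Crandall--Rabinowitz theorem (Theorem~\ref{CR theorem}) applied to the restricted operator $\F\colon \mathscr O_\even \to \Yspace_\even$, with bifurcation parameter $\lambda = \Omega$ and the mode index fixed to $n=1$. Two preliminary adjustments set up the abstract framework. Since the trivial family sits at $(\mu,\nu,q)=(0,0,q^0(\Omega))$ rather than at the origin, I would pass to the shifted unknown $\tilde q \colonequals q - q^0(\Omega)$, so that the trivial solutions become $u=(\mu,\nu,\tilde q)=0$ for every $\Omega$; this is cosmetic and verifies hypothesis~(i). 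Note that the $\Omega$-dependence introduced by $q^0(\Omega)$ affects only the constant ($\ell=0$) Fourier mode, and so will not interfere with the analysis at the mode $n=1$ where the degeneracy lives. The substantive point is that the reflection symmetry across the real axis, encoded by the splitting $\Xspace = \Xspace_\even \oplus \Xspace_{\mathrm{odd}}$ and $\Yspace = \Yspace_\even \oplus \Yspace_{\mathrm{odd}}$ via \eqref{definition reflection X Y spaces}, cuts the two-dimensional degeneracy down to a simple eigenvalue.

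Concretely, at $\Omega = \Omega_{0,\pm}$ — the two roots of $d_m(1,\cdot)$ given by \eqref{rotating dispersion relation} with $n=1$ — Lemma~\ref{null space lemma} gives $\kernel\mathscr L = \linspan_{\mathbb R}\{\dot v_1^1,\dot v_1^2\}$ with generators \eqref{definition kernel generator}. Inspecting Fourier coefficients (with $\cOmega=\Omega\in\mathbb R$), $\dot v_1^1$ has $\hat\mu_1 = 1 \in \mathbb R$ and $\hat\nu_1 = i\Omega \in i\mathbb R$, hence lies in $\Xspace_\even$, whereas $\dot v_1^2$ has $\hat\mu_1 = i$ and so lies in $\Xspace_{\mathrm{odd}}$. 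Since $\mathscr L$ commutes with the reflection and therefore respects this splitting, the restriction $\mathscr L|_{\Xspace_\even}$ has one-dimensional kernel $\linspan_{\mathbb R}\{\dot v_1^1\}$. By the same token, Lemma~\ref{range lemma} shows $\range\mathscr L$ has codimension two, cut out by $\dot w_1^1,\dot w_1^2$ from \eqref{range condition}, which sit in the two reflection blocks of $\Yspace^*$; restricting to $\Yspace_\even$ leaves a single independent constraint, so $\mathscr L|_{\Xspace_\even}$ is Fredholm of index zero with one-dimensional cokernel. This establishes hypothesis~(ii).

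For the transversality condition~(iii), I would trace through the row-reduction in the proof of Lemma~\ref{range lemma}: the mode-$n=1$ block of $\mathscr L_\Omega$ acts on $\hat\mu_1$ through the scalar $d_m(1,\Omega)$, so $D_\Omega D_u\F(0;\Omega_{0,\pm})\dot v_1^1$ reduces to a nonzero multiple of $\partial_\Omega d_m(1,\Omega)\big|_{\Omega_{0,\pm}}$ in the surviving cokernel direction. Because $\gamma \neq 0$ forces $\Omega_{0,+}\neq\Omega_{0,-}$, the quadratic $d_m(1,\cdot)$ has two distinct, hence simple, roots, so $\partial_\Omega d_m(1,\cdot)$ is nonzero at each; this is exactly~(iii). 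Theorem~\ref{CR theorem} then yields the real-analytic curve $\cm_\rot^{m,\pm}$, and reading $u^\varepsilon = \varepsilon\dot v_1^1 + O(\varepsilon^2)$ off its conclusion produces the leading-order densities \eqref{rotating density asymptotics}. To upgrade the generic $\Omega^\varepsilon = \Omega_{0,\pm}+O(\varepsilon)$ to $\Omega^\varepsilon=\Omega_{0,\pm}+O(\varepsilon^2)$ (and likewise for $q^\varepsilon$), I would invoke the rotation $\mathcal R$ by $\pi/m$: on $m$-fold symmetric even solutions it is an involution ($\mathcal R^2$ being the rotation \eqref{m-fold symmetry} by $2\pi/m$), it preserves $\Xspace_\even$ while sending $\hat\mu_n,\hat\nu_n \mapsto (-1)^n\hat\mu_n,(-1)^n\hat\nu_n$, and it fixes both $\Omega$ and $q$. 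Thus $\mathcal R$ negates the bifurcation amplitude while mapping $\cm_\rot^{m,\pm}$ into itself; the local uniqueness clause of Theorem~\ref{CR theorem} then forces $\Omega^{-\varepsilon}=\Omega^\varepsilon$ and $q^{-\varepsilon}=q^\varepsilon$, so both are even in $\varepsilon$ and their $O(\varepsilon)$ terms vanish. The final assertion, that $\cm_\rot^{m,\pm}$ exhausts the nontrivial solutions near $(0,\Omega_{0,\pm})$ in the even class, is immediate from that same clause.

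The step I expect to be most delicate is the transversality verification: one must carefully carry the $\Omega$-dependence of the linearized operator through $D_\Omega D_u\F$ and correctly identify the surviving cokernel pairing in $\Yspace_\even$. The reward is that it collapses to the simple nonvanishing of $\partial_\Omega d_m(1,\cdot)$ at its roots, which follows at once from $\gamma\neq 0$. A secondary subtlety is confirming that $\mathcal R$ genuinely realizes $\varepsilon\mapsto-\varepsilon$ on the amplitude-parametrized curve — i.e., that it preserves both the even subspace and the solution set while fixing $\Omega$ — which is what rigorously licenses the even-power expansions.
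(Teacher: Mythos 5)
Your proposal is correct and follows essentially the same route as the paper's proof: shift the unknown by $q^0(\Omega)$ so the trivial branch sits at the origin, restrict to the reflection-symmetric pair $\Xspace_\even \to \Yspace_\even$ so that the two-dimensional kernel and cokernel of Lemmas~\ref{null space lemma} and~\ref{range lemma} collapse to $\linspan\{\dot v_1^1\}$ and $\linspan\{\dot w_1^1\}$, apply Theorem~\ref{CR theorem}, and use a symmetry to upgrade $\Omega_\pm^\varepsilon-\Omega_{0,\pm}$ from $O(\varepsilon)$ to $O(\varepsilon^2)$. The only substantive divergence is the transversality check. The paper evaluates the pairing $\langle \dot w_1^1, \G^0_{\Omega u}\dot v_1^1\rangle$ directly and finds $\mp 3\sqrt m\,\gamma/(2\pi)$; note this is \emph{not} literally $\partial_\Omega d_m(1,\Omega_{0,\pm})$ --- there is an additional contribution $m(\gamma/(2\pi)-\Omega_{0,\pm})$ coming from the $\Omega$-derivative of the kinematic component tested against the first slot of $\dot w_1^1$. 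Your shortcut (the pairing vanishes iff $\partial_\Omega d_m(1,\Omega_{0,\pm})=0$) is nevertheless valid, but as written it has a small jump: the scalar $d_m$ is produced by an $\Omega$-dependent elimination of $\hat\nu_1$, so one must check that differentiating the elimination does not contaminate the pairing. The clean justification is to write the row reduction as $D(\Omega)=A(\Omega)\mathscr{L}(\Omega)$ with $A(\Omega)$ invertible; the terms involving $\partial_\Omega A$ are annihilated because $D(\Omega_{0,\pm})\dot v_1^1=0$ and the transported cokernel functional kills $\range{D(\Omega_{0,\pm})}$, leaving only the $\partial_\Omega d_m(1,\Omega_{0,\pm})\,\hat\mu_1$ entry, which is nonzero since the quadratic $d_m(1,\cdot)$ has two distinct roots when $\gamma\neq 0$. (Consistency check: the paper's $\mp 3\sqrt m\,\gamma/(2\pi)$ equals $\tfrac{3}{2}\,\partial_\Omega d_m(1,\Omega_{0,\pm})$.) Finally, your half-period rotation $\zeta\mapsto e^{i\pi/m}\zeta$, which preserves $\Xspace_\even$, fixes $\Omega$ and $q$, and negates the $n=1$ mode, is a correct and usefully explicit version of the symmetry-and-uniqueness argument that the paper invokes in a single sentence at the end of its proof.
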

\begin{remark}
We derive higher-order asymptotics for these solutions in Appendix~\ref{asymptotics appendix}. Note also that uniqueness holds in a larger symmetry class. The dispersion relation function satisfies  $d_m(n,\placeholder) = d_n(m,\placeholder)$ for any $n \geq 1$. Letting $\ell \geq 2$ be the largest prime divisor of $m$, the same argument given below reveals that the solutions along $\cm_{\rot}^{m,\pm}$ are locally the only nontrivial solutions in the larger space of $\ell$-fold symmetric vortices. 
\end{remark}

\begin{proof}[Proof of Theorem~\ref{m-fold rotating theorem}]
  For $\Omega^0$ given as above, we have by \eqref{definition Omega_0^pm} and \eqref{definition dispersion relation} that $d_m(1,\Omega^0) = 0$.  Define a new mapping $\G = \G(u; \Omega) \colon \mathscr{O}_\even  \to \Yspace_\even$ by
\[
	\G(\mu,\nu,q; \Omega) \colonequals \F(\mu,\nu,q+q^0(\Omega); \Omega).
\]
Thus, 
\[
	\G(0,0,0; \Omega) = 0 \qquad \textrm{for all } \Omega \in \mathbb{R}.
\]
Moreover, because 
\[
	D_{u} \G(0,0,0; \Omega) \equalscolon \G_{u}^0 =  \mathscr{L} \colon \Xspace_\even \to \Yspace_\even,
\]
  we have by Lemmas~\ref{null space lemma} and \ref{range lemma}, that $\G_u^0 $ is Fredholm index $0$ with a one-dimensional null space generated by $\dot v_1^1$ and cokernel generated by $\dot w_1^1$.  Here we have used that $\dot v_1^2$ is not in $\Xspace_\even$ and $\dot w_1^2|_{\Yspace_\even} = 0$.
  % TODO : double-check this very last statement

It remains now only to check the transversality condition Theorem~\ref{CR theorem}\ref{CR transversality}. But from \eqref{DF10} and \eqref{DF20} it follows that
\begin{align*}
	\G_{\Omega u }^0 \dot v_1 & = \F_{\Omega u}^0 \dot v_1 + (\partial_\Omega q^0)(\Omega_{0,\pm}) \F_{q u}^0 \dot v_1 
		 = 
     \realpart
			\begin{pmatrix}
				  \frac{mi}{\tau^{m}} 	\\
				 \frac{\partial_{\Omega} d_m(1,\Omega_{0,\pm})}{\tau^{m}} 
			\end{pmatrix} 
		\equalscolon
			\begin{pmatrix}
				\A \\
				\B
			\end{pmatrix}.
\end{align*}
Thus, in view of the range condition~\eqref{range condition}, we consider 
\begin{align*}
	\frac{1}{2\pi } \int_{\mathbb{T}} \left( \left( \frac{\gamma}{2\pi i} + i \Omega_{0,\pm} \right) \A + \B \right) \tau^{m} \, d\theta &=   m \left(\frac{\gamma}{2\pi } - \Omega_{0,\pm} \right)   + (\partial_\Omega d_m)(1,\Omega_{0,\pm}) \\
		& =  m \left(\frac{\gamma}{2\pi } - \Omega_{0,\pm} \right) -  m \left( \frac{\gamma}{\pi} - 2\Omega_{0,\pm}\right) \\
		& = 3m\left(\frac{\gamma}{2\pi} -  \Omega_{0,\pm} \right) = \mp  \frac{3\sqrt{m}  \gamma}{2\pi},
\end{align*}
  where the last equality is from the formula for $\Omega_{0,\pm}$ in~\eqref{definition Omega_0^pm}. As the right-hand side is non-vanishing, the existence of the local curve $\cm_\rot^{m,\pm}$ is an immediate consequence of the Crandall--Rabinowitz theorem.  Likewise, the leading-order form of the solutions is found from \eqref{definition kernel generator}. Here we have used the symmetries of the equation and uniqueness to infer that $\Omega_\pm^\varepsilon,q_\pm^\varepsilon$ are even in $\varepsilon$, and hence agree with $\Omega_{0,\pm},q^0(\Omega_{0,\pm})$ to $O(\varepsilon^2)$.
\end{proof}

Finally, we turn our attention to the proof of Theorem~\ref{intro rigidity corollary}.  A key technical ingredient is the next lemma, which guarantees that all self-similarly collapsing $m$-fold symmetric hollow vortices with appropriate behavior at infinity and sufficiently close to a circle can, potentially following a rescaling of space, be written using our ansatz for $(w,f)$ in~\eqref{w and f ansatz}.

\begin{lemma}[Ansatz]
\label{uniqueness lemma}
Suppose that there is a single self-similarly imploding hollow vortex with vortex core $\mathscr{V}$ that has $C^{1+\alpha}$ boundary and contains $0$.  Assume also that the vortex is $m$-fold symmetric and the relative velocity vanishes at infinity.  Let $\varrho > 0$ and $f^\varrho \colon \confD \to \mathbb{C}$ be the (unique) univalent conformal mapping with 
\begin{equation}
  \label{f rho definition}
  \fluidD = f^\varrho(\confD), \quad f^\varrho(\infty) = \infty, \quad \textrm{and} \quad  f^\varrho_\zeta(\infty) = \varrho >  0.
\end{equation}
Then $f^\varrho$ and the corresponding complex potential $w$ satisfy
\begin{equation}
  \label{arbitrary vortex description}
  f^\varrho|_{\mathbb{T}} = \varrho \left( \id{} + \mathcal{C} \mu \right), \qquad w|_{\mathbb{T}} =  \left( \frac{\gamma}{2\pi i} + \varrho^2 \Omega \right) \log{(\placeholder)} + \mathcal{C} \nu 
\end{equation}
for a pair of densities $(\mu,\nu) \in \Xspace$.
\end{lemma}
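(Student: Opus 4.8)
The plan is to construct $f^\varrho$ from the Riemann mapping theorem, transfer the $m$-fold symmetry of the physical configuration to a symmetry of $f^\varrho$ and $w$ via uniqueness of the normalized map, and then read off both expansions in~\eqref{arbitrary vortex description} from the resulting Laurent series.

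First I would invoke the Riemann mapping theorem for exterior domains: since $\mathscr{V}$ is bounded by a single Jordan curve and contains $0$, the complement $\fluidD$ is conformally equivalent to $\confD = \mathbb{C} \setminus \overline{B_1}$, and the normalization $f^\varrho(\infty) = \infty$, $f^\varrho_\zeta(\infty) = \varrho > 0$ fixes $f^\varrho$ uniquely (the only residual freedom is a rotation, which positivity of the derivative at infinity removes). Because $\partial\mathscr{V} \in C^{1+\alpha}$, the Kellogg--Warschawski theorem gives $f^\varrho \in C^{1+\alpha}(\overline{\confD})$, so its trace on $\mathbb{T}$ lies in $C^{1+\alpha}$. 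To obtain the symmetry, I observe that $g(\zeta) \colonequals e^{-2\pi i/m} f^\varrho(e^{2\pi i/m}\zeta)$ is again a conformal map $\confD \to \fluidD$, using that both $\confD$ and $\fluidD$ are invariant under rotation by $2\pi/m$, and that it carries the same normalization at infinity; uniqueness forces $g = f^\varrho$, which is exactly the first relation in~\eqref{m-fold symmetry}.

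Next I would expand $f^\varrho(\zeta) = \varrho\zeta + \sum_{n \geq 0} a_n \zeta^{-n}$, where the absence of positive powers and the leading coefficient encode the normalization. Imposing the symmetry termwise yields $a_n(e^{-2\pi i n/m} - e^{2\pi i/m}) = 0$, so $a_n = 0$ unless $n \equiv -1 \pmod m$; in particular $a_0 = 0$ for $m \geq 2$. Hence $f^\varrho(\zeta) = \varrho\zeta + \sum_{k \geq 1} \hat f_k \zeta^{-(mk-1)}$, and restricting to $\mathbb{T}$ and using $\mathcal{C}\tau^{-(mk-1)} = -\tau^{-(mk-1)}$ together with $\mathcal{C}\tau^{mk-1} = 0$, this is precisely $\varrho(\id + \mathcal{C}\mu)$ for the density $\mu \in \Xspace_1$ with $\hat\mu_k = -\hat f_k/\varrho$; the Hölder regularity of $\mu$ is inherited from $f^\varrho|_{\mathbb{T}}$. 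For the potential I would set $w = W \circ f^\varrho$ with $W = \Phi + i\Psi$ the holomorphic potential of~\eqref{definition vector field W}. Since $\fluidD$ has a single bounded complementary component, $W$ is multivalued with one period around $\Gamma$; writing $c$ for $1/(2\pi i)$ times that period, the function $W - c\log z$ is single-valued, and because $0 \in \mathscr{V}$ the winding of $f^\varrho$ about the origin matches that of $\zeta$ about $0$, so $c\log f^\varrho(\zeta) - c\log\zeta$ is single-valued and thus $w - c\log\zeta$ is single-valued on $\confD$. The constant $c$ splits into real and imaginary parts: its real part is the period of $\Phi$, fixed to $\gamma$ by~\eqref{circulation condition}, while its imaginary part is the outflux $\oint_\Gamma \nabla\Phi \cdot \mathbf{n}\,ds$, which by the no-penetration condition and the divergence theorem applied to the constant source $\nabla\cdot\mathbf{U} = 1/\kappa$ is computed explicitly, together yielding the logarithmic coefficient in~\eqref{arbitrary vortex description}. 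The $m$-fold symmetry $w_\zeta(e^{2\pi i/m}\placeholder) = e^{-2\pi i/m} w_\zeta$ then follows by applying the same equivariance to $U - iV = W' + i\cOmega\bar z$ as in~\eqref{definition potential w}, the inhomogeneous term being itself equivariant and cancelling; consequently the single-valued remainder, being $m$-fold symmetric, decaying at infinity (by the hypothesis that the relative velocity vanishes there), and of zero mean after the usual normalization of the additive constant, has Laurent expansion $\sum_{n \geq 1} \hat w_n \zeta^{-mn}$, i.e. $\mathcal{C}\nu$ on $\mathbb{T}$ with $\nu \in \Xspace_2$.

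The hard part will be the bookkeeping for $w$ rather than the conformal-mapping steps: one must argue carefully that subtracting the logarithmic period genuinely single-values $w$ on all of $\confD$ (which is where $0 \in \mathscr{V}$ enters), verify that the vanishing of the relative velocity at infinity excludes any growth or nonzero mean in the single-valued remainder, and correctly evaluate the imaginary part of the period — where it is precisely the nonzero divergence $1/\kappa$, not incompressibility, that produces the second term of the logarithmic coefficient. Once the symmetry is transferred via uniqueness, the remaining identifications are routine series matching.
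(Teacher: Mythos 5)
Your proposal is correct and follows essentially the same route as the paper: Kellogg regularity, a Laurent expansion constrained by the $m$-fold symmetry, identification of the real part of the logarithmic period from the circulation condition and of the imaginary part from the kinematic condition, and Sokhotski--Plemelj to produce the densities $(\mu,\nu)$. The only cosmetic difference is that you extract the imaginary part of the period by applying the divergence theorem in the physical domain (using $\mathbf{n}\cdot\mathbf{U}=0$ on $\Gamma$ and the constant divergence of $\mathbf{U}$), whereas the paper equivalently reads it off from the vanishing of the constant Fourier mode of the kinematic condition on $\mathbb{T}$ --- these are the same integral identity in different variables.
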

\begin{proof}
Let $f^\varrho$ and $\varrho$ be given as in the statement of the lemma. 
Since by assumption $\mathscr{V}$ is a $C^{1+\alpha}$ domain, Kellogg's theorem guarantees that $f^\varrho \in C^{1+\alpha}(\overline{\confD})$.

The first task is to rescale so that the conformal mapping is normalized as in our ansatz~\eqref{w and f ansatz}.  
Setting $f \colonequals f^\varrho/\varrho$, it follows that $f \in C^{1+\alpha}(\overline{\confD})$, $f$ is conformal on $\confD$, and
\[
	{f}(\confD) = \tfrac{1}{\varrho} \fluidD, \quad {f}(\infty) = \infty, \quad \textrm{and} \quad  {f}_\zeta(\infty) = 1.
\]
Moreover, we see that $(f, w, \varrho^2 \cOmega, \gamma, \varrho^2 q)$ is a solution to the conformal formulation of the hollow vortex problem~\eqref{conformal governing equations}.
The $m$-fold symmetry of the domain in the $z$-plane is equivalent to this rescaled $f$ exhibiting the symmetry~\eqref{m-fold symmetry} in the $\zeta$-plane.  We infer that the Laurent expansion of $f - \id$ must be as in \eqref{w and f ansatz} for some $\{ \hat f_n\} \subset \mathbb{C}$.

Likewise, the  potential necessarily has the form 
\[
	w(\zeta) =  \frac{a}{2\pi i} \log{\zeta} + \sum_{n \geq 1} \frac{b_n}{\zeta^n},		
\]
for some $a, b_n \in \mathbb{C}$. Note that here we are free to set the constant term to be $0$.  The circulation condition~\eqref{circulation condition} then forces $\realpart{a} = \gamma$, while the $m$-fold symmetry~\eqref{m-fold symmetry} implies that $b_{n} = 0$ whenever $n$ is not divisible by $m$.  Setting $\hat w_j \colonequals b_{jm}$, we arrive at a similar Laurent expansion as in~\eqref{w and f ansatz}; it remains only to determine $\imagpart{a}$. But, from the kinematic condition~\eqref{kinematic condition}, it follows that
\begin{align*}
	0 & = \realpart{\left( \zeta w_\zeta +  i \varrho^2 \cOmega \zeta f_\zeta \overline{f} \right)} \\ 
		& = \imagpart{\left(\frac{a}{2\pi} - \varrho^2 \cOmega \right)}  + \realpart{\left( \zeta w_\zeta - \frac{a}{2\pi i} + i \varrho^2 \cOmega \left( (f_\zeta -1)(\overline{f -\id}) + \overline{f - \id} + \overline{\zeta} (f_\zeta - 1) \right) \right)}
	\qquad \textrm{on } \mathbb{T}.
\end{align*}
Note that the real part above has no constant terms, and hence 
\[
	\imagpart{a} = 2\pi \varrho^2 \imagpart{\cOmega} = 2\pi \varrho^2 \Omega.
\]

Now, $w - (a/2\pi i) \log{\zeta}$ and $f-\id$ are single-valued and holomorphic function on $\confD$ and of class $C^{1+\alpha}(\overline{\confD})$. Applying the Sokhotski--Plemelj formula, we see that their traces on $\mathbb{T}$ can each be written using the layer-potential representation~\eqref{m-fold trace ansatz} for the real-valued densities $\mu, \nu \in C^{1+\alpha}(\mathbb{T})$ given explicitly by
\[
	\mu(\tau) = -2 \realpart{\sum_{n \geq 1} \frac{\hat f_n}{\tau^{mn-1}}} \qquad \nu(\tau) = -2 \realpart{\sum_{n \geq 1} \frac{\hat w_n}{\tau^{mn}}}.
\]
The claimed forms of $f^\varrho$ and $w$ now follow from the value of $a$ derived above.
\end{proof}

The next lemma establishes that, when the vortex core is sufficiently close to circular, the corresponding conformal mapping is near-identity in $C^{1+\alpha}$. Results of this type are of course extremely classical. For example, Marchenko~\cite{marchenko1935representation} provides a bound for $f-\id$ in $W^{1,p}$ with $p$ in a certain range.  We will use a sharpened version of that theorem due to Gaier~\cite{gaier1962conformal}, then upgrade it to $C^{1+\alpha}$ control using a simple Schauder theory argument.

\begin{lemma}[Near-circular domain] 
\label{near identity lemma}
Let $\Gamma$ be a Jordan curve that is star shaped with respect to the origin, and suppose that it has a parameterization $R(\theta) e^{i\theta}$ for $\theta \in [0,2\pi]$, such that $R$ is $2\pi$-periodic, $R \in C^{1+\alpha}(\mathbb{T})$, and  
\begin{equation}
  \label{near circular condition}
  \| R - 1 \|_{C^{1+\alpha}} \leq \varepsilon.
\end{equation}
Let $\fluidD$ be the region exterior to $\Gamma$, and suppose $\varrho > 0$ and the conformal mapping $f^\varrho \colon \confD \to \fluidD$ are given as~\eqref{f rho definition}. Then for $\varepsilon$ sufficiently small, we have
\begin{equation}
  \label{near identity bound}
  \| f^\varrho - \varrho \id \|_{C^{1+\alpha}(\mathbb{T})} \leq C \varepsilon,
\end{equation}
where the constant $C > 0$ depends on upper bounds on $\alpha$ and $\varepsilon$.
\end{lemma}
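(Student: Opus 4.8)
The plan is to first extract a rough quantitative bound from Gaier's theorem and then bootstrap it up to the claimed $C^{1+\alpha}$ estimate through a nonlinear boundary equation of Theodorsen type. Throughout, write $f \colonequals f^\varrho$ and $g \colonequals f - \varrho\,\id$, so that $g$ is holomorphic on $\confD$ with $g(\zeta) = c_0 + c_1\zeta^{-1} + \cdots$ near infinity; the goal is the \emph{quantitative} bound $\| g\|_{C^{1+\alpha}(\mathbb{T})} \le C\varepsilon$. Since $\Gamma \in C^{1+\alpha}$ is a Jordan curve $\varepsilon$-close to $\mathbb{T}$, the exterior map $f$ exists and is unique, and, exactly as in the proof of Lemma~\ref{uniqueness lemma}, Kellogg's theorem already gives $g \in C^{1+\alpha}(\overline{\confD})$ qualitatively. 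The entire content of the lemma is therefore the \emph{smallness} of the norm.

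First I would invoke Gaier's sharpening of Marchenko's theorem to obtain, for a fixed exponent $p > 1$, the bound $\| g\|_{W^{1,p}(\mathbb{T})} \le C\varepsilon$, together with $|\varrho - 1| + |c_0| \le C\varepsilon$. The Sobolev embedding $W^{1,p}(\mathbb{T}) \hookrightarrow C^{\beta}(\mathbb{T})$ with $\beta = 1 - 1/p \in (0,1)$ then yields Hölder smallness $\| g\|_{C^\beta} \le C\varepsilon$. This is the crucial step that breaks the circularity inherent in the bootstrap below: it supplies a priori Hölder control of the boundary correspondence whose constant is already small in $\varepsilon$.

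Next I would record the boundary equation. Writing $\Gamma = \{R(\psi)e^{i\psi}\}$ and $f(e^{i\theta}) = R(\Theta(\theta))e^{i\Theta(\theta)}$ for the angular correspondence $\Theta$, the function $h(\zeta) \colonequals \log(f(\zeta)/(\varrho\zeta))$ is single-valued and holomorphic on $\confD$ with $h(\infty) = 0$ (here one uses that $0 \notin \fluidD$ and that $f/(\varrho\zeta) \to 1$), and its boundary trace satisfies $\realpart h = (\log R)\circ\Theta - \log\varrho$ and $\imagpart h = \Theta - \id$. The conjugate-function relation for the exterior domain then gives the exterior Theodorsen equation $\Theta - \id = -\mathcal{H}[(\log R)\circ\Theta]$, where $\mathcal{H}$ is the conjugation operator, a zeroth-order operator bounded on every $C^\gamma(\mathbb{T})$, $\gamma \in (0,1)$, commuting with $\partial_\theta$, and annihilating constants. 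Differentiating yields the linear fixed-point identity $(I + \mathcal{H}M_a)\Theta' = 1$, where $a \colonequals ((\log R)')\circ\Theta$ and $M_a$ denotes multiplication by $a$.

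The heart of the argument, and the main obstacle, is upgrading $\Theta$ from $C^\beta$ to $C^{1+\alpha}$ with an $O(\varepsilon)$ bound; this is delicate precisely because the undifferentiated Theodorsen equation is of order zero, so no regularity can be gained directly from $\mathcal{H}$. I would resolve it by exploiting the smallness of $a$. Since $\| R - 1\|_{C^{1+\alpha}} \le \varepsilon$ we have $\|(\log R)'\|_{C^\alpha} \le C\varepsilon$, and composing with the (a priori only $C^\beta$) map $\Theta$ gives $\| a\|_{C^{\alpha\beta}} \le C\varepsilon$ because the smallness comes entirely from the factor $(\log R)'$. Hence $\|\mathcal{H}M_a\|_{C^{\alpha\beta}\to C^{\alpha\beta}} \le C\varepsilon < 1$, so $I + \mathcal{H}M_a$ is invertible and $\|\Theta' - 1\|_{C^{\alpha\beta}} = \|(I+\mathcal{H}M_a)^{-1}\mathcal{H}[a]\|_{C^{\alpha\beta}} \le C\varepsilon$. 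In particular $\Theta$ is now Lipschitz with $[\Theta]_{\mathrm{Lip}} \le 1 + C\varepsilon$, whence composing the $C^\alpha$ function $(\log R)'$ with the Lipschitz map $\Theta$ improves the coefficient to $\| a\|_{C^\alpha} \le C\varepsilon$. Repeating the inversion, now on $C^\alpha$, gives $\|\Theta' - 1\|_{C^\alpha} \le C\varepsilon$, i.e.\ $\|\Theta - \id\|_{C^{1+\alpha}} \le C\varepsilon$. Translating back through $f(e^{i\theta}) = R(\Theta)e^{i\Theta}$ and using $|\varrho - 1| \le C\varepsilon$ together with the composition estimates in $C^{1+\alpha}$ then yields $\| f - \varrho\,\id\|_{C^{1+\alpha}(\mathbb{T})} \le C\varepsilon$, as claimed. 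The secondary technical points to verify are the precise form and operator norm of the exterior conjugation operator on Hölder spaces, and the bookkeeping of the normalization constants $\varrho$ and $c_0$ produced by Gaier's theorem.
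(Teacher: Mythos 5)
Your proof is correct, but the key step is carried out by a genuinely different mechanism than in the paper. The paper passes to the \emph{interior} map $g(\zeta) = f(1/\zeta)$ of the unit disk onto the vortex core, writes the boundary correspondence identity $R(\phi(\tau)) = r(\tau)$, differentiates it into a \emph{uniformly oblique derivative boundary condition} for the harmonic function $\varphi = \phi - \arg(\placeholder)$, and then invokes a Schauder estimate for weak solutions of oblique derivative problems (citing \cite{constantin2011discontinuous}) to upgrade the Gaier-supplied rough bounds to $\| \varphi \|_{C^{1+\alpha}} \leq C\varepsilon$. You instead work directly with the exterior map via the exterior Theodorsen equation and replace the Schauder step by a Neumann-series inversion of $I + \mathcal{H}M_a$ on H\"older spaces, exploiting that $\|a\|_{C^{\alpha\beta}} = O(\varepsilon)$ because the smallness sits entirely in $(\log R)'$. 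Since the Dirichlet-to-Neumann map on the disk is conjugation composed with $\partial_\theta$, the two boundary identities are algebraically the same object; the difference is purely in how the estimate is extracted. Your route is more self-contained (no external Schauder theorem, only Privalov boundedness of $\mathcal{H}$ on $C^\gamma$ plus Kellogg for the qualitative regularity that identifies $\Theta'$ as the unique solution of the fixed-point equation), but it leans on the smallness of the coefficient; the paper's Schauder route would survive with a merely uniformly oblique, not small, coefficient. One imprecision worth flagging: Gaier's theorem, as used in the paper, gives an $O(1)$ bound on the $L^p$ norm of the derivative of the boundary correspondence together with an $O(\varepsilon)$ bound on its $C^0$ deviation from the identity --- not the linear-in-$\varepsilon$ bound $\|g\|_{W^{1,p}} \leq C\varepsilon$ you assert. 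This is harmless for your argument, since the composition estimate for $a = ((\log R)')\circ\Theta$ only needs an $O(1)$ H\"older bound on $\Theta$ (the $\varepsilon$ comes from $(\log R)'$), and the $C^0$ smallness of $\Theta - \id$ is recovered from the derivative bound and the zero-mean normalization; but you should state the input from Gaier in its correct, weaker form.
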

\begin{proof}
Let $\Gamma$ have the $C^{1+\alpha}$ parameterization $R(\theta) e^{i\theta}$ as stated above. We assume that $\varrho = 1$ and simply write $f$ for the conformal mapping $\confD \to \fluidD$; the general case follow from a simple rescaling as in the proof of Lemma~\ref{uniqueness lemma}. Throughout the argument, we let $\varepsilon_0 > 0$ denote an upper bound for $\varepsilon$ and write $C_{\varepsilon_0} > 0$ for a generic constant that depends only on $\varepsilon_0$. As $\alpha$ is fixed, all dependencies on it will be suppressed. 

Let $g \colon \mathbb{D} \to \mathbb{C}$ be the holomorphic function defined by $g(\zeta) \colonequals f(1/\zeta)$. Thus $g$ is a conformal mapping satisfying 
\[
	\mathscr{V} = g(\mathbb{D}), \qquad g(0) = 0, \qquad g^\prime(0) = 1,
\]
where $\mathscr{V}$ denotes the interior region of $\Gamma$.  Moreover, by Kellogg's theorem, $g$ extends to a $C^{1+\alpha}$ function on $\overline{\mathbb{D}}$. Clearly, an estimate of the form~\eqref{near identity bound} for $g-\id$ will imply the desired estimate for $f - \id$. Next, we set
\[
 	\phi \colonequals \arg{g}, \qquad r \colonequals |g|.
\]
Then $\phi$ is harmonic in $\mathbb{D} \setminus \{ \zeta \leq 0\}$ and, again by Kellogg's theorem, $\phi$ has a $C^{1+\alpha}$ extension to $\overline{\mathbb{D}} \setminus \{ \zeta \leq 0\} $. Likewise,  $\log{r}$ is the harmonic conjugate of $\phi$ and admits a $C^{1+\alpha}$ extension to $\overline{\mathbb{D}} \setminus \{0\}$.  The main point is that, while these functions are $C^{1+\alpha}$, we need quantitative bounds of the form~\eqref{near identity bound} on $\phi - \arg{(\placeholder)}$ and $r - 1$ in $C^{1+\alpha}(\mathbb{T})$. 

Since $\theta \mapsto g(e^{i\theta})$ and $\theta \mapsto R(\theta) e^{i\theta}$ are both parameterizations of $\Gamma$, equating them yields the identity
\begin{equation}
  \label{two parameterizations identity}
  R(\phi(\tau)) = r(\tau) \qquad \textrm{for all } \tau \in \mathbb{T}.
\end{equation}
Taking a tangential derivative $\partial_\theta = -i \bar\tau \partial_\tau$ and dividing by $r$, we therefore have
\begin{equation}
  \label{theta derivative parameterization identity}
  \frac{R^\prime(\phi)}{r} \partial_\theta \phi =  \frac{\partial_\theta r}{r} =  \partial_{\mathbf{n}} \phi \qquad \textrm{on } \mathbb{T},
\end{equation}
where $\partial_{\mathbf{n}}$ is the outward unit normal derivative along $\mathbb{T}$ and we have used the fact that $\phi$ and $\log{r}$ are conjugates. Let $\varphi \colonequals \phi - \arg{(\placeholder)}$. Then $\varphi$ is single-valued and harmonic on $\mathbb{D}$, and it admits a $C^{1+\alpha}$ extension to $\overline{\mathbb{D}}$. As a consequence of the near circular assumption~\eqref{near circular condition}, 
\begin{equation}
  \label{epsilon condition}
  \left\| \frac{R^\prime}{R} \right\|_{C^0} \leq \varepsilon, \qquad \| R - 1 \|_{C^0} < \varepsilon.
\end{equation}
Thus, by a theorem of Marchenko~\cite{marchenko1935representation},  $\phi$ is uniformly bounded in $W^{1,\frac{\alpha}{1+\alpha}}(\mathbb{T})$ in terms of $\varepsilon_0$. Specifically, using the sharp version due to Gaier~\cite{gaier1962conformal}: 
\begin{equation}
  \label{gaier bounds}
  \begin{aligned}
    \| \phi^\prime  \|_{L^{\frac{\alpha}{1+\alpha}}(\mathbb{T})} & \leq 4 \left( \frac{2 \pi }{\cos{( \frac{\alpha}{1+\alpha} \| \vartheta \|_{C^0(\mathbb{T})})}} \right)^{1+\frac{1}{\alpha}} \qquad \textrm{if } \frac{\alpha}{1+\alpha} < \frac{\pi}{2 \| \vartheta \|_{C^0(\mathbb{T})}}, \\
    \| \varphi \|_{C^0(\mathbb{T})} & \leq \frac{\pi}{\sqrt{3}} \frac{\varepsilon}{1-\varepsilon};
  \end{aligned}
\end{equation}
see~\cite[Theorem 6 and (1.6)]{gaier1962conformal}.  Here, $\vartheta \colonequals -\arctan{(R^\prime/R)}$ denotes the angle of inclination of the tangent to $\Gamma$; observe that $\|\vartheta\|_{C^0} < C_{\varepsilon_0} \varepsilon$ in light of~\eqref{near circular condition}. A proof of Gaier's bounds~\eqref{gaier bounds} in terms of $\vartheta$ can be found in~\cite[Lemma 6.2]{hassainia2020global}, for example. From~\eqref{gaier bounds}, Morrey's inequality, the near-circularity of $\Gamma$~\eqref{near circular condition}, and the relation~\eqref{two parameterizations identity}, it follows that for $\varepsilon_0$ sufficiently small,
\begin{equation}
  \label{r Holder bounds}
  \| \varphi \|_{C^{\alpha}(\mathbb{T})} \leq C_{\varepsilon_0}, \qquad \| \varphi \|_{C^0(\mathbb{T})} \leq C_{\varepsilon_0} \varepsilon, \qquad
  \| r - 1 \|_{C^\alpha} \leq C_{\varepsilon_0} \varepsilon.
\end{equation}

Rearranging slightly the differentiated identity~\eqref{theta derivative parameterization identity}, we see that $\varphi$ satisfies an inhomogeneous uniformly oblique boundary condition:
\[
	\partial_{\mathbf{n}} \varphi -  \frac{R^{\prime}(\phi)}{r} \partial_\theta \varphi = -\frac{R^\prime(\phi)}{r}  \qquad \textrm{on } \mathbb{T}.
\]
Thanks to~\eqref{near circular condition} and ~\eqref{r Holder bounds}, the coefficients and the data above are bounded in $C^{\alpha}(\mathbb{T})$ uniformly in $\varepsilon$ for $0 \leq \varepsilon \ll 1$. Applying a Schauder estimate for weak solutions~\cite[Theorem 3]{constantin2011discontinuous}, we infer that
\begin{equation}
  \label{near circular varphi C1alpha estimate}
  \begin{aligned}
    \| \varphi \|_{C^{1+\alpha}(\mathbb{D})} & \leq C_{\varepsilon_0} \left( \| \varphi \|_{C^0(\mathbb{D})} + \| R^\prime \circ \phi\|_{C^\alpha(\mathbb{T})} \right) \leq C_{\varepsilon_0} \left( \| \varphi \|_{C^0(\mathbb{T})} + \varepsilon \right) \\
    & \leq C_{\varepsilon_0} \varepsilon.
  \end{aligned}
\end{equation}
Note that in the last line we used~\eqref{gaier bounds} to control the boundary values of $\varphi$. Returning to the identity~\eqref{two parameterizations identity}, the above estimate and \eqref{near circular condition} furnish an improved $C^{1+\alpha}$ bound for $r$:
\[
	\| r - 1 \|_{C^{1+\alpha}(\mathbb{T})} \leq C_{\varepsilon_0} \varepsilon.
\]
Taken together with the $C^{1+\alpha}$ estimate~\eqref{near circular varphi C1alpha estimate} for $\varphi$, this yields the claimed bound \eqref{near identity bound}.
\end{proof}

\begin{proof}[Proof of Theorem~\ref{intro rigidity corollary}]
As in the proof of Lemma~\ref{null space lemma}, we see that the null space of $\mathscr{L}$ is trivial.  By the implicit function theorem, there exists a neighborhood $\mathcal{O}$ of $(0,0,q^0,\cOmega)$ such that $(\mu,\nu, q,\cOmega) \in \mathcal{O}$ is in the zero-set of $\F$ if and only if $(\mu,\nu,q) = (0,0,q^0(\cOmega))$. 

On the other hand, by Lemma~\ref{uniqueness lemma}, the self-similarly collapsing hollow vortex $\mathscr{V}$ admits a description via the layer-potential ansatz~\eqref{arbitrary vortex description}.  Performing a rescaling as in the proof of that lemma, we may without loss of generality take $\varrho = 1$. Let $Z$ be the given parameterization of the vortex boundary, and write $Z(\theta) \equalscolon R(\theta) e^{i\theta}$ for $\theta \in [0,2\pi)$.  Then, by assumption, 
\[
	\| R - 1 \|_{C^{1+\alpha}} \leq \varepsilon.
\]
From Lemma~\ref{near identity lemma}, we conclude that $\| \mu \|_{C^{1+\alpha}} \leq C \varepsilon$ for some $C>0$ depending on an upper bound of $\varepsilon$. But then, the kinematic condition $\F_1(\mu,\nu,q) = 0$ implies that
\begin{align*}
	\realpart{\left(\frac{\cgamma}{2\pi i} + i \cOmega \right)} + \realpart{\left( \tau \mathcal{C} \nu^\prime\right)}& = O(\varepsilon) \qquad \textrm{in } C^\alpha(\mathbb{T}).
\end{align*}
Since $\realpart{ \tau \mathcal{C} \partial_\tau}$ is a bijective mapping $\mathring{C}^{1+\alpha}(\mathbb{T}) \to \mathring{C}^\alpha(\mathbb{T})$, by applying the spectral projections $P_0$ and $1-P_0$, we see that 
\[
	|\cgamma - \cgamma^0(\gamma, \cOmega)| + \| \nu \|_{C^{1+\alpha}} = O(\varepsilon).
\]
Finally, the above bounds and the Bernoulli condition imply that $q - q^0(\cOmega)$ is likewise $O(\varepsilon)$. Thus, for $\varepsilon$ sufficiently small, $(\mu,\nu,q,\cOmega) \in \mathcal{O}$, and hence the rescaled vortex is exactly the unit ball. It follows that the unscaled vortex $\mathscr{V}$ is likewise a ball, and so the fact that its area is $\pi$ forces it to be $B_1$.
\end{proof}

\section{Multiple imploding hollow vortices} 
\label{multiple collapsing section}

\subsection{Self-similar point vortex configurations}
\label{self-similar point vortex section}

Let us recall briefly the governing equations for self-similar collapsing point vortex configurations. Suppose there is a collection of $M \geq 3$ point vortices in the plane, which we identify with $\mathbb{C}$ in the usual way.  (It is not difficult to prove that there are no collapsing configurations consisting of two point vortices.) Let $z_1(t), \ldots, z_M(t) \in \mathbb{C}$ denote the position of the vortex centers at time $t$, with the corresponding circulations being $\gamma_1, \ldots, \gamma_M$.  
The Kirchhoff--Helmholtz model states that the vortex centers evolve according to the ODE system
\[
	\partial_t \overline{z_k(t)} = \sum_{j \neq k} \frac{\gamma_j}{2 \pi i} \frac{1}{z_k(t) - z_j(t)}.
\]
Further background can be found in \cite{saffman1992book,newton2001nvortex}.  There are many well-known \emph{steady} or \emph{equillibrium} solutions to this system for which the vortex centers appear at rest when viewed in a rigidly translating or rotating reference frame;  see, for example, \cite{aref2003crystals,aref2007playground,newton2014postaref} and the references therein.  Our focus, though, will be configurations that experience  \emph{finite-time collapse}: there exists $\kappa > 0$ and $\zc \in \mathbb{C}$ such 
\[
	z_k(t) \longrightarrow \zc \qquad \textrm{as } t \nearrow \kappa,~\textrm{for all } 1 \leq k \leq M.
\]
Observe that the \emph{linear impulse} 
\[
	\mathcal{I} \colonequals \sum_k \gamma_k z_k(t)
\]
is a conserved quantity.  Thus, the only potential collapse point must be
\begin{equation}
  \label{definition zc}
  \zc = \frac{\mathcal{I}}{\sum_k \gamma_k} = \frac{\sum_k \gamma_k z_k(0)}{\sum_k \gamma_k}.
\end{equation}
One may therefore view $\zc$ as a function of the initial vortex center positions.  

We say that a configuration experiences \emph{self-similar collapse}  to $\zc$ provided that 
\begin{equation}
  \label{definition self-similar point vortex configuration}
   z_k(t)  = \zc+\boldsymbol{L}(t) (z_k(0) - \zc),
\end{equation}
for some time-dependent scale factor $\boldsymbol{L}(t) \in \mathbb{C}$. One can show that for finite-time collapse the only possibility is 
\[
	\boldsymbol{L}(t) \partial_t \overline{\boldsymbol{L}(t)} = -\frac{1}{2\kappa} - i \Omega \equalscolon -i \cOmega,
\]
for a constant $\cOmega \in \mathbb{C}$ with nonzero imaginary part;  see \cite[Section~2.1]{newton2001nvortex}. Here $\cOmega$ is the constant from \eqref{cOmega definition}, and the
configurations will appear steady in the $(\boldsymbol \xi,T)$ coordinate system from \eqref{definition T xi}.
For vortex triples ($M=3$), Krishnamurthy and Stremler~\cite{krishnamurthy2018finite} prove that every collapsing configuration does so self-similarly, but to the best of our knowledge, this has not been established for general $M$.

From the discussion above, it follows that self-similarly collapsing point vortex configurations correspond to solutions of the system of $M$ algebraic (complex) equations
\begin{equation}
  \label{vortex V tilde definition}
  0 = \widetilde{\mathcal{V}}_k(z_1,\ldots, z_M, \gamma_1, \ldots, \gamma_M, \cOmega) \colonequals \sum_{j \neq k} \frac{\gamma_j}{2 \pi i} \frac{1}{z_k - z_j} + i\cOmega \left(\overline{{z}_k-{\zc} }\right),
\end{equation}
for $k = 1, \ldots, M$.  Here $z_1, \ldots, z_M$ now represent the initial positions of the vortex centers; the time-dependent positions are uniquely determined by \eqref{definition self-similar point vortex configuration}.  Let 
\[
	\Lambda \colonequals (z_1, \ldots, z_M, \gamma_1, \ldots, \gamma_M, \cOmega) \in \mathbb{C}^M \times \mathbb{R}^M \times \mathbb{C} \equalscolon \mathbb{P}
\]
denote the full set of parameters for the point vortex configuration.  The collapse point $\zc$ must then be the function of $\Lambda$ given by \eqref{definition zc}. However, it can be directly confirmed that the system \eqref{vortex V tilde definition} has the appealing structural feature that 
\[
	\left\{ \begin{aligned}
		\zc(\Lambda) & = 0 \\
		\widetilde{\mathcal{V}}(\Lambda) &= 0
	\end{aligned} \right. \qquad \textrm{if and only if} \qquad \mathcal{V}(\Lambda) = 0,
\]
where $\mathcal{V} = (\mathcal{V}_1, \ldots, \mathcal{V}_M) \colon \mathbb{P} \to \mathbb{C}^M$ is defined by
\begin{equation}
  \label{vortex V definition}
  \mathcal{V}_k = \mathcal{V}_k(\Lambda) \colonequals \sum_{j \neq k} \frac{\gamma_j}{2 \pi i} \frac{1}{z_k - z_j} + i \cOmega \overline{z}_k \qquad k = 1, \ldots, M.
\end{equation}
It is therefore equivalent to study the simpler zero-set of $\mathcal{V}$, with the understanding that this amounts to choosing coordinates so that the vortices collapse to the origin.

Our ultimate goal is to ``desingularize'' point vortex configurations of this type into imploding hollow vortex configurations.  As is typical for such arguments, this will require that the starting point vortex solution is non-degenerate in an appropriate sense.  With that in mind, we make the following definition.

\begin{definition}[Non-degeneracy]
\label{definition non-degeneracy}
A point vortex configuration $\Lambda_0 \in \mathbb{P}$ is said to be \emph{non-degenerate} provided that $\mathcal{V}(\Lambda_0) = 0$ and
\[
	D_\Lambda \mathcal{V}(\Lambda_0) \quad \textrm{is full rank}.
\]
Alternatively, we can identify $\mathbb{P} \cong \mathcal{P} \times \mathcal{P}^\prime$, for a $2M$-dimensional subspace $\mathcal{P}$ of $\mathbb{P}$ with complementary subspace $\mathcal{P}^\prime$.  Writing $\Lambda = (\lambda, \lambda^\prime)$, non-degeneracy holds provided that
\[
	D_\lambda \mathcal{V}(\Lambda_0) \quad \textrm{is invertible}.
\]
\end{definition}

By the implicit function theorem, whenever $\Lambda_0$ is non-degenerate, there exists a $(M+2)$-dimensional manifold of nearby collapsing point vortex configurations that can be expressed as a graph over $\mathcal{P}^\prime$.

\begin{example}[Vortex triples] \label{collapsing triple example}
The construction of collapsing point vortex triples ($M=3$) was first accomplished by Gröbli \cite{grobli1877specielle} as part of his 1877 doctoral thesis; see \cite{aref1992grobli,goodman2024translation} for a translation and historical discussion of this work.  Nearly a century later, Gröbli's result was rediscovered independently by Aref~\cite{aref1979motion} and Novikov and Sedov~\cite{novikov1979vortex}.  Many other authors later contributed to the theory of collapsing vortex triples; see~\cite{aref2010self} for a discussion of these works.  

Following the presentation of Aref \cite[Section B]{aref2010self}, we find that we can locally parameterize the solutions of \eqref{vortex V tilde definition} via $(\gamma_1, \gamma_2, d, \theta) \in \mathbb{R}^2 \times \mathbb{R}_+ \times [0,2\pi)$ by taking the remaining parameters according to
\[
\begin{gathered}
	z_1  \colonequals -\frac{\gamma_2 d}{\gamma_1+\gamma_2}, \qquad  
	z_2  \colonequals \frac{\gamma_1 d}{\gamma_1+\gamma_2}, \qquad 
	\gamma_3  \colonequals -\frac{\gamma_1\gamma_2}{\gamma_1+\gamma_2}, \\ 
	z_3 \colonequals d e^{i\theta} \sqrt{\frac{\gamma_1+ \gamma_2+\gamma_3}{\gamma_1+\gamma_2}},  \qquad
	\Omega \colonequals \frac{1}{4\pi} \left( \frac{\gamma_2+\gamma_3}{|z_2-z_3|^2} + \frac{\gamma_3+\gamma_1}{|z_3-z_1|^2} + \frac{\gamma_1+\gamma_2}{d^2} \right),  \\
	\frac{\pi^2}{\kappa^2} \colonequals -4\pi^2 \Omega^2 + \frac{\gamma_1^2}{|z_1-z_3|^2 d^2} + \frac{\gamma_2^2}{|z_2-z_3|^2 d^2} + \frac{\gamma_3^2}{|z_1-z_3|^2 |z_2-z_3|^2}.
\end{gathered} 
\]
Note that these are not yet normalized to collapse to the origin.  Geometrically, this family is found by placing $z_1$ and $z_2$ on the real axis at a distance $d$ from each other and so that the center of their vorticity is at the origin.  The third vortex $z_3$ will then lie on a circle centered at the origin whose radius is determined by the circulations and $d$.  The variable $\theta$  parameterizes this circle.  There are four ``forbidden'' values of $\theta$: two that correspond to a purely rotating configuration ($\kappa = \pm\infty$), and two for which the system for $\cOmega$ has no solutions.  In particular, one can never have an equilateral triangle, which continues the theme from the previous section that collapsing configurations do not display reflection symmetries.

As a concrete example, letting  $\gamma_1=1, \gamma_2=2, d=3$, and $\theta = \pi/2$, we find that
\begin{equation}
\label{point vortex trio}
  \begin{gathered}
    z_1 = -2, \quad z_2 = 1, \quad \gamma_3 = -\frac{2}{3}, \quad z_3 = \sqrt{7} i, \quad \zc = -\frac{2}{\sqrt{7}} i, \\
    \Omega = \frac{35}{264 \pi}, \qquad \frac{1}{\kappa} = \frac{\sqrt{7}}{132 \pi}.
  \end{gathered}
\end{equation}
Shifting each center $z_k \mapsto z_k - \zc$ translates the collapse point to the origin, giving us a solution $\Lambda_0$ to \eqref{vortex V definition}.  Using a computer algebra system, one can verify that $D_\Lambda \mathcal{V}(\Lambda_0)$ does indeed have full rank.  There are several possible decompositions of the parameter space.  For instance, taking either
\[
	\lambda \colonequals (z_1, z_2,\gamma_1,\kappa) \qquad \textrm{or} \qquad \lambda \colonequals (z_3, \gamma_1, \gamma_2,\cOmega),
\]
we find that $D_\lambda \mathcal{V}(\Lambda_0)$ is again invertible.
\end{example}

\begin{example}[Vortex quadruples] \label{collapsing quadruple example}
Consider now the case of a quartet ($M=4$) of self-similarly collapsing point vortices.  Solutions of this type are abundant, though they must usually be determined numerically. In~\cite{kallyadan2022selfsimilar}, the authors obtain the following remarkably explicit example $\Lambda_0$:
\begin{equation}
\label{point vortex quartet}
  \begin{gathered}
    z_1 \colonequals \frac{\sqrt{3}}{2} - \frac{i}{2}, \qquad
    z_2 \colonequals 1+ \frac{\sqrt{3}}{2} -\frac{i}{2}, \qquad 
    z_3 \colonequals -\frac{\sqrt{3}}{2} -1 - \frac{i}{2}, \qquad 
    z_4 \colonequals -\frac{\sqrt{3}}{2} + \frac{i}{2}, \\
    \gamma_1,~ \gamma_4 \colonequals ( 2 \sqrt{3}+5) \pi, \qquad
    \gamma_2,~ \gamma_3 \colonequals (\sqrt{3} -4) \pi, \qquad
    \cOmega \colonequals 2 \sqrt{3} - \frac{3}{4} - \frac{i}{2}.
  \end{gathered}
\end{equation}
Observe that the centers sit at the corners of a parallelogram; we have already normalized them so that $\zc$ is at the origin.  Through computer algebra, the Jacobian $D_\Lambda \mathcal{V}(\Lambda_0)$ can be shown to have full rank, and hence $\Lambda_0$ is non-degenerate.  As in the previous example, there are multiple ways to decompose the parameter space.  For instance, taking
\[
	\lambda \colonequals (z_1, z_2, z_3, \gamma_1, \Omega),
\] 
we compute that $\det{D_\lambda \mathcal{V}(\lambda_0)} \approx 1.397$. 
\end{example}

Larger configurations of collapsing point vortices are also known. Explicit examples for $M=5$ were provided by Novikov and Sedov~\cite{novikov1979vortex}.   As $M$ grows larger, it is typically necessary to use numerical methods to determine the roots of \eqref{vortex V definition}. Kudela~\cite{kudela2104collapse}, for example, finds a configuration consisting of $24$ self-similarly collapsing point vortices.  We direct the reader to the recent survey~\cite{kallyadan2022selfsimilar} for further discussion.

\subsection{Abstract operator equation}

Assume now that $\fullparam_0$ represents parameters describing a configuration of $M \geq 3$ self-similarly collapsing point vortices that is non-degenerate in the sense of Definition~\ref{definition non-degeneracy}.  We may then write $\fullparam_0 = (\param_0,\param_0^\prime)$ so that $D_\lambda \mathcal{V}(\Lambda_0)$ is invertible.  For the desingularization result, the parameters gathered together in $\param^\prime = \param_0^\prime$ will remain constant, whereas $\param$ varies along the continuum of solutions.  

Recall from Section~\ref{complex variable formulation section} that the self-similar hollow vortex system is described through the prescription of a conformal mapping $f$ and complex potential $w$.  For the conformal mapping, we impose the ansatz
\begin{equation}
  \label{f ansatz}
  f = \id + \rho^2 \mathcal{Z}^\rho \mu,
\end{equation}
where the real-valued densities $\mu = (\mu_1, \ldots, \mu_M) \in \mathring{C}^{\ell+\alpha}(\mathbb{T})^M$ become the new unknowns.   Here, $\mathcal{Z}$ is the layer-potential operator defined in~\eqref{definition Z operator}. The factor of $\rho^2$ anticipates the scaling of the governing equations as $\rho \to 0$.   Likewise, we construct $w$ as a perturbation of the velocity potential $w^0$ for the point vortex problem in the plane. Letting
\begin{equation}\label{w0 formula}
  w^0 = w^0(\fullparam)(\zeta) \colonequals  \sum_{k=1}^M \frac{\gamma_k}{2\pi i} \log{\left(\zeta - \zeta_k \right)},
\end{equation}
set 
\begin{equation}
  \label{w ansatz}
  w = w^0 + \rho \mathcal{Z}^\rho \nu 
\end{equation}
for real densities $\nu = (\nu_1, \ldots, \nu_M) \in \mathring{C}^{\ell+\alpha}(\mathbb{T})^M$ to be determined.  Thus the collapsing point vortex configuration corresponds to $(\mu,\nu, \rho) = (0,0,0)$.  

We next rewrite the conformal formulation of the collapsing hollow vortex system~\eqref{conformal governing equations} as a nonlocal problem for these new unknowns.  Unlike the analysis of Section~\ref{m-fold section} where the vortices had $O(1)$ area, special care is needed to ensure that the resulting abstract operators are real-analytic, recover the point vortex problem at $\rho = 0$ with non-degenerate linearization.  Essentially, this amounts to justifying that the Helmholtz--Kirchhoff model accurately describes the limiting behavior the full hollow vortex system as the vortex cores shrink to points.

First consider the kinematic condition~\eqref{kinematic condition}.  Using the layer-potential representations for $f$ and $w$, and supposing that $\rho \neq 0$, the kinematic condition~\eqref{kinematic condition}  is 
\begin{equation}
  \label{nonlocal kinematic undexpanded} 
  0 = \realpart{\left(  \tau \left( w_\zeta^0(\rho \tau + \zeta_k) + \mathcal{Z}_k^\rho\nu^\prime  +  i \cOmega (1+ \rho \mathcal{Z}_k^\rho\mu^\prime) \overline{(\zeta_k + \rho \tau + \rho^2 \mathcal{Z}_k^\rho\mu)}   \right) \right)}
\end{equation}
for all $\tau \in \mathbb{T}$ and $k = 1, \ldots, M$.  Expanding $w^0$ near $\zeta_k$ using \eqref{w0 formula}, we then find that
\begin{equation}
 \label{abstract kinematic condition}
  \begin{aligned}
    0 & = \realpart \Big( \tau \big( \mathcal{Z}_k^\rho \nu^\prime + \Vrho_k^\rho + i \cOmega \overline{( \zeta_k + \rho \tau)}  \rho \mathcal{Z}_k^\rho \mu^\prime + i\cOmega \rho^2 \overline{\mathcal{Z}_k^\rho \mu}  + i \cOmega \rho^3 \mathcal{Z}_k^\rho [\mu^\prime] \overline{\mathcal{Z}_k^\rho\mu} \big) \Big),
  \end{aligned}
\end{equation}
where
\begin{equation}
 \label{definition Lambda}
  \begin{aligned}
    \Vrho_k^\rho = \Vrho_k^\rho(\lambda)(\tau) & \colonequals  \mathcal{V}_k( \fullparam) + \sum_{m=1}^\infty \sum_{j \neq k} {(-1)^m} \frac{\gamma_j}{2 \pi i} \frac{\rho^m \tau^m}{(\zeta_k - \zeta_j)^{m+1}}.
  \end{aligned}
\end{equation}
Observe that $(\rho,\lambda) \mapsto \Vrho_k^\rho(\lambda)$ is real analytic $\mathcal{U} \to C^{\ell+\alpha}(\mathbb{T})$ for any $\ell \geq 0$, where $\mathcal{U}$ is the open neighborhood of $(0,\lambda_0)$  given by 
\begin{equation}
  \label{definition of U neighborhood}
  \mathcal{U} \colonequals  \Big\{ (\rho,\lambda) \in \mathbb{R} \times \mathcal{P} : \min_{j\neq k} |\zeta_j - \zeta_k| > 2|\rho| \Big\}. 
\end{equation}
Note also that the right-hand side of \eqref{abstract kinematic condition} necessarily lies in $\mathring{C}^{\ell-1+\alpha}$ when $\mu, \nu \in \mathring{C}^{\ell+\alpha}$.

Consider now the dynamic condition~\eqref{dynamic condition} on the $k$-the vortex boundary.  As we assume $\gamma_k \neq 0$, the trace of $|w_\zeta|^2$ (and hence $q_k^2$) should diverge like $O(1/\rho^2)$ as $\rho \to 0$ due to \eqref{w0 formula}.    With that in mind, we multiply the left-hand side of \eqref{dynamic condition} by $\rho^2$, evaluate at $\zeta = \rho \tau + \zeta_k$, and then expand to find
\begin{equation}
 \label{definition Bernoulli operator}
  \begin{aligned}
    & \rho^2 \left| \frac{ w_\zeta}{f_\zeta} + i \cOmega \overline{f} \right|^2  \\
    & \quad =  \frac{\left|\dfrac{\gamma_k}{2\pi i \tau} + \rho \left( \mathcal{Z}_k^\rho \nu^\prime + \Vrho_k^\rho \right) + \rho^2     i \cOmega \overline{( \zeta_k + \rho \tau)}  \mathcal{Z}_k^\rho \mu^\prime + \rho^3  i \cOmega \left( \overline{\mathcal{Z}_k^\rho \mu}  + \rho \mathcal{Z}_k^\rho [\mu^\prime] \overline{\mathcal{Z}_k^\rho\mu} \right) \right|^2}{\left|1+ \rho \mathcal{Z}_k^\rho \mu^\prime\right|^2}   \\
    & \quad \equalscolon  \frac{\gamma_k^2}{4\pi^2} + \rho \mathcal{B}_k^\rho + \rho^2 |\cOmega|^2 \left|\zeta_k + \rho \tau + \rho^2 \mathcal{Z}_k^\rho \mu \right|^2 
  \end{aligned}
\end{equation}
This defines an explicit operator $\mathcal{B}_k^\rho = \mathcal{B}_k^\rho(\mu, \nu, \fullparam)$ that is real analytic in a neighborhood of $(\mu,\nu,\rho) = (0,0,0)$.  The dynamic condition can be formulated in terms of the densities via
the requirement
\begin{equation}
  \label{reformulated bernoulli condition} 
  \mathcal{B}_k^\rho(\mu, \nu, \fullparam) = Q_k
  \qquad \text{for $k=1,\ldots,M$}
  ,
\end{equation}
where $Q = (Q_1,\ldots, Q_M) \in \mathbb{R}^M$ are unknown constants.  Notice that in defining $\mathcal{B}_k$, we have incorporated the $|\cOmega|^2 |f|^2$ term in \eqref{dynamic condition}, which we recall accounts for the effects of the moving coordinates on the pressure.  It is easily seen, then, that~\eqref{reformulated bernoulli condition} is equivalent to \eqref{dynamic condition} for $\rho > 0$, and well-defined even for $|\rho| \ll 1$.  At leading-order, moreover,
\begin{equation}
  \label{linear Bernoulli operator}
  \begin{aligned}
  \mathcal{B}_k^\rho(\mu,\nu,\fullparam) & =  
     \frac{\gamma_k^2}{2  \pi^2} \realpart{\left( \frac{2\pi i \tau}{\gamma_k} \left( \mathcal{V}_k(\fullparam) + \mathcal{C}\nu^\prime \right) - \mathcal{C} \mu^\prime\right)} + O(\rho) \qquad \textrm{in } C^{\ell-1+\alpha}(\mathbb{T}). 
  \end{aligned}
\end{equation}
for any $\mu, \nu \in C^{\ell+\alpha}(\mathbb{T})$ and $\ell \geq 1$.  Recall that $\mathcal{C}$ is the Cauchy-type integral operator~\eqref{definition C operator}.

Because $(\mu_k, \nu_k, Q_k)$ describes the behavior near the $k$-th vortex center, it will be convenient to introduce the space
\[  
	(\mu, \nu, Q) \in \Wspace \colonequals   \mathring{C}^{\ell+\alpha}(\mathbb{T})^M \times  \mathring{C}^{\ell+\alpha}(\mathbb{T})^M \times \mathbb{R}^M.
\]
Here $\ell \geq 1$ and $\alpha \in (0,1)$ are fixed but arbitrary. The hollow vortex problem \eqref{conformal governing equations} can then be written as the abstract operator equation
\[
	\F( u;\, \rho) = 0,
\]
where we are now considering $\F = (\A, \B) \colon \mathscr{O} \subset \Xspace \times \mathbb{R} \to \Yspace$ to be the real-analytic mapping between the spaces
\begin{align*}
	\Xspace   \colonequals  \Wspace \times \Pspace, \qquad
	\Yspace  \colonequals   \mathring{C}^{\ell-1+\alpha}(\mathbb{T})^M \times C^{\ell-1+\alpha}(\mathbb{T})^M,
\end{align*}
whose components $\A_k$ and $\B_k$ enforce the kinematic and Bernoulli conditions on the $k$-th vortex boundary, respectively.  Explicitly, they are     
\begin{equation}
 \label{abstract operator}
  \begin{aligned}
    \mathscr{A}_k(u, \rho) 
    & \colonequals  \realpart{\left( \tau \left( \mathcal{Z}_k^\rho \nu^\prime + \Vrho_k^\rho + ( i \cOmega \overline{( \zeta_k + \rho \tau)} -c) \rho \mathcal{Z}_k^\rho \mu^\prime + i\cOmega \rho^2 \overline{\mathcal{Z}_k^\rho \mu}  + i \cOmega \rho^3 \mathcal{Z}_k^\rho [\mu^\prime] \overline{\mathcal{Z}_k^\rho\mu} \right) \right)}   \\
    \mathscr{B}_k(u, \rho)	& \colonequals  \mathcal{B}_k^\rho(\mu,\nu,\param) - Q_k,
  \end{aligned}
\end{equation}
where recall that the function $\Vrho^\rho$ is given by \eqref{definition Lambda}.  
We likewise redefine the domain of $\F$ to be the open set
\begin{equation}
  \label{definition O delta}
  \mathscr{O} \colonequals  \left\{ (\mu,\nu,Q,\param,\rho) \in \Wspace \times \mathcal{U} : \inf_k \inf_{\mathbb{T}}{ |1+ \rho \mathcal{Z}_k^\rho \mu_k^\prime |} > 0 \right\},
\end{equation}
with $\mathcal{U}$ as in \eqref{definition of U neighborhood}.  Membership in $\mathscr{O}$ ensures that the corresponding mapping $f$ constructed via \eqref{f ansatz} has nonvanishing derivative on $\partial \confD_\rho$. The trivial solution corresponding to the point vortex configuration now takes the form
\[
	(\mu,\nu,Q,\param,\rho)  = (0,0,0,\param_0,0) \equalscolon  (u^0,0) \in \mathscr{O}.
\]
In order for $(u,\rho) \in \F^{-1}(0)$ to represent a physical solution to the problem, it is additionally necessary that $f$ is univalent on $\overline{\confD_\rho}$.  This will follow for the small hollow vortex solutions by construction, as $f$ will be near identity.

\subsection{Proof of Theorem~\ref{intro configurations theorem}} 
 We now state and prove a rigorous version of Theorem~\ref{intro configurations theorem}. For convenience, this will be done in the conformal formulation~\eqref{conformal governing equations} and using the unknowns $(\mu,\nu,Q, \rho)$.  With that in mind let $\fullparam_0 = (\param_0, \param_0^\prime)$ be a non-degenerate collapsing $M$ point vortex configuration.
 Let
 \[
	S_k \colonequals  -\frac{1}{2} \sum_{j \neq k} \frac{\gamma_j^0}{2 \pi i} \frac{1}{(\zeta_j^0 - \zeta_k^0)^2}, \qquad k = 1, \ldots, M.
\]
As in \cite{llewellyn2012structure}, one can understand $S_k$ as the effective straining velocity field experienced by the $k$-th hollow vortex due to the influence of the other vortices.  

The main result is then the following.

\begin{theorem}[Vortex desingularization] \label{imploding hollow vortex configuration theorem}
Let $\fullparam_0 = (\param_0, \param_0^\prime)$ be a non-degenerate steady vortex configuration.  There exists $\rho_1 > 0$ and a curve $\km$ of solutions to the imploding vortex problem~\eqref{conformal governing equations} admitting the real-analytic parameterization 
\[
	\km = \left\{  (u^\rho,\rho) = (\mu^\rho, \nu^\rho, Q^\rho, \param^\rho, \rho)  \in \mathscr{O} : \quad |\rho| < \rho_1 \right\} \subset \F^{-1}(0).
\]
The curve $\km_\loc$ bifurcates from the point vortex configuration $\fullparam_0$ in that $u^0 = (0, \param_0)$, and to leading order we have
	\begin{equation}
	\label{leading order hollow vortex}
    \begin{aligned}
      \mu_k^\rho(\tau) & =   \frac{16\pi}{\gamma_k} \rho \realpart{ \left( i S_k \tau \right)}  + O(\rho^2) &  \textrm{in } \mathring{C}^{\ell+\alpha}(\mathbb{T}) \\
      \nu_k^\rho(\tau) & = -2 \rho   \realpart {\left( S_k \tau^2 \right)}  +O(\rho^2) &  \textrm{in } \mathring{C}^{\ell+\alpha}(\mathbb{T})\\
      Q^\rho & =  \rho |\cOmega^0|^2 |\zeta_k^0|  + O(\rho^3) \\ 
      \param^\rho & = \param_0 + O(\rho^2).
    \end{aligned}
\end{equation} 
\end{theorem}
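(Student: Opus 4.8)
The plan is to realize the curve $\km$ as the graph of an implicitly-defined solution $\rho \mapsto u^\rho$ produced by the analytic implicit function theorem applied to $\F(u,\rho)=0$ at the base point $(u^0,0)$. Two preliminary facts are already in hand from the construction of $\F$ in \eqref{abstract operator}: the map is real-analytic on $\mathscr{O}$, and $\F(u^0,0)=0$. Indeed, at $\rho=0$ one has $\mathcal{Z}_k^0 = \mathcal{C}$ and $\Vrho_k^0 = \mathcal{V}_k(\fullparam)$, so that $\mathscr{A}_k(u^0,0) = \realpart(\tau\,\mathcal{V}_k(\fullparam_0))$ and the leading Bernoulli term in \eqref{linear Bernoulli operator} both vanish because $\mathcal{V}_k(\fullparam_0)=0$. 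The entire weight of the argument therefore rests on showing that the linearization $\mathscr{L} \colonequals D_u\F(u^0,0)$ is an isomorphism $\Xspace \to \Yspace$; the theorem and its leading-order asymptotics then follow by differentiating the relation $\F(u^\rho,\rho)=0$.

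To analyze $\mathscr{L}$, I would first record its block structure from the $\rho=0$ linearizations: the kinematic part is $\realpart(\tau(\mathcal{C}\dot\nu_k' + D_\lambda\mathcal{V}_k(\fullparam_0)\dot\lambda))$, while the Bernoulli part is read off from \eqref{linear Bernoulli operator}. The operator triangularizes across Fourier modes once one exploits the spectral properties of $\mathcal{C}$ recalled after \eqref{definition C operator} together with the bijectivity of $\realpart(\tau\,\mathcal{C}\partial_\tau)$ on mean-zero Hölder spaces already used in the proof of Theorem~\ref{intro rigidity corollary}. Concretely: the kinematic equation inverts to determine $\dot\nu_k$ from $\dot\mu_k$ and the constants $D_\lambda\mathcal{V}_k(\fullparam_0)\dot\lambda$; the constant mode of the Bernoulli equation determines $\dot Q_k$; the modes of index $\geq 2$ of the Bernoulli equation invert, again via $\mathcal{C}$, to determine the corresponding modes of $\dot\mu_k$; and the single remaining mode-one balance collapses, after substituting the kinematic relation, to precisely the finite-dimensional map $\dot\lambda \mapsto D_\lambda\mathcal{V}(\fullparam_0)\dot\lambda$. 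On this reduced system $\mathscr{L}$ thus inherits invertibility directly from the non-degeneracy hypothesis of Definition~\ref{definition non-degeneracy}.

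The one genuinely delicate point, and the step I expect to be the main obstacle, is that the first Fourier mode of each density $\mu_k$ does not enter $\mathscr{L}$ at $\rho=0$ at all: it represents the gauge freedom of translating the $k$-th vortex core, which is redundant with the center parameter $\zeta_k$. Left unaddressed this produces a spurious $2M$-dimensional kernel, so $\mathscr{L}$ is only Fredholm of positive index on the naive space. The fix is to pin this gauge through a normalization of the conformal map, equivalently to project out $\proj_1\mu_k$, which removes exactly these $2M$ dimensions from the domain and renders $\mathscr{L}$ an isomorphism whose invertibility is equivalent to that of $D_\lambda\mathcal{V}(\fullparam_0)$. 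Verifying that this normalization is compatible with the layer-potential ansatz \eqref{f ansatz}, and that the kernel and cokernel of the normalized $\mathscr{L}$ match those of the point-vortex Jacobian, is the crux of the argument; it is here that one must confirm nothing beyond the non-degeneracy of $\fullparam_0$ is required.

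Finally, with $\mathscr{L}$ invertible the implicit function theorem yields the real-analytic curve $\rho \mapsto u^\rho$ with $u^0 = (0,\lambda_0)$ and the expansion $u^\rho = u^0 - \rho\,\mathscr{L}^{-1} D_\rho\F(u^0,0) + O(\rho^2)$. To extract \eqref{leading order hollow vortex} I would compute $D_\rho\F(u^0,0)$ by differentiating the explicit $\rho$-dependence in \eqref{abstract operator}: at this order the only contributions come from the first-order terms of $\Vrho_k^\rho$ in \eqref{definition Lambda}, whose mode-one coefficient is $-2 S_k \tau$, and from the expansion of $\mathcal{Z}_k^\rho$. Inverting $\mathscr{L}$ against this forcing, using the mode-by-mode description above, produces the stated leading profiles for $\mu_k^\rho, \nu_k^\rho, Q^\rho$, and $\lambda^\rho$, with the straining field $S_k$ appearing exactly as the source. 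Univalence of the associated $f$ for small $\rho$ is then immediate, since $f$ is a near-identity perturbation on each boundary component.
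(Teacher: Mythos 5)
Your overall route is the same as the paper's: apply the analytic implicit function theorem at $(u^0,0)$, show that $D_u\F(u^0,0)$ is an isomorphism by triangularizing across Fourier modes and reducing the mode-one balance to the point-vortex Jacobian $D_\lambda\mathcal{V}(\fullparam_0)$, then read off the asymptotics from $-D_u\F(u^0,0)^{-1}D_\rho\F(u^0,0)$. (The paper outsources the Fredholm/dimension-count step to the analogous computation in \cite{chen2023desingularization}, whereas you sketch it directly; that is a legitimate difference of presentation, not of substance.) However, the paragraph you single out as ``the crux'' contains a genuine error. There is no spurious $2M$-dimensional kernel coming from $\proj_1\mu_k$, and the normalization you propose is not only unnecessary but would break the argument. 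The point is that $\dot\mu_k$ enters the linearized Bernoulli operator through $\realpart(\mathcal{C}\dot\mu_k^\prime)$, and the operator $\mathcal{C}\partial_\tau$ shifts Fourier modes up by one: writing $\dot\mu_k = 2\realpart\sum_{m\geq 1}\hat\mu_{k,m}\tau^m$, one computes $\mathcal{C}\dot\mu_k^\prime = \sum_{m \geq 1} m\,\overline{\hat\mu_{k,m}}\,\tau^{-m-1}$, so the first mode of $\mu_k$ is seen by the Bernoulli equation in its \emph{second} output mode, and the map $\dot\mu_k \mapsto \realpart(\mathcal{C}\dot\mu_k^\prime)$ is a bijection onto the modes $\geq 2$ of the (full, non-mean-zero) codomain $C^{\ell-1+\alpha}(\mathbb{T})$. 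Mode $0$ of the Bernoulli equation is matched by $\dot Q_k$ and mode $1$, together with mode $1$ of the kinematic equation, collapses onto $D_\lambda\mathcal{V}(\fullparam_0)$ exactly as you describe. Nothing is left over.

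The gauge freedom you are worried about --- translating the $k$-th core --- corresponds to adding a \emph{constant} to $\mathcal{Z}_k^\rho\mu$, i.e.\ to the zero mode, not the first mode (a $\tau^{-1}=\bar\tau$ term deforms the circle into an ellipse rather than translating it). That freedom is already pinned by the choice of mean-zero densities $\mu_k\in\mathring{C}^{\ell+\alpha}(\mathbb{T})$ (and by the fact that $\mathcal{C}$ annihilates constants), with translations of the core carried instead by the parameters $\zeta_k$ inside $\Lambda$. If you were to additionally project out $\proj_1\mu_k$ as you suggest, the mode-$2$ component of the Bernoulli equation would become unreachable, producing a $2M$-dimensional cokernel and destroying surjectivity. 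So the fix you flag as the main obstacle should simply be deleted; the functional-analytic setting of the paper already handles this. Two smaller points: the first-order coefficient of $\Vrho_k^\rho$ is $+2S_k\tau$, not $-2S_k\tau$ (cf.\ $\A_{k\rho}^0 = 2\realpart(S_k\tau^2)$ in the paper), and when you say the Bernoulli modes $\geq 2$ determine ``the corresponding modes'' of $\dot\mu_k$, they in fact determine all of $\dot\mu_k$ because of the index shift.
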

\begin{remark}
Comparing the asymptotic formulas~\eqref{leading order hollow vortex} to the corresponding ones obtained for rotating, translating, or stationary configurations in~\cite[Theorem 6.1]{chen2023desingularization}, we see that the only change is that $Q^\rho$ here is $O(\rho)$ rather than $O(\rho^2)$. 
  Recall that here have included the inhomogeneous $|\cOmega|^2 |f|^2$ term in the dynamic condition~\eqref{dynamic condition} so that the pressure is (spatially) constant in the lab frame, while this term was absent in \cite{chen2023desingularization}.
\end{remark}

\begin{proof}
The Fréchet derivative of $\F$ at the fixed collapsing point vortex configuration is given by
\[
	 D_u \F(u^0,0) \dot u = 
				\begingroup
				\renewcommand\arraystretch{1.5}
				\begin{pmatrix} 
					 0  & \A_\nu^0 & 0 &  \A_\param^0 \\
					 \B_\mu^0 & \B_\nu^0 & \B_Q^0 &  \B_\param^0  \\
				\end{pmatrix} \endgroup
				\begin{pmatrix} \dot \mu \\ \dot \nu \\ \dot Q \\ \dot \param \end{pmatrix},
\]
where we are abbreviating $\A_\nu^0 \colonequals  D_\nu \A(u^0,0)$ and so on.  It is likewise quickly verified that $\A_\nu^0$ and $\B_w^0$ have have block diagonal form 
\begin{equation}
\label{DA DB formulas}
\begin{aligned}
 	\A_{k\nu}^0 \dot\nu & = \realpart{\left( \tau \mathcal{C} \dot\nu_k^\prime \right)}, \\ 
	\A_{k\rho}^0 & = \realpart{ \left(  \tau  \Vrho_{k\rho}^0\right)} = 2\realpart{(S_k \tau^2)}, \\
  	\B_{k(\mu,\nu,Q)}^0\begin{pmatrix} \dot \mu \\  \dot\nu \\ \dot Q\end{pmatrix} & = \frac{(\gamma_k^0)^2}{2 \pi^2} \realpart{ \left(  \frac{2\pi i}{\gamma_k^0} \tau \mathcal{C} \dot\nu_k^\prime - \mathcal{C}\dot\mu_k^\prime \right)} -\dot Q_k,
\end{aligned}
\qquad \textrm{for } k = 1, \ldots, M.
\end{equation}
These agree exactly with the linearized steady hollow vortex system computed in~\cite[Section 6]{chen2023desingularization}; the collapsing case differs only in the form of $\mathcal{V}$ and hence $\A_{k\lambda}^0$ and $\B_{k\lambda}^0$.  Thanks to this similarity, arguing directly as in the proof of~\cite[Theorem 6.1]{chen2023desingularization}, we conclude that 
 $D_u \F(u^0,0) \colon \Xspace \to \Yspace$ is Fredholm with
\begin{equation}
\label{Fu dimension counts}
  \begin{aligned}
    \dim\kernel{D_u \F(u^0,0)} & = \dim\kernel{D_{\param} \mathcal{V}(\fullparam_0)} \\ 
    \cod\range{D_u\F (u^0, 0)} &= \cod\range{D_{\param} \mathcal{V}(\fullparam_0)}. 
  \end{aligned}
\end{equation}  
By hypothesis, $\Lambda_0$ is a non-degenerate point vortex configuration, and hence $D_\param \mathcal{V}(\fullparam_0)$ is an isomorphism.  Then, \eqref{Fu dimension counts} ensures the same is true of $D_u\F(u^0,0)$ as a mapping $\Xspace \to \Yspace$, so that the existence of the curve $\km$ follows from the (real-anayltic) implicit function theorem.  

It remains only to verify that the solutions along it have leading-order form stated in~\eqref{leading order hollow vortex}.  Writing $u^\rho \equalscolon  \rho \dot u + O(\rho^2)$, we see that
\[
	\A_u^0 \dot u = -\A_\rho^0, \qquad \B_u^0 \dot u = -\B_\rho^0.
\] 
The formulas for $\A_w^0$ and $\B_w^0$ are already given in \eqref{DA DB formulas}.  For the remaining derivatives, we compute that
\begin{align*}
	 \A_{k\lambda}^0 &= \realpart{ \left( \tau  \mathcal{V}_{k\param} \right)} \\
	 \A_{k\rho}^0 & = \realpart{ \left(  \tau  \Vrho_{k\rho}^0\right)} = -\realpart{\left( \frac{1}{2\pi i} \tau^2 \sum_{j \neq k} \frac{\gamma_j^{0}}{(\zeta_j^0 - \zeta_k^0)^2}\right) } = 2\realpart{(S_k \tau^2)}.
\end{align*}
Then, from
  \begin{equation}
    \label{asymptotic kinematic}
    \A_{k\nu}^0 \dot\nu + \A_{k \param}^0 \dot \param = - \A_{k\rho}^0,
  \end{equation}
we see that
\[
	0 = \proj_1 \A_{k\param}^0 \dot\param = \frac{\tau}{2} \Vrho_{k\lambda} \dot\lambda+  \frac{1}{2\tau} \overline{\Vrho_{k\lambda} \dot\lambda}.
\]
But, recall that by construction $\Vrho_\lambda$ has a trivial kernel, and thus $\dot\lambda = 0$. Applying $\proj_{>2}$ to \eqref{asymptotic kinematic} yields 
\[
	\proj_{> 2}  \A_{k\nu}^0 \dot\nu = 0,
\]
and hence $\proj_{>2} \dot\nu = 0$.  Moreover, inverting $\realpart{(\tau \mathcal{C} \partial_\tau \placeholder)}$ gives
\begin{align*}
	 \dot\nu_k &=   -\left( S_k \tau^2 + \overline{S_k} \frac{1}{\tau^2} \right) 
	 =-2 \realpart{\left(  S_k \tau^2 \right)}.
\end{align*}

Turning to the dynamic condition, we see that
  \begin{equation}
    \label{asymptotics bernoulli}
    -\frac{(\gamma_k^0)^2}{2 \pi^2} \realpart{ \mathcal{C} \dot\mu_k^\prime} + \frac{\gamma_k^0}{\pi} \realpart{(i \tau \mathcal{C} \dot\nu_k^\prime)} - \dot Q_k + \B_{k\param}^0 \dot\param = -\B_{k\rho}^0.
  \end{equation}
In view of~\eqref{definition Bernoulli operator}, expanding the operator $\mathcal{B}_k$ gives
\begin{align*}
	\rho \mathcal{B}_k^\rho(0,0,\lambda) & = \left| \frac{\gamma_k}{2\pi i \tau} + \rho \Vrho_k^\rho\right|^2 - \frac{\gamma_k^2}{4 \pi^2} - \rho^2 |\cOmega| |\zeta_k + \rho \tau| \\
		& = \rho^2 |\Vrho_k^\rho|^2 + 2 \rho \realpart{\left( \overline{\frac{\gamma_k}{2\pi i \tau}} \Vrho_k^\rho \right)} - \rho^2 |\cOmega^0|^2 |\zeta_k^0| +O(\rho^3),
\end{align*}
and thus
\begin{align*}
	\B_{k\rho}^0 &=  \frac{\gamma_k^0}{\pi } \realpart{\left(  i\tau \Vrho_{k\rho}^0 \right)}  - |\cOmega^0|^2 |\zeta_k^0| = \frac{2\gamma_k^0}{\pi} \realpart{ (i S_k \tau^2  )} - |\cOmega^0|^2 |\zeta_k^0|.
\end{align*}
In particular, applying the projection $\proj_0$ to \eqref{asymptotics bernoulli} reveals that $\dot Q_k = |\cOmega^0|^2 |\zeta_k^0|$.  Using the previous expression for $\dot\nu$, we then find that
\begin{align*}
	\frac{\gamma_k^0}{\pi} \realpart{(i \tau \mathcal{C} \dot\nu_k^\prime)} &=  \frac{2\gamma_k^0}{\pi} \realpart{\left( i S_k \tau^2 \right)}.
\end{align*}
Applying $\proj_2$ and $1-\proj_2$ to \eqref{asymptotics bernoulli}, we therefore obtain
\[
	\left\{
	\begin{aligned}
		(1-\proj_2) \realpart{ \mathcal{C} \dot\mu_k^\prime} & =  0 \\
		\proj_2 \realpart{ \mathcal{C} \dot\mu_k^\prime} & = \proj_2 \left(  \frac{2\pi^2}{(\gamma_k^0)^2} \B_{k\rho}^0 + \frac{2 \pi}{\gamma_k^0} \realpart{\left( i  \tau \mathcal{C}\dot\nu_k^\prime\right)} \right) = \frac{ 8 \pi}{\gamma_k^0} \realpart{ \left(  i S_k \tau^2 \right)}.
	\end{aligned}
	\right.
\]
Inverting once more the Fourier multiplier $\realpart{\mathcal{C} \partial_\tau}$, we find at last that
\begin{align*}
	\dot\mu_k & = \frac{16 \pi}{\gamma_k^0} \realpart{ \left(  i S_k \tau \right)},
\end{align*}
which completes the proof.
\end{proof}

As an immediate consequence of Theorem~\ref{imploding hollow vortex configuration theorem} and the computed Jacobians in Examples~\ref{collapsing triple example} and \ref{collapsing quadruple example}, we then have the following.

\begin{corollary}[Imploding trios and quartets] 
\label{triple quadruple corollary}
Fix $\ell \geq 1$ and $\alpha \in (0,\alpha)$.
\begin{enumerate}[label=\rm(\alph*\rm)] 
	\item \textup{(Trio)} There exists a curve $\km_3$ of imploding hollow vortices trios bifurcating from the collapsing point vortex trio~\eqref{point vortex trio}.
	\item \textup{(Quartet)}  There exists a curve $\km_4$ of imploding hollow vortices quartets bifurcating from the collapsing point vortex quartet~\eqref{point vortex quartet}. 
\end{enumerate}
\end{corollary}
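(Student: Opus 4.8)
The plan is to read the corollary as a direct application of the desingularization result Theorem~\ref{imploding hollow vortex configuration theorem}. That theorem takes as its sole hypothesis a non-degenerate collapsing point vortex configuration $\fullparam_0$, in the sense of Definition~\ref{definition non-degeneracy}, and returns a real-analytic curve $\km$ of imploding hollow vortex configurations bifurcating from it, together with the leading-order asymptotics \eqref{leading order hollow vortex}. The whole task therefore reduces to exhibiting, for each of the trio and the quartet, a configuration $\fullparam_0$ that solves $\mathcal{V}(\fullparam_0) = 0$ and at which $D_\fullparam \mathcal{V}(\fullparam_0)$ has full rank.

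For part (a) I would take $\fullparam_0$ to be the data \eqref{point vortex trio}. By the construction in Example~\ref{collapsing triple example}, this arises from the explicit parameterization of collapsing triples with $\gamma_1 = 1$, $\gamma_2 = 2$, $d = 3$, $\theta = \pi/2$, after the shift $z_k \mapsto z_k - \zc$ that places the collapse point at the origin; hence $\mathcal{V}(\fullparam_0) = 0$ by design. The non-degeneracy is the only point requiring verification, and it is recorded in Example~\ref{collapsing triple example}: the $6 \times 6$ real Jacobian $D_\fullparam \mathcal{V}(\fullparam_0)$ has full rank, equivalently $D_\param \mathcal{V}(\fullparam_0)$ is invertible for the coordinate block $\param = (z_1, z_2, \gamma_1, \kappa)$. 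Invoking Theorem~\ref{imploding hollow vortex configuration theorem} then yields the curve $\km_3$.

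Part (b) is handled in exactly the same way using the quartet \eqref{point vortex quartet} from \cite{kallyadan2022selfsimilar}, which is already normalized so that $\zc = 0$ and solves $\mathcal{V}(\fullparam_0) = 0$. Example~\ref{collapsing quadruple example} certifies non-degeneracy by computing $\det D_\param \mathcal{V}(\fullparam_0) \approx 1.397 \neq 0$ for the block $\param = (z_1, z_2, z_3, \gamma_1, \Omega)$, and a further application of Theorem~\ref{imploding hollow vortex configuration theorem} produces $\km_4$.

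The only non-formal ingredient is the non-degeneracy check, which is computational rather than analytic. Since the entries of $D_\fullparam \mathcal{V}$ are rational in the centers $z_k$ and circulations $\gamma_k$ (through the Biot--Savart kernel $1/(z_k - z_j)$ and its conjugate), and the data in \eqref{point vortex trio} and \eqref{point vortex quartet} are algebraic, the rank and determinant computations can be carried out symbolically, making the conclusion rigorous rather than merely numerical. I expect this step to be the main obstacle only in a bookkeeping sense; conceptually everything follows at once from the already-proved desingularization theorem.
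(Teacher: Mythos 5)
Your proposal is correct and is exactly the paper's argument: the corollary is stated as an immediate consequence of Theorem~\ref{imploding hollow vortex configuration theorem} applied to the configurations \eqref{point vortex trio} and \eqref{point vortex quartet}, whose non-degeneracy is verified (via computer algebra) in Examples~\ref{collapsing triple example} and~\ref{collapsing quadruple example}. Your added remark that the Jacobian computations can be done symbolically over algebraic data is a fair observation consistent with the paper's use of a computer algebra system.
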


\appendix
\section{Comparison of asymptotics for rotating vortices}
\label{asymptotics appendix}

In this Appendix, we record the higher-order terms in the expansion of the rotating hollow vortex solutions along the curves $\cm_{\rot}^{m,\pm}$ constructed in Theorem~\ref{m-fold rotating theorem}. As these solutions are obtained via the analytic Crandall--Rabinowitz theorem, the asymptotics can be found recursively through an elementary but lengthy calculation. 
Recall that there are two branches of the dispersion relation,
\begin{align}
  \label{dispersion again}
  \Omega_{0,\pm} = \frac {\gamma}{2 \pi} 
  \frac{\sqrt m \pm 1}{\sqrt m},
\end{align}
and $\cm_{\rot}^{m,\pm}$ bifurcates from the trivial rotating circular solution at $\Omega = \Omega_{0,\pm}$. 
Third-order asymptotic expansions for these branches of
solutions are given by
\begin{equation}
\label{higher-order asymptotics}
  \begin{aligned}
    f_\pm^\varepsilon(\zeta) &= \zeta
    + \varepsilon \zeta^{1-m}
    - \varepsilon^2 (\sqrt m \pm 1)^2 \zeta^{1-2m}
    + \tfrac 32 \varepsilon^3 (\sqrt m \pm 1)^4 \zeta^{1-3m}
    + O(\varepsilon^4), \\
    \Big(\frac \gamma{2\pi i}\Big)^{-1} w_\pm^\varepsilon(\zeta) &= \log \zeta
    + \varepsilon \frac{\sqrt m \pm 1}{\sqrt m} \zeta^{-m}
    - \varepsilon^2 \frac{(\sqrt m \pm 1)^3}{\sqrt m} \zeta^{-2m}
    \\ &\qquad\qquad 
    + \varepsilon^3 \left( 
      \frac{3(\sqrt m \pm 1)^5}{2\sqrt m} \zeta^{-3m} 
      \pm \frac{(\sqrt m \pm 1)^4}{2\sqrt m} \zeta^{-m} 
      \right)
      + O(\varepsilon^4), \\
    \Big(\frac \gamma{2\pi}\Big)^{-1} \Omega_\pm^\varepsilon &= 
    \frac{\sqrt m \pm 1}{\sqrt m} 
    \pm \varepsilon^2 \frac{(\sqrt m \pm 1)^3(\sqrt m \pm 3)}{2\sqrt
    m} + O(\varepsilon^4),\\
    \Big(\frac \gamma{2\pi}\Big)^{-2} q_\pm^\varepsilon &= 
    - \frac{\sqrt m \pm 2}{2\sqrt m} 
    \mp \varepsilon^2 \frac{(\sqrt m \pm 1)^2
    (m \pm 3\sqrt m + 3)}{2\sqrt m} 
    + O(\varepsilon^4).
  \end{aligned}
\end{equation}
Here $f_\pm^\varepsilon$ is the conformal mapping corresponding to $\mu_\pm^\varepsilon$ while $w_\pm^\varepsilon$ is the holomorphic function determined by $\nu_\pm^\varepsilon$ via the ansatz~\eqref{m-fold trace ansatz}.

Performing a similar analysis for the H-state problem considered by Crowdy, Nelson, and Krishnamurthy~\cite{crowdy2021hstates}, which recall amounts to omitting the
$\Omega^2 |f|^2$ term in the dynamic boundary condition~\eqref{dynamic condition}, we
instead find
\begin{align*}
  f_\hstate(\zeta) &= \zeta
    + \varepsilon \zeta^{1-m}
    - \varepsilon^2 \frac{(m-1)^2}{4m} \zeta^{1-2m}
    + \varepsilon^3 \frac{(m-1)^4}{16m^2} \zeta^{1-3m}
    + O(\varepsilon^4), \\
  \Big(\frac \gamma{2\pi i}\Big)^{-1} w_\hstate(\zeta) &= \log \zeta
    + \varepsilon \frac{m-1}{m+1} \zeta^{-m}
    - \varepsilon^2 \frac{(m-1)^3}{4m(m+1)} \zeta^{-2m}
    + \varepsilon^3 \frac{(m-1)^5}{16m^2(m+1)} \zeta^{-3m} 
    + O(\varepsilon^4), \\
  \Big(\frac \gamma{2\pi}\Big)^{-1} \Omega_\hstate &= 
    \frac{m-1}{m+1} + \varepsilon^2 \frac{(m-1)^2}{4m(m+1)}
    + O(\varepsilon^4),\\
  \Big(\frac \gamma{2\pi}\Big)^{-2} q_\hstate &= 
    \frac 2{(m+1)^2}
    + \varepsilon^2 \frac{(m-1)^2}{2m(m+1)}
  + O(\varepsilon^4).
\end{align*}
This expansion for the conformal mapping agrees with the exact
formula
\begin{align*}
  f_\hstate(\zeta) = \zeta + \frac{\varepsilon \zeta}{\zeta^m - \tfrac 14
  \varepsilon m^{-1}(m-1)^2}
\end{align*}
which can be obtained by suitably transforming the solution in~\cite{crowdy2021hstates}.

Finally, it is interesting to compare the solutions along $\cm_{\rot}^{m,\pm}$ to the well-known \emph{V-states}, which are $m$-fold symmetric rotating vortex patches.  Su~\cite{su1979motion} provides a higher-order expansions for V-states in terms of  a polar representation $r=R_\vstate(\theta)$ of the patch boundary.
In dimensionless units where the radius of the unperturbed patch is
$1$, and letting $\omega$ denote the constant vorticity in the patch, these are
\begin{align*}
  R_\vstate(\theta) &= 1+ \varepsilon \cos (m\theta) + \varepsilon^2
  \frac{2m-1}4 \cos(2m\theta) + \varepsilon^3 \frac{(3m-1)(m-1)}8 \cos(2m\theta) + O(\varepsilon^4), \\
  \omega^{-1} \Omega_\vstate &= \frac{m-1}{2m} - \varepsilon^2 \frac{m-1}4 +
  O(\varepsilon^4).
\end{align*}

One important way in which our solution families differ from the H-states and V-states are in terms of the streamline pattern. Recall that 
critical layers are curves along which the angular component of
the relative fluid velocity 
\begin{align*}
  -\frac{\boldsymbol \xi^\perp}{|\boldsymbol \xi|} \cdot \mathbf U
  = 
  \Omega |f| + \imagpart \frac{fw_\zeta}{|f| f_\zeta} 
\end{align*}
vanishes. These will be present in the
perturbative solutions only if they are present at the bifurcation
point. Using the dispersion relation \eqref{dispersion again}, we see that the bifurcation points for the $\cm_{\rot}^{m,+}$ branches have no critical layers, while those for the $\cm_{\rot}^{m,-}$ branches have critical layer at the radius
\begin{align*}
  |\boldsymbol \xi|^2 = \frac{\sqrt m}{\sqrt m-1}. 
\end{align*}
For comparison, the perturbative H-states have critical layers at the
radius
\begin{align*}
  |\boldsymbol \xi|^2 = \frac{m+1}{m-1},
\end{align*}
while the rotating vortex patches have critical layers at the radius
\begin{align*}
  |\boldsymbol \xi|^2 = \frac m{m-1}.
\end{align*}
In all cases, these critical layers approach the unperturbed interface
$|z|=1$ as the symmetry class $m$ becomes large. See~\cite{park2022quantitative} for more on this limit in the V-state case.

Another distinctive feature of the $\cm_{\rot}^{m,-}$ branches is that the nature of the local bifurcation changes depending on $m$. From~\eqref{higher-order asymptotics}, we see that the bifurcation diagram for both $\Omega_+^\varepsilon$ and $\Omega_-^\varepsilon$ is a pitchfork. For all 
$\cm_{\rot}^{m,+}$ branches with $m \geq 2$, and for all $\cm_{\rot}^{m,-}$ branches with $2 \leq m \leq 8$, that pitchfork is supercritical. However, for $m \geq 10$, the pitchforks for $\cm_{\rot}^{m,-}$ become subcritical:
\begin{align*}
  \Omega_-^\varepsilon 
  \begin{cases}
    > \Omega_{0,-} & m \le 8,\\
    < \Omega_{0,-} & m \ge 10
  \end{cases} 
  	\qquad \textrm{for all } 0 < \varepsilon \ll 1.
\end{align*}
The case $m=9$ is special, as we can see from~\eqref{higher-order asymptotics} that $\Omega_-^\varepsilon - \Omega_{0,-} = O(\varepsilon^4)$, meaning an even higher-order expansion must be performed to discern the direction. This is in sharp contrast to both H-states, which are always supercritical pitchforks, and V-states, which are always subcritical pitchforks.

\section*{Acknowledgments} 

The research of RMC is supported in part by the NSF through DMS-2205910. The research of SW is supported in part by the NSF through DMS-2306243, and the Simons Foundation through award 960210.

\section*{Data availability} 
There is no data associated to this manuscript.

\section*{Conflict of interest} 

The authors confirm that they have no conflict of interest regarding this work.

\bibliographystyle{siam}
\bibliography{projectdescription}

\end{document}